\DeclareSymbolFont{AMSb}{U}{msb}{m}{n}
\DeclareMathSymbol{\N}{\mathbin}{AMSb}{"4E}
\DeclareMathSymbol{\Z}{\mathbin}{AMSb}{"5A}
\DeclareMathSymbol{\R}{\mathbin}{AMSb}{"52}
\DeclareMathSymbol{\Q}{\mathbin}{AMSb}{"51}
\DeclareMathSymbol{\C}{\mathbin}{AMSb}{"43}
\newcommand{\elltwo}{L^2_{[-1,1]}}
\newcommand{\DX}[1]{\, {\rm d}#1}
\newcommand{\ii}{\textrm{i}}
\newcommand{\BIGO}{{\mathcal O}}
\newcommand{\CPT}{\ensuremath{\cos \frac{\pi}{T}}}
\newcommand{\CPTX}{\ensuremath{\cos \frac{\pi}{T} x}}
\newcommand{\SPTX}{\ensuremath{\sin \frac{\pi}{T} x}}
\newcommand{\CKPTX}{\ensuremath{\cos \frac{\pi}{T} k x}}
\newcommand{\SKPTX}{\ensuremath{\sin \frac{\pi}{T} k x}}
\newcommand{\CHW}{\frac{1}{\sqrt{1-y^2}}} 
\newcommand{\CHWS}{\sqrt{1-y^2}} 
\newtheorem{lemma}{Lemma}[section]
\newtheorem{theorem}[lemma]{Theorem}
\newtheorem{problem}[lemma]{Problem}
\title{On the resolution power of Fourier extensions \\for oscillatory functions}
\author{Ben Adcock\footnote{Department of Mathematics, Simon Fraser University, Canada (ben\_adcock@sfu.ca)}~ and Daan Huybrechs\footnote{Department of Computer Science, K.U.Leuven, Belgium (daan.huybrechs@cs.kuleuven.be)}}
\date{}
\begin{document}

\maketitle

\begin{abstract}
Functions that are smooth but non-periodic on a certain interval possess Fourier series that lack uniform convergence and suffer from the Gibbs phenomenon. However, they can be represented accurately by a Fourier series that is periodic on a larger interval. This is commonly called a Fourier extension.  When constructed in a particular manner, Fourier extensions share many of the same features of a standard Fourier series.  In particular, one can compute Fourier extensions which converge spectrally fast whenever the function is smooth, and exponentially fast if the function is analytic, much the same as the Fourier series of a smooth/analytic and periodic function.

With this in mind, the purpose of this paper is to describe, analyze and explain the observation that Fourier extensions, much like classical Fourier series, also have excellent resolution properties for representing oscillatory functions. The \emph{resolution power}, or required number of degrees of freedom per wavelength, depends on a user-controlled parameter and, as we show, it varies between $2$ and $\pi$. The former value is optimal and is achieved by classical Fourier series for periodic functions, for example. The latter value is the resolution power of algebraic polynomial approximations.  Thus, Fourier extensions with an appropriate choice of parameter are eminently suitable for problems with moderate to high degrees of oscillation.
\end{abstract}

\section{Introduction}\label{s:introduction}
In many physical problems, one encounters the phenomenon of oscillation.  When approximating the solution to such a problem with a given numerical method, this naturally leads to the question of resolution power.  That is, how many degrees of freedom are required in a given scheme to resolve such oscillations?  Whilst it may be impossible to answer this question in general, important heuristic information about a given approximation scheme can be gained by restricting ones interest to certain simple classes of oscillatory functions (e.g. complex exponentials for problems in bounded intervals).  

Resolution power represents an \textit{a priori} measure of the efficiency of a numerical scheme for a particular class of problems.  Schemes with low resolution power require more degrees of freedom, and hence increased computational cost, before the onset of convergence.  Conversely, schemes with high resolution power capture oscillations with fewer degrees of freedom, resulting in decreased computational expense.

Consider the case of the unit interval $[-1,1]$ (the primary subject of this paper).  Here one typically studies the question of resolution via the complex exponentials
\begin{equation} \label{E:fmodel}
f(x) = \exp (i \omega \pi x),\quad \omega \in \R.
\end{equation}
To this end, let $\phi_{n}(f)$, $n=1,2,\ldots$ be a sequence of approximations of the function $f(x) = \exp (i \pi \omega x)$ which converges to $f$ as $n \rightarrow \infty$ (here $n$ is the number of degrees of freedom in the approximation $\phi_n(f)$).  For $0<\epsilon < 1$, let $n(\epsilon,\omega)$ be the minimal $n$ such that
\begin{equation*}
\| f - \phi_n(f) \|_{L^2_{[-1,1]}} < \epsilon.
\end{equation*}
We now define the \emph{resolution constant} $r$ of the approximation scheme $\{ \phi_n \}$ as
\begin{equation*}
r = \limsup_{\epsilon \rightarrow 1^{-}} \lim_{\omega \rightarrow \infty} \frac{n(\epsilon,\omega)}{\omega}.
\end{equation*}
Note that $r$ need not be well defined for an arbitrary scheme $\{ \phi_n \}$ (for example, if $n(\epsilon,\omega)$ were to scale superlinearly in $\omega$).  However, for all schemes encountered in this paper, this will be the case. 

Loosely speaking, the resolution constant $r$ corresponds to the required number of degrees of freedom per wavelength to capture oscillatory behaviour; a common concept in the literature on oscillatory problems \cite{boyd,naspec}. In particular, we say that a given scheme has high (respectively low) resolution power if it has small (large) resolution constant.  It is also worth noting that, in many schemes of interest, the approximation $\phi_n(f)$ is based on a collocation at a particular set of $n$ nodes in $[-1,1]$.  In this circumstance, the resolution constant $r$ is equivalent to the number of \emph{points per wavelength} required to resolve an oscillatory wave (for further details, see \S \ref{ss:relation}).

With little doubt, the approximation of a smooth, periodic function via its truncated Fourier series is one of the most effective numerical methods known.  Fourier series, when computed via the FFT, lead to highly efficient, stable methods for the numerical solution of a large range of problems (in particular, PDE's with periodic boundary conditions).  A simple argument leads to a resolution constant of $r=2$ in this case (for periodic oscillations), with exponential convergence occurring once the number of Fourier coefficients exceeds $2\omega$.

However, the situation is altered completely once periodicity is lost.  The slow pointwise convergence of the Fourier series of a nonperiodic function, as well as the presence of $\BIGO(1)$ Gibbs oscillations near the domain boundaries, means that nonperiodic oscillations cannot be resolved by such an approximation.  A standard alternative is to approximate with a sequence of orthogonal polynomials (e.g. Chebyshev polynomials).  Such approximations possess exponential convergence, without periodicity, yet the resulting resolution constant increases to the value $r=\pi$, making such an approach clearly less than ideal.

\subsection{Fourier extensions}
The purpose of this paper is to discuss an alternative to polynomial approximation for nonperiodic functions; the so-called \textit{Fourier extension}.  Our main result is to show that Fourier extensions have excellent resolution properties.  In particular, there is a user-determined parameter $T \in (1,\infty)$ that allows for continuous variation of the resolution constant from the value $2$ (in the limit $T \rightarrow 1$), the figure corresponding to Fourier series, to $\pi$ ($T \rightarrow \infty$), the value obtained by polynomial approximations.  Thus, Fourier extensions are highly suitable tools for problems with oscillations at moderate to high frequencies.

Let us now describe Fourier extensions in more detail.  As discussed, Fourier series are eminently suitable for approximating smooth and periodic oscillatory functions.  It follows that a potential means to recover a highly accurate approximation of a nonperiodic function $f : [-1,1] \rightarrow \R$ is to seek to extend $f$ to a periodic function $g$ defined on a larger domain $[-T,T]$ and compute the Fourier series of $g$.  Unfortunately, unless $f$ is itself periodic, no periodic extension $g$ will be analytic, and hence only spectral convergence can be expected.  Preferably, for analytic $f$, we seek an approximation that converges exponentially fast.

To remedy this situation, rather than computing an extension $g$ explicitly, it was proposed in \cite{BoydFourCont,brunoFEP} to directly compute a Fourier representation of $f$ on the domain $[-T,T]$ via the so-called  \textit{Fourier extension problem}:

\begin{problem}\label{p:FEP}
Let $G_n$ be the space of $2T$-periodic functions of the form
\begin{equation}\label{E:g_form}
 g \in G_n: g(x) = \frac{\alpha_0}{2} + \sum_{k=1}^n \alpha_k \CKPTX + \beta_k \SKPTX.
\end{equation}
The \emph{(continuous) Fourier extension} of $f$ to the interval $[-T,T]$ is the solution to the optimization problem
\begin{equation}\label{E:ls}
g_n :=  \underset{g \in G_n}{\operatorname{argmin}}
\Vert f-g \Vert_{\elltwo}.
\end{equation}
\end{problem}

Note that there are infinitely many ways in which to define a smooth and periodic extension of a function $f : [-1,1] \rightarrow \mathbb{C}$, of which~\eqref{E:ls} is but one.  We say an extension $g$ is a \emph{Fourier} extension if $g \in G_n$ is a finite Fourier series on $[-T,T]$.  Note, however, that even within this stipulation, there are still infinitely many ways to define $g$.  We refer to the extension $g_n$ defined by~\eqref{E:ls} as the \textit{continuous} Fourier extension of $f$ (in the sense that it minimizes the continuous $L^2$-norm). 

As numerically observed in \cite{BoydFourCont,brunoFEP}, the convergence of $g_{n}$ to $f$ is exponential, provided $f$ is analytic.  When $T=2$, this was confirmed by the analysis presented in \cite{huybrechs2010fourier}. A pivotal role is played by the map
\begin{equation}\label{E:map_T2}
y = \cos \frac{\pi}{2} x.
\end{equation}
The importance of this map is that it transforms the trigonometric basis functions that span the space $G_n$ into polynomials in $y$. In this setting, Problem~\ref{p:FEP} reduces simply to the computation of expansions in a suitable basis of nonclassical orthogonal polynomials, since the least squares criterion corresponds exactly to an orthogonal projection in a particular weighted norm. Well-known results on orthogonal polynomials can then be used to establish convergence properties of the Fourier extension. We recall this analysis in greater detail in \S\ref{s:convergence}, along with its generalization to arbitrary $T$.

The map~\eqref{E:map_T2} demonstrates the close relationship that exists between Fourier series and polynomials. Note the similarity to the classical Chebyshev map $x = \cos \theta$ that takes Chebyshev polynomials to trigonometric basis functions. Compared to Chebyshev expansions, however, it is important to note that the roles of polynomials and trigonometric functions are interchanged. The Chebyshev expansion of a given function $f$ is a polynomial approximation, which is equivalent to the Fourier series of a related function. The Fourier extension of a function on the other hand is a trigonometric approximation, which is equivalent to the polynomial expansion of a related function. In that sense, Fourier extensions and Chebyshev expansions are dual to each other.

\subsection{Key results} \label{ss:results}
The main result of this paper is that the resolution constant $r = r(T)$ of the  continuous Fourier extension satisfies
\[
 r(T) \leq 2 T \sin \left ( \tfrac{\pi}{2T} \right ),\quad T \in (1,\infty).
\]
Accordingly, the Fourier extension $g_n$ of the function~\eqref{E:fmodel} will begin to converge once $n$ exceeds $\frac{1}{2}r(T) \omega $ (recall that $g_n$ involves $2n+1$ degrees of freedom).  In particular, we find that $r(T) \sim 2T$ for $T \approx 1$ and $r(T) \sim \pi$ as $T \rightarrow \infty$. Note that the resolution of the function $f(x) = \exp(i \omega \pi x)$ using classical Fourier series on $[-T,T]$ would require a minimum of $2T \omega$ degrees of freedom (whenever $f$ is periodic).  Thus, Fourier extensions exhibit comparable performance for $T$ close to $1$. However, as $T$ increases, $f$ can be resolved more efficiently via its Fourier extension than if it had been directly expanded in a Fourier series on $[-T,T]$.  In particular, the resolution power is bounded above for all $T$ by $\pi$, which is precisely the resolution constant for polynomial approximations.

Aside from establishing when asymptotic convergence will occur, we also show that the convergence in this regime is exponential, and that for all sufficiently large $n$ the rate is precisely $\rho = E(T)$, where
\[
E(T) = \cot^2 \left ( \tfrac{\pi}{4 T} \right ),
\]
(this result actually holds for all sufficiently analytic functions $f$, not just~\eqref{E:fmodel}). Here, we note that $E(1)=1$, implying no exponential convergence for $T=1$. This is of course a consequence of the Gibbs phenomenon of Fourier series on $[-1,1]$. Convergence is exponential for all $T$ greater than $1$, however, and the rate increases for larger $T$.

In summary, the main conclusion we draw in this paper is the following. Smaller $T$ yields better resolution power of the continuous  Fourier extension, at a cost of slower, but still exponential, convergence. Conversely, larger $T$ yields faster exponential convergence, at the expense of reduced resolution power.  Formally, one may also obtain a resolution constant of $2$ in the limit $n \rightarrow \infty$ by allowing $T \rightarrow 1$ as $n \rightarrow \infty$, and towards the end of the paper we shall discuss different strategies for doing this, some of which are quite effective in practice.

Unfortunately, the linear system of equations to be solved when computing~\eqref{E:ls} is severely ill-conditioned.  As a result, the best attainable accuracy with a continuous Fourier extension is of order $\sqrt{\epsilon}$, where $\epsilon$ is the machine precision used.  To overcome this, we present a new Fourier extension (referred to as the \textit{discrete} Fourier extension), based on a judicious choice of interpolation nodes, which leads to far less severe condition numbers.  Numerical examples demonstrate much higher attainable accuracies with this approach, typically of order $\epsilon$, whilst retaining both the same rates of convergence and resolution constant.

Inherent to Fourier extensions is the concept of redundancy: namely, there are infinitely many possible extensions of a given function.  As we explain, this not only leads to the ill-conditioning mentioned above, it also means that the result of a numerical computation may differ somewhat from the `theoretical' (i.e. infinite precision) continuous or discrete Fourier extension.   Later in the paper, we discuss this observation and its consequences in some detail.

\subsection{Relation to previous work}\label{ss:relation}
The question of resolution power of Fourier series and Chebyshev polynomial expansions was first developed rigorously by Gottlieb and Orszag \cite[p.35]{naspec} (see also \cite{OrszagIsraeli74}), who introduced and popularized the concept of points per wavelength.  Therein the figures of $2$ and $\pi$ respectively were derived, the latter being generalized to arbitrary Gegenbauer polynomial expansions in \cite{GottGibbs2} (in \S\ref{s:OPSresolution} we provide an alternative proof, valid for almost any orthogonal polynomial system (OPS)).  Resolution was also discussed in detail in \cite[chpt. 2]{boyd}, where, aside from Fourier and Chebyshev expansions, the extremely poor performance of finite difference schemes was noted.

One-dimensional Fourier extensions were, arguably, first studied in detail in \cite{BoydFourCont,brunoFEP}, where they were employed to overcome the Gibbs phenomenon in standard Fourier expansions, as well as the Runge phenomenon in equispaced polynomial interpolation (see also \cite{BoydRunge}).  Application to surface parametrizations was also explored in \cite{brunoFEP}.  

Similar ideas (typically referred to as \textit{Fourier embeddings}), were previously proposed to solve PDE's in complex geometries.  These work by embedding the domain in a larger bounding box, and computing a Fourier series approximation on this domain.  See \cite{BadeaDaripa03b,BertolottiHerbertSpalart92,boyd2005fourier,BuenoOrovio06,BuenoOrovioPerezGarcia06,BuenoOrovioPerezGarciaFenton06,BuffatLePenven11,Garbey00,JuangHong01,Lui09,pasquettiFourEmbed,SabetghadamSharafatmandjoorNorouzi09}.  Such methods can be shown to work well, in principle even for arbitrary regions, but they are typically prohibitively computationally expensive.

More recently, a very effective method was developed in \cite{bruno2010high,lyon2010high} to solve time-dependent PDE's in complex geometries.  This approach is based on a technique for obtaining one-dimensional Fourier extensions, known as the FE--Gram method, which is then combined with an alternating direction technique, as well as standard FFTs, to solve the PDE efficiently and accurately.   This approach has also been applied to Navier--Stokes equations \cite{albin2011}.  Interestingly, it is shown and emphasized in~\cite{lyon2010high} (see also \cite{albin2011}) that this method leads to an absence of dispersion errors (or pollution errors) -- another beneficial property for wave simulations shared with classical Fourier series, and very much related to resolution power.  

Having said this, we note at this point that the FE--Gram approach is quite different to the Fourier extensions we consider in this paper.  The FE--Gram approximation is not exponentially convergent, and in practice, the order of convergence is usually limited by the user to ensure stability in the overall PDE solver.  However, the FE--Gram approximation is designed specifically to be computed efficiently, in $\mathcal{O}(n \log n)$ time, using only function evaluations on an equispaced grid.  Although there has been some recent progress in the rapid computation of exponentially-convergent Fourier extensions of the form~\eqref{E:g_form} from equispaced data \cite{LyonFESVD,LyonFast}, this is not an issue we shall dwell on in this paper.  Thus, the main contribution of this paper is approximation-theoretic: we show that one can construct exponentially-convergent Fourier extensions with a resolution constant arbitrarily close to optimal.  In \S \ref{s:conclusions}
 we discuss ongoing and future work pertaining to these issues.

This aside, our use of Fourier extensions in this paper to resolve oscillatory functions is superficially quite similar to the Kosloff--Tal--Ezer mapping (see \cite{KTEmapped}, \cite[chpt 16.9]{boyd} and references therein, and more recently, \cite{haletrefethenconformal}) for improving the severe time-step restriction inherent in Chebyshev spectral methods.  Such an approach also improves the very much related property of resolution power.  In this technique, one replaces the standard Chebyshev interpolation nodes with a sequence of mapped points, and expands in a nonpolynomial basis defined via the particular mapping.  Roughly speaking, with Fourier extensions, the situation is reversed.  Rather than fixing interpolation nodes, one specifies a particular basis (i.e. the Fourier basis for a particular $T$) that gives rapidly convergent expansions and good resolution properties, and chooses an appropriate means for computing the extension (for example, a particular configuration of collocation nodes) via the 
corresponding mapping.

Finally, we remark in passing that there are a number of commonly used alternatives to Fourier extensions. Such methods typically arise from the desire to reconstruct a function directly from its Fourier coefficients (or pointwise values on an equispaced grid), whether via re-expanding in a sequence Gegenbauer polynomials \cite{GottGibbsRev,GottGibbs1}, or by smoothing the function by implicitly matching its derivatives at the domain boundary \cite{Eckhoff3}.  Whilst the latter retains a resolution constant of $2$, it only yields algebraic convergence of a finite order, and suffers from severe ill-conditioning.  Conversely, the Gegenbauer reconstruction procedure offers exponential convergence, but with a significant deterioration in resolution power \cite{GottGibbs2}.

The outline of the remainder of this paper is as follows.  In \S\ref{s:convergence} we detail the convergence of Fourier extensions for arbitrary $T>1$, and in \S \ref{ss:convergence_numerical} we address numerical computation.  In \S\ref{s:OPSresolution} we consider the resolution power of polynomial expansions, and in \S\ref{s:resolution} we derive the resolution constant for Fourier extensions.

\section{Fourier extensions on arbitrary intervals}\label{s:convergence}

In this section, we recall and generalize the analysis given in \cite{huybrechs2010fourier} for the case $T=2$. We first show a new result, namely that the continuous Fourier extension converges spectrally for all smooth functions $f$ and for all $T>1$ fixed.  Next, a more involved analysis in \S\ref{ss:convergence_exact} demonstrates that the continuous Fourier extension does in fact converge exponentially whenever $f$ is analytic. In doing so, we repeat some of the reasoning in \cite{huybrechs2010fourier} for the clarity of presentation, as well as for establishing notation that is needed later in the paper.

Before presenting these results, a word about terminology in order to avoid confusion. Throughout this paper, we will refer to the following three types of convergence of an approximation $f_n$ to a given function $f$.  We say that $f_n$ converges \textit{algebraically fast} to $f$ at \textit{rate $k$} if $\| f - f_n \| = \BIGO(n^{-k})$ as $n \rightarrow \infty$.  Conversely, $f_n$ converges \textit{spectrally fast} to $f$ if the error $\| f - f_n\|$ decays faster than any algebraic power of $n^{-1}$, and \textit{exponentially fast} if there exists some constant $\rho>1$ such that $\| f - f_n \| = \BIGO(\rho^{-n})$ for all large $n$.

\subsection{Spectral convergence}\label{ss:spectral}
Standard  approximations based on orthogonal polynomials (or Fourier series in the periodic case) converge exponentially fast provided $f$ is analytic, and spectrally fast if $f$ is only smooth \cite[chpts 2,5]{SMSD}.  Whenever $f$ has only finite regularity, convergence is algebraic at a rate determined by the degree of smoothness: specifically, if $f$ is $(k-1)$-times continuously differentiable in $[-1,1]$ and $f^{(k)}$ exists almost everywhere and is square-integrable (equivalently, $f \in H^{k}[-1,1]$ -- the $k$th standard Sobolev space of functions defined on $[-1,1]$), then $f_n$ converges algebraically fast at rate $k$ .  Our first result regarding Fourier extensions illustrates identical convergence in this setting:

\begin{theorem} \label{t:specconv}
Suppose that $f \in H^{k}[-1,1]$ for some $k \in \N$ and that $T_0 > 1$.  Then, for all $n \in \N$ and $T \geq T_0$,
\begin{equation}
\label{specineq}
\| f - g_n \|_{L^2_{[-1,1]}} \leq c_{k}(T_0) \left ( \frac{n \pi}{T} \right )^{-k} \| f \|_{H^{k}_{[-1,1]}},
\end{equation}
where $c_{k}(T_0) > 0 $ is independent of $n$, $f$ and $T$, $\| \cdot \|_{H^k_{[-1,1]}}$ is the standard norm on $H^{k}[-1,1]$ and $g_n$ is the continuous Fourier extension of $f$ on $[-T,T]$ defined by~\eqref{E:ls}.
\end{theorem}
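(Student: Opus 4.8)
The plan is to use the fact that $g_n$ is the $\elltwo$-best approximation to $f$ from $G_n$, so that $\|f-g_n\|_{\elltwo}\le\|f-g\|_{\elltwo}$ for \emph{any} trial function $g\in G_n$. It then suffices to exhibit one sufficiently accurate $g\in G_n$. The natural candidate is the $(2n+1)$-term truncated Fourier series on $[-T,T]$ of a periodic Sobolev extension of $f$: periodic Fourier truncation converges at the algebraic rate $k$ for functions of Sobolev regularity $k$, and restriction to $[-1,1]$ can only decrease the $L^2$ error.

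Concretely, first I would invoke a classical bounded Sobolev extension. Fix $s:=\tfrac12(1+T_0)\in(1,T_0)$ and let $E:H^k[-1,1]\to H^k(\R)$ be a bounded linear operator with $(Ef)|_{[-1,1]}=f$, $\operatorname{supp}Ef\subseteq[-s,s]$, and $\|Ef\|_{H^k(\R)}\le C_k(T_0)\|f\|_{H^k[-1,1]}$; such an $E$ is obtained by the usual reflection across $x=\pm1$ (a finite linear combination matching derivatives up to order $k-1$) followed by multiplication by a fixed smooth cutoff supported in $[-s,s]$. Since $[-s,s]\subset(-T,T)$ for every $T\ge T_0$, the $2T$-periodic extension $\tilde f$ of $(Ef)|_{[-T,T]}$ lies in the periodic Sobolev space $H^k_{\mathrm{per}}[-T,T]$, agrees with $f$ on $[-1,1]$, and, because $\tilde f$ and its derivatives are supported in $[-s,s]$ within a single period, satisfies $\|\tilde f\|_{H^k[-T,T]}=\|Ef\|_{H^k(\R)}\le C_k(T_0)\|f\|_{H^k[-1,1]}$ with no dependence on $T$. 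Expanding $\tilde f(x)=\sum_{m\in\Z}\hat f_m\,\ee^{\ii m\pi x/T}$ and taking $g:=\sum_{|m|\le n}\hat f_m\,\ee^{\ii m\pi x/T}\in G_n$, Parseval's identity together with $(|m|\pi/T)^{-2k}\le(n\pi/T)^{-2k}$ for $|m|>n\ge1$ yields
\begin{equation*}
\|\tilde f-g\|_{L^2_{[-T,T]}}^2=2T\sum_{|m|>n}|\hat f_m|^2\le\Big(\tfrac{n\pi}{T}\Big)^{-2k}\,2T\sum_{|m|>n}\Big(\tfrac{m\pi}{T}\Big)^{2k}|\hat f_m|^2\le\Big(\tfrac{n\pi}{T}\Big)^{-2k}\|\tilde f\|_{H^k[-T,T]}^2 .
\end{equation*}
Chaining the inequalities, $\|f-g_n\|_{\elltwo}\le\|f-g\|_{\elltwo}\le\|\tilde f-g\|_{L^2_{[-T,T]}}\le(n\pi/T)^{-k}\|\tilde f\|_{H^k[-T,T]}\le C_k(T_0)(n\pi/T)^{-k}\|f\|_{H^k[-1,1]}$, giving~\eqref{specineq} with $c_k(T_0)=C_k(T_0)$ (the case $n=0$ being vacuous).

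The only delicate point is the uniformity in $T\ge T_0$, i.e.\ the assertion that $c_k$ depends on $T_0$ but not on $T$; this is precisely what is bought by fixing the support of the extension inside $(-T_0,T_0)$ at the outset, so that neither the extension constant nor the periodic Sobolev norm of $\tilde f$ grows with $T$. Everything else---quasi-optimality of the projection, existence of the reflection extension operator, and the one-line Fourier truncation bound---is standard. As $T_0\downarrow1$ the constant $C_k(T_0)$ may blow up, the cutoff being squeezed into an ever-smaller neighbourhood of $[-1,1]$, which is consistent with the loss of exponential convergence at $T=1$ noted in \S\ref{ss:results}.
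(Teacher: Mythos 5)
Your proposal is correct and follows essentially the same route as the paper's proof: quasi-optimality of $g_n$, a Sobolev extension of $f$ cut off to have support strictly inside $[-T_0,T_0]$ (making it periodic on $[-T,T]$ with an $H^k$ norm bound independent of $T$), and the standard $(n\pi/T)^{-k}$ truncated-Fourier-series estimate. The only differences are cosmetic: the paper cites an extension operator and the periodic truncation bound rather than building the reflection extension and the Parseval argument explicitly, as you do.
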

\begin{proof}
Recall that there exists an extension operator $\mathcal{E} : H^{k}[-1,1] \rightarrow H^{k}(\R)$ with $\mathcal{E} f |_{[-1,1]} = f$ and $\| \mathcal{E} f \|_{H^k_{\R}} \leq c \| f \|_{H^k_{[-1,1]}}$ for some positive constant $c$ independent of $f$ \cite{adams}.

Let $\chi \in C^{\infty}(\R)$ be monotonically decreasing and satisfy $\chi(x) = 0$ for $x>T_0-1$ and $\chi(x) = 1$ for $x<0$.  We define the bump function $\mathcal{B} \in C^{\infty}(\R)$ by
\begin{equation*}
\mathcal{B}(x) = \left\{\begin{array}{cc}\chi(-x-1) & -T_0\leq x < -1 \\1 & -1 \leq x  \leq 1 \\ \chi(x-1) & 1 < x \leq T_0 \\ 0 & |x| > T_0. \end{array}\right.
\end{equation*}
Note that the function $\mathcal{B}(x) \mathcal{E} f(x) \in H^{k}(\R)$ and has support in $[-T_0,T_0]$.  Thus its restriction to $[-T,T]$ is periodic for any $T\geq T_0$.  Since $g_{n}$ minimizes the $L^2$ norm error over all functions from the set $G_n$, we have
\begin{equation*}
\| f - g_n \|_{L^2_{[-1,1]}} \leq \| f - (\mathcal{B} \mathcal{E} f)_n \|_{L^2_{[-1,1]}} \leq \| \mathcal{B} \mathcal{E} f - (\mathcal{B} \mathcal{E} f)_n \|_{L^2_{[-T,T]}},
\end{equation*}
where $(\mathcal{B} \mathcal{E} f)_n$ denotes the $n$th partial Fourier sum of $\mathcal{B}(x) \mathcal{E} f(x)$ on $[-T,T]$.  Since $\mathcal{B}(x) \mathcal{E} f(x) \in H^{k}[-T,T]$ and is periodic on $[-T,T]$, a well-known estimate \cite[eqn (5.1.10)]{SMSD} gives
\begin{equation*}
\| \mathcal{B} \mathcal{E} f - (\mathcal{B} \mathcal{E} f)_n \|_{L^2_{[-T,T]}} \leq \left ( \frac{n \pi}{T} \right )^{-k} \| (\mathcal{B} \mathcal{E} f)^{(k)} \|_{L^2_{[-T,T]}}.
\end{equation*}
Moreover, $\| (\mathcal{B} \mathcal{E} f)^{(k)} \|_{L^2_{[-T,T]}} = \| (\mathcal{B} \mathcal{E} f)^{(k)} \|_{L^2_{[-T_0,T_0]}} \leq c_{k}(T_0) \| f \|_{H^k_{[-1,1]}}$ for some constant $c_k(T_0)$ independent of $f$.  Therefore, the result follows.
\end{proof}

\subsection{Exponential convergence}\label{ss:convergence_exact}

\subsubsection{The continuous Fourier extension}\label{sss:expansions}

As mentioned in the introduction, the continuous Fourier extension defined by~\eqref{E:ls} can be characterized in terms of certain nonclassical orthogonal polynomial expansions.  This was demonstrated for the case $T=2$ in \cite{huybrechs2010fourier}, but can be shown for any $T>1$ with relatively minor modifications.

Our approach is to construct an orthogonal basis for the space $G_n$ of $2T$-periodic functions. Since the least squares criterion~\eqref{E:ls} corresponds to an orthogonal projection, it then suffices to expand a given function $f$ in this basis in order to find its Fourier extension.

The cosine and sine functions in $G_n$ are already mutually orthogonal and hence can be treated separately. Consider the cosines first, i.e. the set
\[
 C_n := \{ \CKPTX \}_{k=0}^n.
\]
Trigonometric functions are closely related to polynomials, through an appropriate \emph{cosine-mapping}. This can be seen, for example, from the defining property of Chebyshev polynomials of the first kind $T_k$,
\[
 \cos k x = T_k( \cos x).
\]
In the same spirit, we define the map
\begin{equation}\label{E:map}
 y = \CPTX,
\end{equation}
and note that, since $\cos kx$ is an algebraic polynomial of degree $k$ in $\cos x$, the function $\CKPTX$ is a polynomial in $y$. From this we conclude that the set of cosines $C_n$ is a basis for the space of  algebraic polynomials in $y$ of degree $n$.

Since the functions in $C_n$ are linearly independent, an orthogonal basis exists. Moreover, we may write the basis functions as polynomials in $y$, say $T_k^T(y)=T_k^T(\CPTX)$. Orthogonality in $L^2[-1,1]$ implies
\begin{align*}
\delta_{kl} &= \int_{-1}^1 T_k^T(\CPTX) T_l^T(\CPTX) \DX{x} \\
&= 2 \int_{0}^1 T_k^T(\CPTX) T_l^T(\CPTX) \DX{x} \\
&= \frac{2T}{\pi} \int_{c(T)}^1 T_k^T(y) T_l^T(y) \CHW \DX{y},
\end{align*}
where, for ease of notation, we have defined the $T$-dependent constant
\[
c(T) := \cos \frac{\pi}{T}.
\]
In the latter step above, we applied the substitution~\eqref{E:map}, which maps the interval $[0,1]$ to $[c(T),1]$ and introduces the Jacobian with the inverse square root. It follows that the $T_k^T(y)$ are orthonormal polynomials on $[c(T),1]$ with respect to the weight function
\begin{equation}
\label{E:w1}
 w_1(y) = \frac{2T}{\pi} \CHW.
\end{equation}
This weight differs only by a constant factor from the typical weight function of Chebyshev polynomials of the first kind $T_k(y)$. However, the interval of orthogonality is different from that of the Chebyshev polynomials, since $[c(T),1]$ is contained within $(-1,1]$ for $T>1$, whereas Chebyshev polynomials are orthogonal over the whole interval $[-1,1]$.

We now consider the set of sines in $G_n$,
\[
 S_n := \{ \SKPTX \}_{k=1}^n.
\]
Analogously, this leads to orthogonal polynomials resembling Chebyshev polynomials of the second kind $U_k(y)$.  Indeed, from the property
\[
 \sin (k+1) x = U_k( \cos x) \sin x
\]
we find that $\SKPTX$ is also a polynomial in $y$, but only up to an additional factor. This factor is
\[
 \SPTX= \sqrt{1-y^2}.
\]
We therefore consider an orthogonal basis in the form
\[
U_k^T(y) \sqrt{1-y^2} = U_k^T(\CPTX) \SPTX.
\]
Orthogonality in $L^2 [-1,1]$ implies
\begin{align*}
\delta_{kl} &= \int_{-1}^1 U_k^T(\CPTX) \SPTX U_l^T(\CPTX) \SPTX \DX{x} \\
&= 2 \int_{0}^1 U_k^T(\CPTX) \SPTX U_l^T(\CPTX) \SPTX \DX{x} \\
&= \frac{2T}{\pi} \int_{c(T)}^1 U_k^T(y) U_l^T(y) \CHWS \DX{y}.
\end{align*}
Thus, the $U_k^T(y)$ are again orthonormal polynomials. The weight function
\begin{equation} \label{E:w2}
 w_2(y) = \frac{2T}{\pi} \CHWS
\end{equation}
corresponds to that of the Chebyshev polynomials of the second kind, but here too the interval of orthogonality $[c(T),1]$ differs from the classical case in the same $T$-dependent manner.

Since the functions $T^{T}_{k}(\CPTX)$, $U^T_k(\CPTX) \SPTX$ comprise a basis for the space $G_n$, and since the continuous Fourier extension $g_n$ is the orthogonal projection of $f$ onto $G_n$, we now deduce the following theorem:

\begin{theorem}\label{th:exact_solution}
 The continuous Fourier extension $g_n$ of a function $f$ is precisely
\begin{equation*}
 g_n(x) = \sum_{k=0}^n a_k T_k^T\left( \CPTX \right) + \sum_{k=0}^{n-1} b_k U_k^T\left( \CPTX \right) \SPTX,
\end{equation*}
where
\begin{align}
a_k &= \int_{-1}^1 f(x) T_k^T(\CPTX) \DX{x}, \label{E:ak_x}\\
b_k &= \int_{-1}^1 f(x) U_k^T(\CPTX) \SPTX \DX{x}, \label{E:bk_x}
\end{align}
and the polynomials $T_k^T(y)$ and $U_k^T(y)$ are orthonormal on $[c(T),1]$ with respect to the weights $w_1(y)$ and $w_2(y)$ given by~\eqref{E:w1} and~\eqref{E:w2} respectively.
\end{theorem}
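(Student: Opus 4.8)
The plan is to recognise the continuous Fourier extension $g_n$ as the orthogonal projection of $f$ (assumed to lie in $\elltwo$) onto the subspace $G_n$, and then to exhibit the functions $T_k^T(\CPTX)$ and $U_k^T(\CPTX)\SPTX$ as an orthonormal basis of $G_n$ in the $\elltwo$ inner product, so that the standard projection formula yields the stated expansion with the coefficients~\eqref{E:ak_x} and~\eqref{E:bk_x} read off directly. First I would invoke the projection theorem: $G_n$ is a finite-dimensional, hence closed, subspace of the Hilbert space $\elltwo$, so the least-squares problem~\eqref{E:ls} has a unique solution, and $g_n$ equals the orthogonal projection $P_{G_n}f$. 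Consequently, for any orthonormal basis $\{e_j\}$ of $G_n$ one has $g_n=\sum_j \langle f,e_j\rangle\, e_j$, and it remains only to produce such a basis.

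Next I would assemble the basis from the two families already analysed above. Via the cosine map~\eqref{E:map}, $C_n$ is a basis for the polynomials of degree $\le n$ in $y$; applying Gram--Schmidt in the $\elltwo$ inner product produces polynomials $T_k^T$, and the computation carried out preceding the theorem shows precisely that $\{T_k^T(\CPTX)\}_{k=0}^n$ is orthonormal in $\elltwo$ (equivalently that the $T_k^T$ are orthonormal on $[c(T),1]$ against $w_1$). Using $\SKPTX=U_{k-1}^T(\CPTX)\SPTX$ together with the identity $\sin(k+1)x=U_k(\cos x)\sin x$, the same procedure applied to $S_n$ yields functions $U_k^T(\CPTX)\SPTX$, $k=0,\dots,n-1$, orthonormal in $\elltwo$ by the second computation above. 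Finally, each $T_k^T(\CPTX)$ is even in $x$ while each $U_l^T(\CPTX)\SPTX$ is odd in $x$, so their product is odd and integrates to zero over $[-1,1]$; hence the two families are mutually orthogonal and their union is an orthonormal system of $2n+1$ functions in $G_n$.

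It then remains to check that these $2n+1$ functions span $G_n$ as a subspace of $\elltwo$, i.e.\ that the trigonometric generators of $G_n$ remain linearly independent after restriction to $[-1,1]$. This follows because any nontrivial linear combination of them is a nonzero $2T$-periodic analytic function on $\R$, which cannot vanish identically on $[-1,1]$; a dimension count (one constant, $n$ cosines, $n$ sines, total $2n+1$) then shows the orthonormal system is a basis of $G_n$. Substituting this basis into $g_n=P_{G_n}f$ and writing $a_k=\langle f,T_k^T(\CPTX)\rangle_{\elltwo}$, $b_k=\langle f,U_k^T(\CPTX)\SPTX\rangle_{\elltwo}$ gives exactly~\eqref{E:ak_x},~\eqref{E:bk_x} and the claimed formula for $g_n$.

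I do not expect a serious obstacle here: the substantive work — the cosine-mapping, the weight functions, and the orthonormality relations — has already been done in the paragraphs preceding the theorem. The only points requiring a little care, rather than pure bookkeeping, are the clean invocation of the projection theorem to convert the minimisation~\eqref{E:ls} into an orthogonal projection, and the short analyticity argument needed to guarantee that the Fourier basis of $G_n$ stays independent once restricted from $[-T,T]$ to the shorter interval $[-1,1]$ (so that the orthonormal system really has full dimension $2n+1$).
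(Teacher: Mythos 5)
Your proposal is correct and follows essentially the same route as the paper: the paper likewise deduces the theorem by observing that $\{T_k^T(\CPTX)\}_{k=0}^n \cup \{U_k^T(\CPTX)\SPTX\}_{k=0}^{n-1}$ is an orthonormal basis of $G_n$ in $\elltwo$ (built via the cosine map and the weights $w_1$, $w_2$) and that $g_n$ is the orthogonal projection of $f$ onto $G_n$, so the coefficients are the inner products~\eqref{E:ak_x} and~\eqref{E:bk_x}. Your extra remarks on the projection theorem and on linear independence of the restricted trigonometric functions simply make explicit what the paper leaves implicit.
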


\subsubsection{Exponential convergence of the continuous Fourier extension}\label{sss:convergence_exact}

The functions being expanded in the theory of the continuous Fourier extension above are related to $f$ through the inverse of the map~\eqref{E:map}. It is necessary to distinguish between the even and odd parts of $f$,
\begin{align*}
 f_e(x) = \frac{f(x)+f(-x)}{2}, \quad  f_o(x) = \frac{f(x)-f(-x)}{2}.
\end{align*}
Since
\begin{align}
 a_k &= \int_{-1}^1 f(x) T_k^T(\CPTX) \DX{x} \nonumber \\
 &= 2 \int_{0}^1 f_e(x) T_k^T(\CPTX) \DX{x} \nonumber \\
 &= \frac{2T}{\pi} \int_{c(T)}^1 f_e\left( \frac{T}{\pi} \cos^{-1} y \right) T_k^T(y) \CHW \DX{y},\label{E:ak_y}
\end{align}
we see that the even part of the Fourier extension $g_n$ is precisely the orthogonal polynomial expansion of the function
\[
 f_1(y) = f_e\left( \frac{T}{\pi} \cos^{-1} y \right) = f_e(x).
\]
A similar reasoning for the odd part of $f$, based on the coefficients $b_k$, yields the second function
\[
 f_2(y) = \frac{f_o\left( \frac{T}{\pi} \cos^{-1} y \right)}{\sqrt{1-y^2}} = \frac{f_o(x)}{\SPTX},
\]
with the odd part of $g_n$, when divided by $\sin \frac{\pi}{T} x$, corresponding to the expansion of $f_2$ in the polynomials $U^{T}_{k}(y)$.

Let us now recall some standard theory on orthogonal polynomial (see, for example, \cite{boyd,rivlin1990chebyshev,trefethen2008clenshawcurtis} for an in-depth treatment).  The convergence rate of the expansion of an analytic function $f$ in a set of orthogonal polynomials is exponential, and for a wide class of orthogonal polynomials on the interval $[-1,1]$ the precise rate of convergence is determined by the largest \emph{Bernstein ellipse} 
\[
 e(\rho) := \{ \frac{1}{2} (\rho^{-1} e^{-\ii \theta}+\rho e^{\ii \theta}): \theta \in [-\pi,\pi]\}, \quad \rho \geq 1,
\]
within which $f$ is analytic.  We shall use the convention $\rho \geq 1$ throughout.  Note that the ellipse $e(\rho)$ has foci $\pm 1$, and the parameter $\rho$ coincides with the sum of the lengths of its major and minor semiaxes.  Suppose now that $f$ is analytic within the Bernstein ellipse of radius $\rho = \rho_{\max}$ and not analytic in any ellipse with $\rho > \rho_{\max}$.  Then the rate convergence of the expansion of $f$ in a set of orthogonal polynomials on $[-1,1]$ is precisely $(\rho_{\max})^{-n}$.

Returning to the continuous Fourier extension, we now see that its convergence rate is determined by the nearest singularity of $f_1(y)$ or $f_2(y)$ to the interval $[c(T),1]$. Note that, even when $f$ is entire, singularities are introduced by the inverse cosine mapping at the points $y= \pm 1$. One can verify that the singularity at $y=1$ is removable for both $f_1$ and $f_2$. Thus, the nearest singularity lies at $y=-1$.

To apply the above polynomial theory, it remains to transform the interval $[c(T),1]$ to the standard interval $[-1,1]$. This is achieved by the affine map
\begin{equation}\label{E:m}
m^{-1}(s) = 2\frac{s-c(T)}{1-c(T)} - 1,\quad s \in [c(T),1],
\end{equation}
with inverse $m(t) = \frac{1}{2}(1-c(T))t+\frac{1}{2}(1+c(T))$ mapping $[-1,1]$ to $[c(T),1]$.  Note that $m^{-1}$ maps the point $y=-1$ to the point
\[
 u = \frac{ 3 + c(T)} {c(T)-1} = 1 - 2\mathrm{cosec}^2 \left ( \tfrac{\pi}{2 T} \right ) <-1,
\]
which lies on the negative real axis. Since the Bernstein ellipse $e(\rho)$ crosses the negative real axis in the point $-\frac12 (\rho^{-1}+\rho)$, equating this with $u$ yields the maximal value of $\rho$.  We therefore deduce the following result:
\begin{theorem}\label{th:expconv}
 For all sufficiently analytic functions $f$, the error in approximating $f$ by its continuous Fourier extension $g_n$ satisfies
\begin{equation} \label{E:error}
 | f(x) - g_n(x) | \leq c_f E(T)^{-n},\quad \forall n \in \mathbb{N},\ T > 1,
\end{equation}
uniformly for $x \in [-1,1]$, where $c_f$ is a constant depending on $f$ only, and 
\[
 E(T) = \cot^2 \left ( \tfrac{\pi}{4 T} \right ).
\]
\end{theorem}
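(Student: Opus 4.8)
The plan is to reduce the statement to the standard convergence theory for orthogonal polynomial expansions recalled just above, applied to the two transplanted functions $f_1$ and $f_2$. First I would invoke Theorem~\ref{th:exact_solution} together with the remarks following it: the even part of $g_n$ is exactly the degree-$n$ expansion of $f_1(y)=f_e\bigl(\tfrac{T}{\pi}\cos^{-1}y\bigr)$ in the orthonormal polynomials $T_k^T$, while the odd part of $g_n$, divided by $\SPTX$, is the degree-$(n-1)$ expansion of $f_2(y)=f_o\bigl(\tfrac{T}{\pi}\cos^{-1}y\bigr)/\sqrt{1-y^2}$ in the $U_k^T$, both on $[c(T),1]$. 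Since $|\SPTX|\le 1$ on $[-1,1]$, the pointwise error $|f(x)-g_n(x)|$ is bounded by the sum of the (uniform in $y$) truncation errors of these two expansions, so it suffices to estimate those.

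Next I would transplant $[c(T),1]$ to the standard interval $[-1,1]$ via the affine map $m$ of~\eqref{E:m}. The pulled-back weights $w_1\circ m$ and $w_2\circ m$ are, up to an affine rescaling of the variable and a constant factor, the Chebyshev weights of the first and second kind; in particular they lie in the Szeg\H{o} class, so the theory quoted above applies verbatim: if $f_1\circ m$ (respectively $f_2\circ m$) extends analytically to the open Bernstein ellipse $e(\rho)$, its degree-$n$ orthonormal expansion converges to it uniformly at rate $\rho^{-n}$. It therefore remains to determine the largest Bernstein ellipse to which $f_1\circ m$ and $f_2\circ m$ extend analytically, which amounts to locating the nearest singularity of $f_1,f_2$ to $[c(T),1]$.

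This is the analytic heart of the argument. Writing $f_e(z)=\tilde f(z^2)$ and $f_o(z)=z\,\hat f(z^2)$ with $\tilde f,\hat f$ analytic near the origin (possible because $f$ is analytic near $[-1,1]$), and using that $(\cos^{-1}y)^2$ is analytic at $y=1$ whereas $\cos^{-1}y$ has a square-root branch point at $y=-1$, one checks that $y=1$ gives a removable singularity for both $f_1$ and $f_2$ — the factor $\sqrt{1-y^2}$ cancels the matching factor of $\cos^{-1}y$ near $y=1$ — while $y=-1$, i.e.\ $x=T$, is a genuine branch point. Under the hypothesis that $f$ is \emph{sufficiently analytic} — that is, analytic in a complex neighbourhood of $[-1,1]$ large enough that its own singularities, which appear at the points $y=\cos(\pi x_0/T)$ corresponding to singularities $x_0$ of $f$, lie outside the ellipse obtained below — the branch point $y=-1$ is the singularity of $f_1,f_2$ nearest $[c(T),1]$. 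Its image under $m^{-1}$ is the point $u=1-2\csc^2\!\bigl(\tfrac{\pi}{2T}\bigr)<-1$ on the negative real axis, as computed above. Since $e(\rho)$ meets the negative real axis at $-\tfrac12(\rho+\rho^{-1})$, the maximal admissible $\rho$ solves $\rho+\rho^{-1}=-2u=4\csc^2\!\bigl(\tfrac{\pi}{2T}\bigr)-2$; setting $\theta=\tfrac{\pi}{4T}$ and using the identity $4\csc^2(2\theta)-2=\tan^2\theta+\cot^2\theta$, the two roots are $\cot^2\theta$ and $\tan^2\theta$, and for $T>1$ we have $\theta\in(0,\tfrac{\pi}{4})$, so $\rho_{\max}=\cot^2\!\bigl(\tfrac{\pi}{4T}\bigr)=E(T)$. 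Combining the even- and odd-part estimates yields $|f(x)-g_n(x)|\le c_f E(T)^{-n}$ uniformly on $[-1,1]$ for all large $n$, and the bound extends to every $n\in\mathbb{N}$ by enlarging $c_f$.

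I expect the main obstacle to be the third step: rigorously constructing the analytic continuation of $f_1$ and $f_2$ off $[c(T),1]$, verifying the removability at $y=1$ cleanly, and making the "sufficiently analytic" hypothesis precise enough that the branch point at $y=-1$ genuinely dominates. A secondary point needing care is confirming that the transplanted orthonormal polynomials $T_k^T\circ m$ and $U_k^T\circ m$ lie within the class for which the cited results deliver a uniform (not merely $L^2$) rate $\rho^{-n}$; since their weights differ from the Chebyshev weights only by an affine change of variable and a constant, this should follow directly from the classical statements, the orthonormalization constants and polynomial factors contributing at most algebraic growth in $n$ that is absorbed by the geometric decay.
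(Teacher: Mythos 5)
Your proposal is correct and follows essentially the same route as the paper: reduce to the orthogonal polynomial expansions of $f_1$ and $f_2$ via Theorem~\ref{th:exact_solution}, transplant $[c(T),1]$ to $[-1,1]$ with the affine map~\eqref{E:m}, note removability at $y=1$ and the branch point at $y=-1$, and equate $-\tfrac12(\rho+\rho^{-1})$ with $u=1-2\mathrm{cosec}^2\left(\tfrac{\pi}{2T}\right)$ to obtain $\rho_{\max}=\cot^2\left(\tfrac{\pi}{4T}\right)=E(T)$. The extra detail you supply on the analytic continuation of $f_1,f_2$ and the meaning of ``sufficiently analytic'' simply fleshes out steps the paper leaves to the reader.
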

This theorem extends Theorem 3.14 of \cite{huybrechs2010fourier} to the case of general $T > 1$.  Note that the estimate~\eqref{E:error} holds for all $n\in \mathbb{N}$ and $T>1$.  For the sake of brevity, we have limited the exposition here to the case of sufficiently analytic functions $f$. Yet, we make the following comments:
\begin{itemize}
 \item The rate of exponential convergence may be slower than $E(T)$ when $f(x)$ is not sufficiently analytic as a function of $x$, i.e. if $f$ has a singularity closer than that introduced by the inverse cosine mapping.  
  \item The convergence may also be faster than $E(T)$ if $f$ is analytic and periodic on $[-T,T]$. In that case, one can verify that the singularity introduced by the inverse cosine at $y=-1$ is also removable. Thus, the convergence rate is no longer limited by the singularity introduced by the map between the $x$ and $y$ domains.
 \item For the case $T=2$ we recover the convergence rate $E(2) = 3+2\sqrt{2}$ found in~\cite[\S3.4]{huybrechs2010fourier}.
\end{itemize}
	
The function $E(T)$ is depicted in Fig.~\ref{fig:theory}. It is monotonically increasing on $(1,\infty)$ as a function of $T$, and behaves like $1+\pi (T-1)$ for $T \approx 1$ and $\frac{16}{\pi^2}T^2$ for $T \gg 1$. Thus, larger $T$ leads to more rapid exponential convergence.  This is confirmed in Fig.~\ref{fig:convrate}, where we plot the error in approximating $f(x) = e^x$ by $f_n$ for various $n$ and $T$ (for this example we use high precision to avoid any numerical effects -- see \S \ref{ss:convergence_numerical}).  Note the close correspondence between the observed and predicted convergence rates, as well as the significant increase in convergence rate for larger $T$.  Having said this, it transpires that increasing $T$ adversely affects the resolution power, a topic we will consider further in \S\ref{s:resolution}.

\begin{figure}[t]
\begin{center}
    \includegraphics[width=5cm]{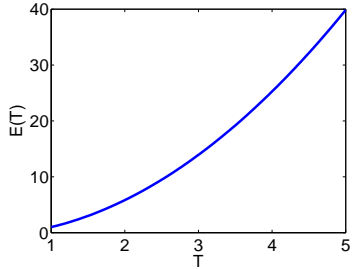}
 \caption{Theoretical convergence rate $E(T)$ of the continuous Fourier extension as a function of the extension parameter $T$.}\label{fig:theory}
\end{center}
\end{figure}

\begin{figure}[t]
\begin{center}
  \subfigure[Log error]{
    \includegraphics[width=5cm]{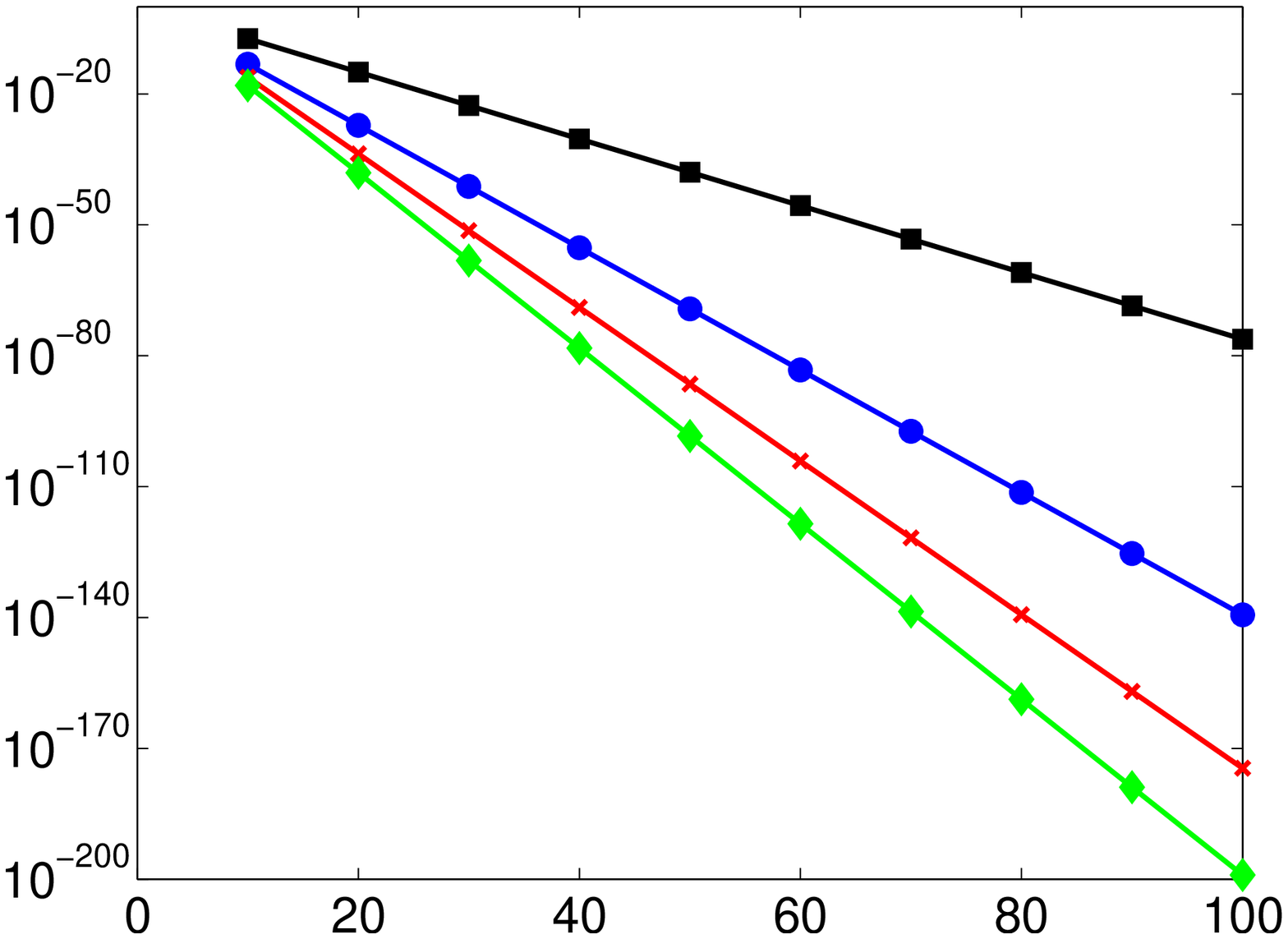}
  }
  \subfigure[Convergence rate]{
    \includegraphics[width=5cm]{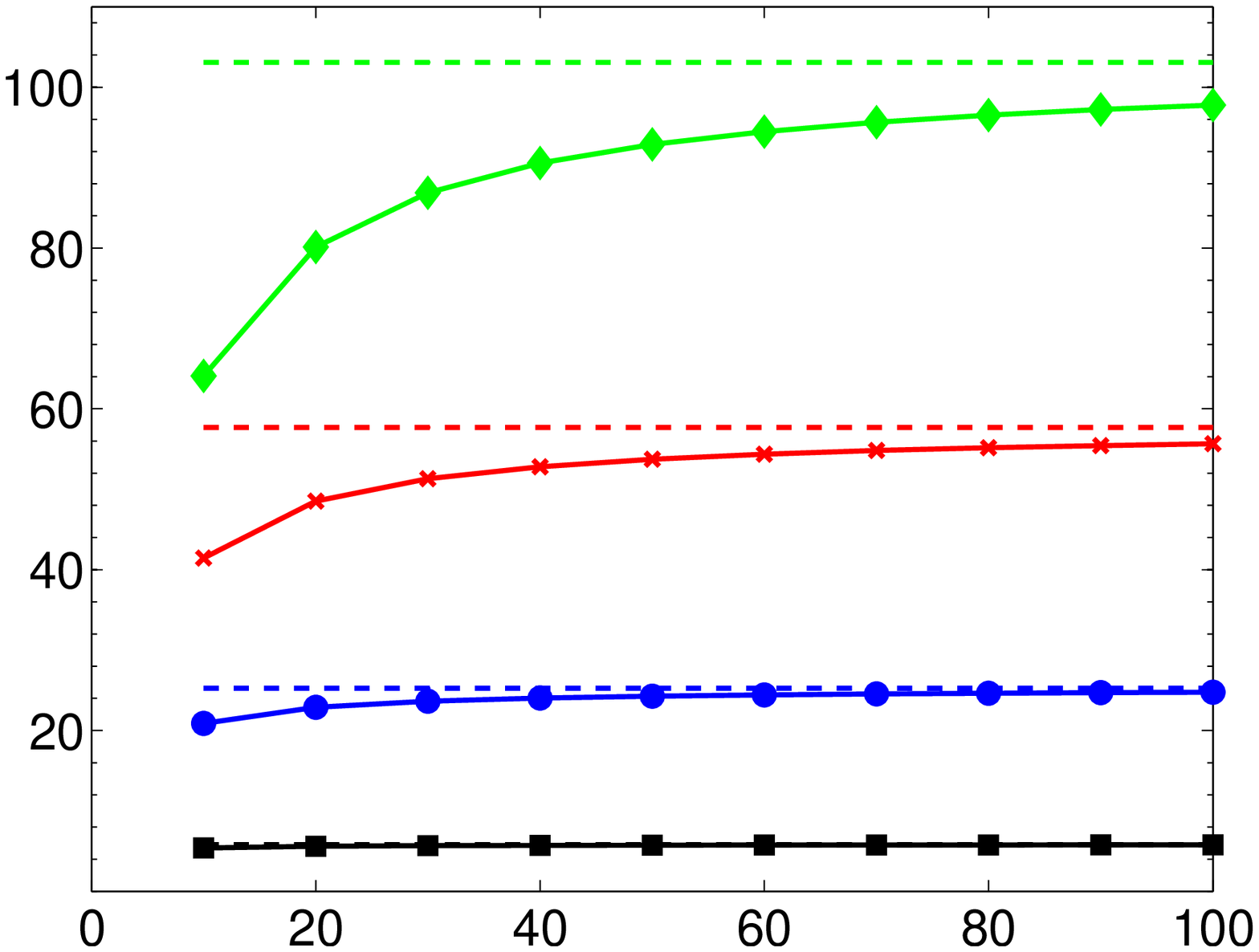}
  }
 \caption{The quantities $e_{n} = \| f - g_n \|_{L^\infty_{[-1,1]}}$ (solid line) and $E(T)^{-n}$ (dashed line) for $T=2,4,6,8$.  The left panel shows the values $e_n$ and $E(T)^{-n}$.  The right panel gives the scaled values $\exp(- \frac{1}{n} \log e_n )$ and $E(T)$.}\label{fig:convrate}
\end{center}
\end{figure}

\section{Computation of Fourier extensions}\label{ss:convergence_numerical}
Although in \S \ref{sss:expansions} we introduced a new, orthogonal basis for the set $G_n$, this basis is never actually used in computations.  Instead, we always use either complex exponentials or real sines and cosines.  The reason for this is primarily simplicity (recall that the relevant orthogonal basis relates to nonstandard orthogonal polynomials, which therefore would need to be precomputed), and the fact that having a Fourier extension in the form of a standard Fourier series is certainly most convenient in practice. 

With this in mind, let $\{ \phi_k \}^{2n+1}_{k=1}$ be an enumeration of a `standard' basis for the set $G_n$ (e.g. complex exponentials), and suppose that the continuous Fourier extension $g_n$ of a function $f$ has coefficients $\{ a_k \}^{2n+1}_{k=1}$ in this basis.  Since $g_n$ is the solution to~\eqref{E:ls}, it is straightforward to show that
\begin{equation} \label{E:exactlinsys}
A a = B,
\end{equation}
where $A \in \C^{(2n+1) \times (2n+1)}$ and $B \in \C^{2n+1}$ have entries $\langle \phi_k , \phi_j \rangle$ and $\langle f , \phi_j \rangle$ respectively, and $a$ is the vector of coefficients $a_k$.  Thus, given $B$, we may compute the continuous Fourier extension $g_n$ by solving~\eqref{E:exactlinsys}.

There are two issues with computing the continuous Fourier extension.  First, one requires knowledge (or prior computation) of the integrals $\langle f , \phi_j \rangle$.  Second, the condition number $\kappa(A) \sim E(T)^{2n}$ \cite{huybrechs2010fourier}.  Such severe ill-conditioning typically limits the maximal achievable accuracy (by this we mean the smallest possible error $\| f - g_n \|$ that can be attained in finite precision for any $n$) of the continuous Fourier extension to $\BIGO(\sqrt{\epsilon})$, where $\epsilon$ is the machine precision used \cite{huybrechs2010fourier}.

Ill-conditioning of the exact extension is due to the redundancy of the set $G_\infty$.  This is explained in the next section.  Fortunately, its effect can be greatly mitigated by computing a so-called \emph{discrete} Fourier extension instead.  This extension, which converges at precisely the same rate as the continuous Fourier extension, involves only pointwise evaluations of $f$, and consequently also avoids the first issue stated above.  This is discussed in \S \ref{ss:discrete}.

\subsection{Ill-conditioning}
The reason for ill-conditioning in the exact Fourier extension is simple: although the functions $\exp(i \frac{k \pi}{T} x)$ form an orthogonal basis on $[-T,T]$, they only constitute a frame when restricted to the interval $[-1,1]$.

\begin{lemma}\label{lem:frame}
The set 
\begin{equation} \label{E:fourframe}
\Phi : = \{ \frac{1}{\sqrt{2}} \} \cup \{ \frac{1}{\sqrt{2}}\exp( \ii \tfrac{k \pi}{T} x ) \}_{k \in \Z \backslash \{ 0 \} },
\end{equation}
is a normalized tight frame for $L^2[-1,1]$ with frame bound $T$.
\end{lemma}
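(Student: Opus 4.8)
The plan is to verify the tight-frame identity directly by computing, for an arbitrary $f \in L^2[-1,1]$, the sum $\sum_{\psi \in \Phi} |\langle f, \psi \rangle|^2$ and showing it equals $T \|f\|_{L^2[-1,1]}^2$. First I would extend $f$ by zero to a function $\tilde{f}$ on $[-T,T]$; since $f$ is supported in $[-1,1] \subset [-T,T]$, we have $\langle f, \psi_k \rangle_{L^2[-1,1]} = \langle \tilde{f}, \psi_k \rangle_{L^2[-T,T]}$ for each frame element. The key observation is that the functions $\frac{1}{\sqrt{2T}} \exp(\ii \frac{k\pi}{T} x)$, $k \in \Z$, form a genuine orthonormal basis of $L^2[-T,T]$ (the standard Fourier basis on an interval of length $2T$), so Parseval's theorem on $[-T,T]$ gives $\sum_{k \in \Z} |\langle \tilde{f}, \frac{1}{\sqrt{2T}}\exp(\ii \frac{k\pi}{T} x) \rangle_{L^2[-T,T]}|^2 = \|\tilde{f}\|_{L^2[-T,T]}^2 = \|f\|_{L^2[-1,1]}^2$.

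The second step is simply to reconcile the normalization. The frame $\Phi$ as stated uses the constant $\frac{1}{\sqrt{2}}$ rather than $\frac{1}{\sqrt{2T}}$, so each frame element $\psi_k = \frac{1}{\sqrt{2}}\exp(\ii \frac{k\pi}{T}x)$ equals $\sqrt{T}$ times the corresponding orthonormal basis element. Hence $|\langle f, \psi_k \rangle|^2 = T\, |\langle \tilde{f}, \frac{1}{\sqrt{2T}}\exp(\ii \frac{k\pi}{T}x)\rangle_{L^2[-T,T]}|^2$, and summing over $k \in \Z$ yields $\sum_{k} |\langle f, \psi_k \rangle|^2 = T \|f\|_{L^2[-1,1]}^2$, which is exactly the tight-frame condition with frame bound $T$. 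The phrase ``normalized tight frame'' here should be read as each $\psi_k$ having unit norm in $L^2[-1,1]$, which is immediate since $\|\frac{1}{\sqrt{2}}\exp(\ii\frac{k\pi}{T}x)\|_{L^2[-1,1]}^2 = \frac{1}{2}\int_{-1}^1 1\,\DX{x} = 1$; I would note this at the outset.

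Honestly, this is a soft statement and there is no real obstacle — the only thing to be careful about is the bookkeeping of which interval and which normalization each inner product lives on, and making sure the zero-extension step is stated cleanly (it relies on nothing more than $f$ vanishing outside $[-1,1]$, so restriction of the $L^2[-T,T]$ inner product to $[-1,1]$ loses nothing). One could alternatively phrase the whole argument as: the analysis operator of $\Phi$ restricted to $L^2[-1,1]$ is $\sqrt{T}$ times the restriction of the (unitary) Fourier analysis operator on $L^2[-T,T]$, composed with zero-extension, which is an isometry onto its range; squaring the constant gives the frame bound $T$. I would present the Parseval computation as the main line of the proof and relegate the normalization check to a single sentence.
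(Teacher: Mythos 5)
Your proposal is correct and follows essentially the same route as the paper's proof: extend $f$ by zero to $[-T,T]$, recognize the frame elements as $\sqrt{T}$ times the orthonormal Fourier basis on $[-T,T]$, and apply Parseval there to obtain the frame bound $T$. The only addition is your explicit check that each element has unit norm on $[-1,1]$, which the paper leaves implicit.
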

\begin{proof}
Let $f \in L^2[-1,1]$ be given and define $\tilde f \in L^2[-T,T]$ as the extension of $f$ by zero to $[-T,T]$.  Since
\begin{equation}
\label{E:akdef}
a_k=\frac{1}{\sqrt{2}} \int^{1}_{-1} f(x) \exp( \ii \tfrac{k \pi}{T} x ) \DX{x} = \frac{1}{\sqrt{2}} \int^{T}_{-T} \tilde f(x) \exp( \ii \tfrac{k \pi}{T} x ) \DX{x},
\end{equation}
we find that $\frac{1}{\sqrt{T}}a_k$ is precisely the $k$th Fourier coefficient of $\tilde f$ on $[-T,T]$.  By Parseval's relation
\[
\sum^{\infty}_{k=-\infty} | a_k |^2 = T \| \tilde f \|^2 = T \| f \|^2,
\]
as required.
\end{proof}
\noindent For a general introduction to the theory of frames, see~\cite{christensen2003introduction}.

Lemma \ref{lem:frame} has several implications. Recall that frames are usually redundant: any given function $f$ typically has infinitely many representations in a particular frame.  This implies that there will be approximate linear dependencies amongst the columns of the Gram matrix $A$ for all large $n$.  This makes $A$ ill-conditioned, and in the case of the frame~\eqref{E:fourframe}, leads to the aforementioned exponentially large condition numbers.

Let us consider several particular representations of a smooth function $f$ in the frame~\eqref{E:fourframe}.  Recall that associated to any frame $\{ f_k \}$ is the so-called canonical dual frame $\{ g_k \}$ \cite{christensen2003introduction}.  A representation of $f$, the \textit{frame decomposition}, is given in terms of this frame by
\[
f = \sum^{\infty}_{k = 1} \langle f , g_k \rangle f_k.
\]
Since the frame~\eqref{E:fourframe} is tight, it coincides with its canonical dual up to a constant factor that is equal to the frame bound, in this case $T$. Therefore its frame decomposition is precisely
\[
 \sum^{\infty}_{k=-\infty} \frac{1}{\sqrt{T}}a_k \frac{\exp( \ii \tfrac{k \pi}{T} x )}{\sqrt{2 T}} ,
\]
where the $a_k$'s are given by~\eqref{E:akdef}.  However, as illustrated in the proof of the previous lemma, this is precisely the Fourier series of the discontinuous function $\tilde f$, and thus this infinite series converges only slowly and suffers from a Gibbs phenomenon.  

On the other hand, as shown in the proof of Theorem~\ref{t:specconv}, by extending $f$ more smoothly to $[-T,T]$, one obtain representations of $f$ in the frame~\eqref{E:fourframe} that converge algebraically at arbitrarily fast rates.  Thus, we conclude the following: different frame expansions of the same function may give rise to wildly different approximations.

Clearly, given its exponential rate of convergence, the continuous Fourier extension $g_n$ will not coincide with any of the aforementioned representations (recall also that $g_n$ does not typically converge outside $[-1,1]$ \cite{huybrechs2010fourier}, and therefore its limit cannot be a frame expansion in general). More importantly, however, it is not certain that the solution of the linear system~\eqref{E:exactlinsys}, when computed in finite precision, will actually coincide with the continuous Fourier extension $g_n$ itself.  The reason is straightforward: for large $n$, $A$ is approximately underdetermined, and therefore there will be many approximate solutions to~\eqref{E:exactlinsys} corresponding to different frame expansions of $f$.  A typical linear solver will usually select an approximate solution with bounded coefficient vector $a$, and there is no guarantee that this need coincide with $g_n$.

This phenomenon has a significant potential consequence: since the numerically computed extension need not coincide with the continuous Fourier extension $g_n$, theoretical results for the behaviour of $g_n$ are not guaranteed to be inherited by the numerical solution.  Having said this, in numerical experiments, one typically witnesses exponential convergence, exactly as predicted by Theorem \ref{th:expconv}.  However, a difference can arise when investigating other properties, such as resolution power, as we discuss and explain in \S \ref{s:resolution}.

The above discussion is not intended to be rigorous.  A full analysis of the behaviour of `numerical' Fourier extensions is outside the scope of this paper.  It transpires that this can be done, and a complete analysis will appear in a future paper \cite{BADHJMVFEStability}.  Instead, in the remainder of this paper, we focus mainly on approximation-theoretic properties of theoretical extensions; resolution power, in particular.  Nonetheless, in \S \ref{ss:resolution_numerical} we revisit the issue of numerical extensions insomuch as it relates to resolution, and give an explanation of the phenomenon of  differing resolution power mentioned above.

\subsection{A discrete Fourier extension}\label{ss:discrete}
The continuous Fourier extension is defined by a least squares criterion, and as such it suffers from the drawback of requiring computation of the integrals $\langle f , \phi_k \rangle$.  To avoid this issue one may instead define a Fourier extension by a collocation condition.  In other words, if $\{ x_i \}^{2n+1}_{i=1}$ is a set of nodes in $[-1,1]$ we replace the matrix $A$ and the vector $B$ in~\eqref{E:exactlinsys} by $\tilde A$ and $\tilde B$ with entries $\phi_k(x_j)$ and $f(x_j)$ respectively.  We then define $\tilde A a = \tilde B$ once more.

The key question is how to determine good nodes.  Recall that $g_n$, as defined by~\eqref{E:ls}, is essentially a sum of two polynomial approximations in the variable $y \in [c(T),1]$.  The polynomial interpolant of an analytic function in $y$ at Chebyshev nodes (appropriately scaled to the interval $[c(T),1]$) converges exponentially fast at the same rate $\rho$ as its orthogonal polynomial expansion.  Therefore, such nodes, when mapped to the original domain via $x = \frac{T}{\pi} \cos^{-1} y$, will ensure exponential convergence at rate $E(T)$ of the resulting Fourier extension.

It is now slightly easier to redefine $G_n$ to be the space of dimension $2n+2$ spanned by the functions $\{ \CKPTX \}^{n}_{k=0}$ and $\{ \SKPTX \}^{n+1}_{k=1}$.  The $2n+2$ collocation nodes in $[-1,1]$ therefore take the form $\{ x_i \}^{n}_{i=0} \cup \{ -x_i \}^{n}_{i=0}$, where
\begin{equation}\label{E:chebnodes}
x_i = \frac{T}{\pi} \cos^{-1} \left [ \frac{1}{2} (1-c(T)) \cos \left ( \frac{(2i+1) \pi}{2n+2} \right ) + \frac{1}{2}(1 + c(T)) \right ],\quad i=0,\ldots,n.
\end{equation}
Recall that $c(T) = \cos \frac{\pi}{T}$.  We refer to such nodes as \textit{symmetric mapped Chebyshev} nodes, and the corresponding extension $g_n$ as the \textit{discrete} Fourier extension of the function $f$.

Besides removing the requirement to compute the integrals $\langle f , \phi_n \rangle$, this choice of nodes also has the significant effect of improving conditioning, as we now explain.  We first note the following:
\begin{lemma}\label{lem:collocation_galerkin}
Let $\tilde A$ be the collocation matrix based on the symmetric mapped Chebyshev nodes~\eqref{E:chebnodes}, and suppose that $D$ is the diagonal matrix of weights $\omega_i = \frac{\pi}{n+1}$.  Then the matrix $A_W = \tilde A^{\top} D \tilde A$ has entries 
\[
\langle \phi_k , \phi_j \rangle_{W} = \int^{1}_{-1} \phi_{j}(x) \phi_{k}(x) W(x)  \DX{x},\quad j,k=1,\ldots,2n+2,
\]
where $W$ is the positive, integrable function given by
\[
W(x) = \frac{2 \pi}{T}  \frac{ \cos \frac{\pi}{2T} x} { \sqrt{ \CPTX - \CPT }}.
\]
\end{lemma}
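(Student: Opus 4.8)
The plan is to recognize the claimed identity as a Gauss--Chebyshev quadrature statement transported through the cosine map $y = \cos\frac{\pi}{T}x$. First I would observe that the symmetric mapped Chebyshev nodes~\eqref{E:chebnodes} are precisely the images under $x = \frac{T}{\pi}\cos^{-1}y$ of the scaled Chebyshev points $y_i = m(t_i)$ where $t_i = \cos\frac{(2i+1)\pi}{2n+2}$ are the usual Chebyshev nodes on $[-1,1]$ and $m$ is the affine map from~\eqref{E:m}, together with their negatives $-x_i$. The key fact I would invoke is the classical Gauss--Chebyshev rule: for any polynomial $p$ of degree $\leq 2n+1$,
\[
\int_{-1}^{1} p(t)\,\frac{\DX{t}}{\sqrt{1-t^2}} = \frac{\pi}{n+1}\sum_{i=0}^{n} p(t_i).
\]
Rescaling via $y = m(t)$, $t = m^{-1}(y)$, this becomes an exact quadrature on $[c(T),1]$ with the weight $\frac{1}{\sqrt{1-y^2}}$ up to Jacobian factors, with nodes $y_i$ and equal weights $\frac{\pi}{n+1}$.

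Next I would compute the $(j,k)$ entry of $A_W = \tilde A^{\top} D \tilde A$ directly: it equals $\sum_i \omega_i\,\phi_j(x_i)\phi_k(x_i) + \sum_i \omega_i\,\phi_j(-x_i)\phi_k(-x_i)$ over the full node set $\{x_i\}\cup\{-x_i\}$. Since each $\phi_\ell$ is a trigonometric function $\cos\frac{\pi}{T}\ell x$ or $\sin\frac{\pi}{T}\ell x$, the product $\phi_j\phi_k$ is, after the substitution $y=\cos\frac{\pi}{T}x$, either a polynomial in $y$ of degree $\leq 2n+1$ (cosine--cosine or sine--sine, the latter carrying a factor $1-y^2$) or such a polynomial times $\sqrt{1-y^2}$ times a sign factor (cosine--sine). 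Using the symmetry of the node set under $x\mapsto -x$ and the parity of these trigonometric products, the mixed cosine--sine terms cancel in the sum over $\pm x_i$, and for the surviving terms the exact Gauss--Chebyshev rule applies. This turns the finite sum into the integral $\frac{2T}{\pi}\int_{c(T)}^1 \phi_j\phi_k\,\frac{\DX{y}}{\sqrt{1-y^2}}$ (the factor $2$ from combining $x_i$ and $-x_i$, i.e. doubling the $[0,1]$ range to $[-1,1]$ in $x$), after accounting for the Jacobian constant $\frac{2}{1-c(T)}$ from $dt = \frac{2}{1-c(T)}dy$ and the constant $\frac{\pi}{n+1}$ from the weights; these constants should combine to give exactly the prefactor $\frac{2\pi}{T}$ once one also changes variables back from $y$ to $x$.

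Finally I would undo the substitution, writing $\DX{y} = -\frac{\pi}{T}\sin\frac{\pi}{T}x\,\DX{x}$ and $\sqrt{1-y^2} = \sin\frac{\pi}{T}x$ for $x\in[0,1]$, so $\frac{\DX{y}}{\sqrt{1-y^2}} = -\frac{\pi}{T}\DX{x}$; reconciling the bookkeeping so that the integral over $y\in[c(T),1]$ becomes $\int_{-1}^1 \phi_j(x)\phi_k(x) W(x)\DX{x}$ forces $W(x) = \frac{2\pi}{T}\,\frac{\cos\frac{\pi}{2T}x}{\sqrt{\CPTX - \CPT}}$. The appearance of $\cos\frac{\pi}{2T}x$ in the numerator and $\sqrt{\CPTX - \CPT}$ in the denominator comes from factoring $\sqrt{1 - y^2} = \sqrt{(1-y)(1+y)}$ together with the half-angle identity $1 - \cos\frac{\pi}{T}x = 2\sin^2\frac{\pi}{2T}x$ and the constants from $m$; I expect the main obstacle to be exactly this constant-chasing: keeping straight the Jacobian of $m$, the Jacobian of the cosine map, the quadrature weight $\frac{\pi}{n+1}$, and the factor $2$ from the $\pm$ symmetrization, so that they collapse to the clean form $\frac{2\pi}{T}$. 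Positivity and integrability of $W$ on $[-1,1]$ are then immediate since $\cos\frac{\pi}{2T}x > 0$ and $\CPTX - \CPT > 0$ for $x\in(-1,1)$ and $T>1$, with only an integrable inverse-square-root singularity at the endpoints.
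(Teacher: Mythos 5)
Your proposal is correct and follows essentially the same route as the paper's proof: use the $\pm$ symmetry of the node set, recognize the resulting sum as the $(n+1)$-point Gauss--Chebyshev rule on $[c(T),1]$ applied to the polynomial products $T_jT_k$ (resp.\ $U_jU_k(1-y^2)$), and map back through $y=\cos\frac{\pi}{T}x$ with the half-angle identity to read off $W$. Your explicit parity cancellation for the mixed cosine--sine blocks is a detail the paper leaves implicit, but otherwise the two arguments coincide.
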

\begin{proof}
Note that $A_{W}$ is a block matrix with blocks corresponding to inner products of sines and cosines with sines and cosines.  Consider the upper left block of the matrix $\tilde{A}^{\top} D \tilde{A}$.  In the $(j,k)$th entry, using the symmetry of the collocation points, we have that 
\[ 
2 \sum^{n}_{i=0} \omega_i \phi_{j}(x_i) \phi_k(x_i) = 2 \sum^{n}_{i=0} \omega_i T_j(y_i) T_k(y_i),
\]
where $\{ y_i \}^{n}_{i=0}=\cos(\frac{\pi}{T} x_i)$ are Chebyshev nodes on $[c(T),1]$.  Recall that $m(t)$ is the affine map from $[-1,1]$ to $[c(T),1]$, as defined in \S\ref{sss:convergence_exact}.
Since the product $T_j(y) T_k(y)$ is a polynomial in $y$ of degree at most $2n$, the Gaussian quadrature rule associated with the points $x_i$ is exact and it follows that
\begin{align*}
 2\sum^{n}_{i=0} \omega_i \phi_{j}(x_i) \phi_k(x_i) &= 2\int^{1}_{-1} T_j(m(t)) T_k(m(t)) \frac{1}{\sqrt{1-t^2}} \DX{t} \\
&= 2 \int^{1}_{c(T)} T_j(y) T_k(y) w(y) \DX{y},
\end{align*}
where $w(y)$ is given by
\[
w(y) = \frac{2}{1-c(T)} \frac{1}{\sqrt{ 1 - \left ( \frac{2y - 1 - c(T)}{1-c(T)} \right )^2 }}.
\]
The first factor is the Jacobian of the mapping to $[c(T),1]$, the second factor is the scaling of the standard Chebyshev weight to that interval. The change of variables $y = \CPTX$ now gives 
\[
2 \sum^{n}_{i=0} \omega_i \phi_{j}(x_i) \phi_k(x_i) = 2 \frac{\pi}{T} \int^{1}_{0} \phi_j(x) \phi_k(x) w(\cos \tfrac{\pi}{T} x) \sin \tfrac{\pi}{T} x   \DX{x},
\]
which is easily found to coincide with $\langle \phi_k , \phi_j \rangle_{W}$.  The case corresponding to the sine functions $\SKPTX$ is identical.
\end{proof}

This lemma demonstrates that the normal equations for the discrete Fourier extension are the equations of a continuous Fourier extension based on the weighted $L^2_{W}$ optimization problem.  It is  well-known that forming normal equations leads to worse numerical results \cite[Ch.19]{lawson1996leastsquares}, and exactly the same is true in this case.  Indeed, since $W$ is an integrable weight function, we may expect that $\kappa(A_W) \approx E(T)^{2n}$, i.e. exactly as in the unweighted case.  Thus collocation leads to the significant reduction in condition number, with $\kappa(\tilde A) \approx E(T)^n$ as opposed to $E(T)^{2n}$.  As a result, one typically observes a much higher accuracy, $\BIGO(\epsilon)$ as opposed to $\BIGO(\sqrt{\epsilon})$, with this approach.

This improvement is illustrated in Fig.~\ref{fig:methods} for the example $f(x) = \cos 16 x$.  As shown in the left panel, the best attainable error with the continuous Fourier extension is roughly $10^{-8}$, whereas the discrete extension obtains much closer to machine epsilon.  The right panel indicates the significantly milder growth in condition number.

\begin{figure}[t]
\begin{center}
  \subfigure[Approximation error]{
    \includegraphics[width=5cm]{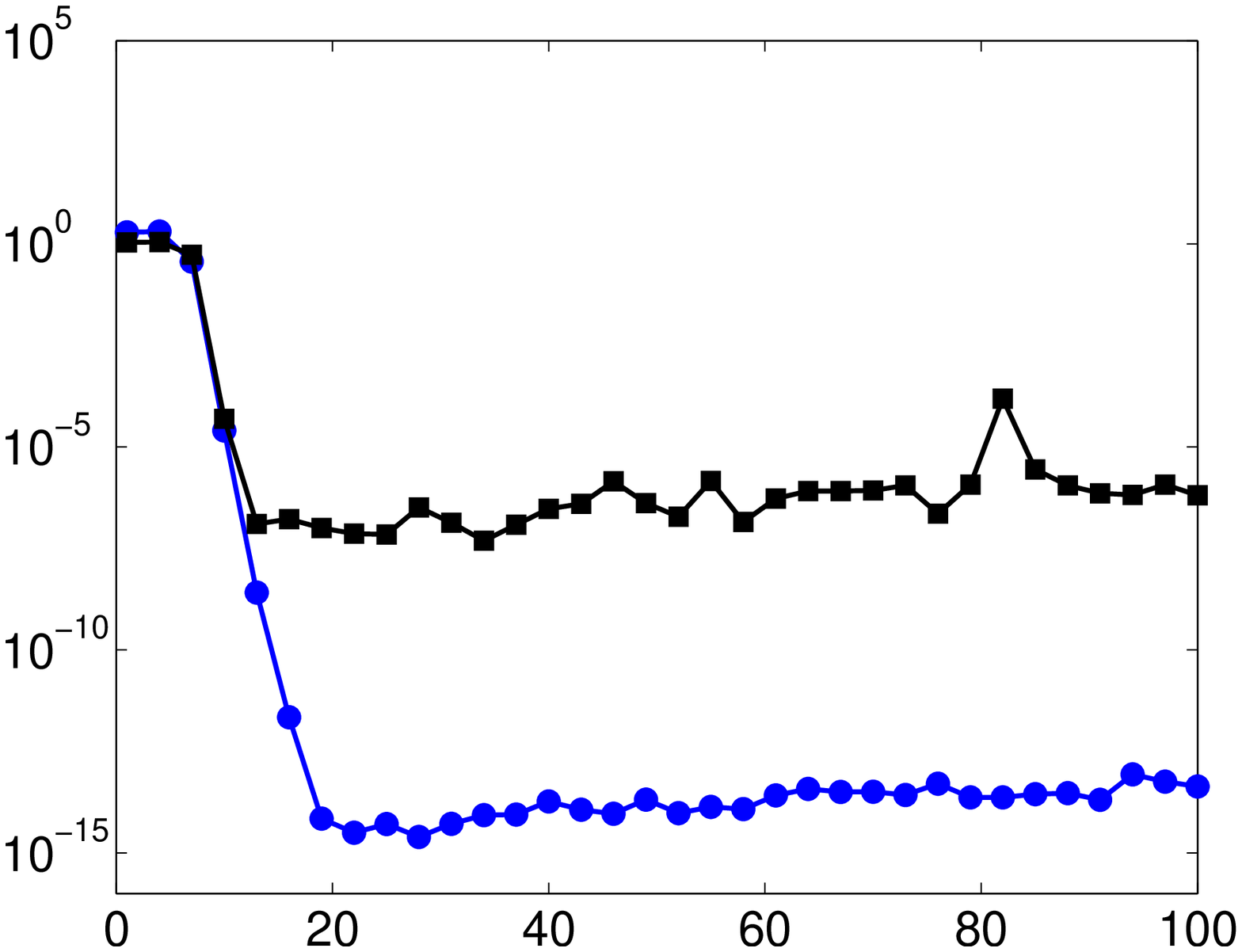}
  }
  \subfigure[Condition number]{
    \includegraphics[width=5cm]{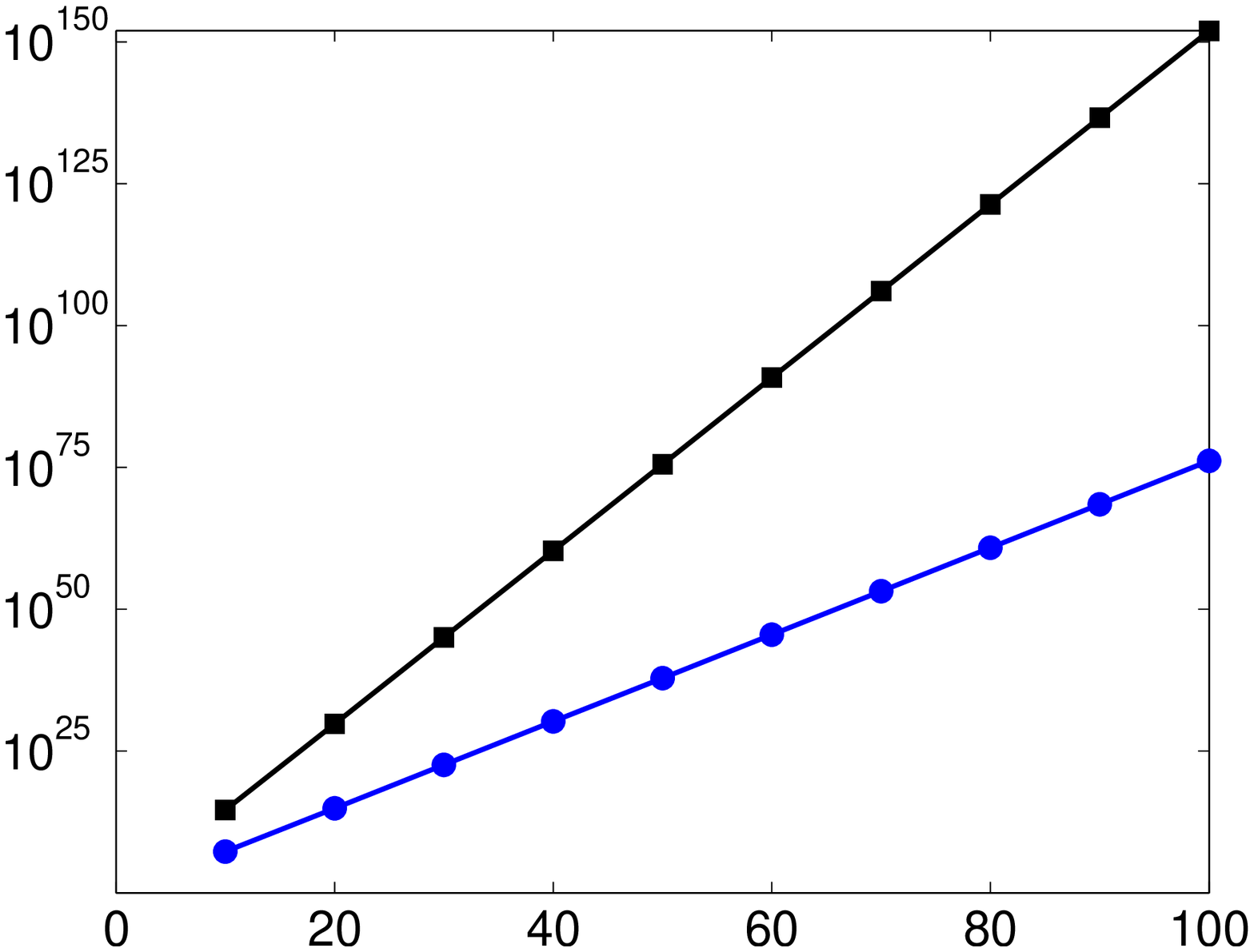}
  }
 \caption{The approximation errors $\| f - g_n \|_{L^\infty_{[-1,1]}}$ (left panel) and condition numbers $\kappa(A)$ (right panel) for the continuous (squares) and discrete (circles) Fourier extensions with $T=2$ and $f(x)=\cos 16 x$.}\label{fig:methods}
\end{center}
\end{figure}

Such an improvement is by no means unique to this choice of nodes.  A suitable alternative choice of collocation points follows from using the roots of the polynomials $T_{k}^T(y)$. With an appropriate number of points, the matrix $\tilde{A}^T D \tilde{A}$, with $D$ containing the weights of the associated Gaussian quadrature rule on its diagonal, is precisely the matrix of the unweighted continuous Fourier extension, rather than the weighted extension that was identified in Lemma~\ref{lem:collocation_galerkin}.  The use of these points as Gaussian quadrature was already explored in~\cite{huybrechs2010fourier}. However, these points depend on $T$ in a non-trivial way and, although they can easily be computed, are not available in closed form. As numerical experiments indicate that the performance of such point sets is not significantly better than the points proposed in this section, we forego a more detailed analysis. 

This aside, we remark that, in some senses, the continuous and discrete Fourier extensions are analogous to the orthogonal expansion of a function in Chebyshev polynomials and its Chebyshev polynomial interpolant (sometimes known as the discrete Chebyshev expansion).  Specifically, the discrete versions both arise by essentially replacing an inner product by a quadrature.  However, the correspondence is not exact, since in the discrete Fourier extension the quadrature approximates the weighted inner product on $L^2_W[-1,1]$.  As mentioned, this is done for computational convenience. Nonetheless, the approximation-theoretic properties of the discrete and continuous Fourier extensions are nearly identical, with the only key difference being that of the condition number.

We have purposefully kept this section on numerical computation of the continuous and discrete Fourier extensions short.  The main conclusion is that, despite quite severe ill-conditioning (even in the discrete case we have exponentially large condition numbers), one can typically obtain very high accuracies (i.e. close to machine precision) with the discrete Fourier extension.  Although this apparent contradiction can be comprehensively explained, it is beyond the scope of this paper, and will be reported in a subsequent work \cite{BADHJMVFEStability}.

\section{Resolution power of polynomial expansions}
\label{s:OPSresolution}
Having introduced and analyzed the continuous and discrete Fourier extensions, and discussed their numerical implementation, in \S\ref{s:resolution} we establish their resolution power.  Before doing so, since similar techniques will be used subsequently, we first derive the well-known result for orthogonal polynomial expansions.

Suppose that $f(x)$ is defined on $[-1,1]$ and is analytic in a neighbourhood thereof.  Let $f_{n} \in \mathbb{P}_{n}$ be its $n$-term expansion in orthogonal polynomials with respect to the positive and integrable weight function $w(x)$.  If $L^{2}_{w}[-1,1]$ is the space of weighted square integrable functions with respect to $w$, then $f_n$ is given by
\begin{equation*}
f_n : =  \underset{p \in \mathbb{P}_n}{\operatorname{argmin}}  \Vert f-p \Vert_{L^2_{[-1,1],w}}.
\end{equation*}
In particular, for any $p_n \in \mathcal{P}_{n}$, 
\begin{equation*}
\Vert f-f_n \Vert^2_{L^2_{[-1,1],w}} \leq \Vert f-p_n \Vert^2_{L^2_{[-1,1],w}} \leq \| w \|_{L^1_{[-1,1]}} \Vert f-p_n \Vert^2_{L^\infty_{[-1,1]}} .
\end{equation*}
Hence, we note that to study of the resolution power of any orthogonal polynomial expansion it suffices to consider only one particular example.  Without loss of generality, we now focus on expansions in Chebyshev polynomials (i.e. $w(x) = (1-x^2)^{-\frac{1}{2}}$), in which case
\begin{equation}\label{E:chebyshev_expansion}
p_n(x) = \sum^{n}_{k=0} ~' a_k T_k(x),\qquad f(x) = \sum_{k=0}^\infty ~' a_k T_k(x),
\end{equation}
where
\begin{equation}\label{E:expansion_coefficients}
 a_k= \int_0^\pi f(\cos \theta) \cos k \theta \DX{\theta},
\end{equation}
and $'$ indicates that the first term of the sum should be halved.  Since the infinite sum (\ref{E:chebyshev_expansion}) converges uniformly, we have the estimate
\[
\Vert f-p_n \Vert_{L^\infty_{[-1,1]}} \leq \sum_{k > n} | a_k |.
\]
Therefore it suffices to examine the nature of the coefficients $a_n$ for the function
\begin{equation}\label{E:test_f}
 f(x) = \exp( \ii \pi \omega x).
\end{equation} 
We use the following standard estimate, given in~\cite[p.175]{rivlin1990chebyshev}:
\begin{equation}\label{E:chebyshev_bound}
 |a_n| \leq \frac{2M_{\rho}}{\rho^n}.
\end{equation}
Here $\rho$ corresponds to any Bernstein ellipse $e(\rho)$ in which $f$ is analytic and $M_{\rho}$ is the maximum of $|f(z)|$ along that ellipse.

For a given $\omega$ and $n$, we consider the minimum of all bounds of the form~\eqref{E:chebyshev_bound}.  Since $f$ is a complex exponential, $f$ reaches a maximum at the point on the ellipse $e(\rho)$ with the smallest (negative) imaginary part. This corresponds to $\theta=-\pi/2$ and thus we have
\begin{equation}
 M_\rho = \exp\left( \pi \omega \frac{\rho^2-1}{2\rho} \right).
\end{equation}
The bound~\eqref{E:chebyshev_bound} now becomes
\begin{equation}\label{E:resolution_bound}
 B(\omega,n,\rho) := \frac{2M_\rho}{\rho^n} = \frac{2\exp\left( \pi \omega \frac{\rho^2-1}{2\rho} \right)}{\rho^n}.
\end{equation}
For fixed $\omega$ and $n$, we find the minimal value of this bound by differentiating~\eqref{E:resolution_bound} with respect to $\rho$. Equating the derivative to zero
\begin{equation}\label{E:rootfinding}
 \frac{d}{d\rho} B(\omega,n,\rho) = 0,
\end{equation}
yields two solutions
\begin{equation}\label{E:solution}
 \frac{n \pm \sqrt{n^2-\pi^2 \omega^2}}{\pi \omega}.
\end{equation}
Consider the case $n < \pi \omega$. Both solutions of~\eqref{E:rootfinding} are complex-valued. Note that
\[
B(\omega,n,1)=2,
\]
and
\[
 \frac{d}{d\rho} B(\omega,n,1) = 2\pi \omega - 2n.
\]
It follows that $B$ is strictly increasing as a function of $\rho$. For $n < \pi \omega$, the best possible bound for $a_n$ of the form~\eqref{E:chebyshev_bound} is $2$.

Consider next the case $n > \pi \omega$. Since it is easily seen that the roots~\eqref{E:solution} are inverses of each other, we restrict our attention to the one greater than $1$, i.e.,
\[
 \rho_{min} := \frac{n + \sqrt{n^2-\pi^2 \omega^2}}{\pi \omega}.
\]
Since $B(\omega,n,\rho)$ initially decays at $\rho=1$, $\rho_{min}$ is the unique minimum of the bound for $\rho > 1$. Thus,
\[
 B(\omega,n,\rho_{min}) < B(\omega,n,\rho), \qquad \rho \in [1,\infty).
\]
In particular, this implies exponential decay of the next coefficients:
\[
 |a_{n+k}| \leq B(\omega,n,\rho_{min}) \frac{1}{\rho_{min}^k} < \frac{2}{\rho_{min}^k}.
\]
Finally, consider the value $n=\pi \omega$. Then $\rho=1$ is a global minimum of the bound~\eqref{E:resolution_bound}, since 
\[
 \frac{d}{d\rho} B(\omega,\pi \omega,1) = \frac{d^2}{d\rho^2} B(\omega,\pi \omega,1) = 0,
\]
and
\[
 \frac{d^3}{d\rho^3} B(\omega,\pi \omega,1) = 2\pi \omega > 0.
\]
In conclusion, we have seen that for an oscillatory function of the form~\eqref{E:test_f}, we need $n=\pi \omega$ degrees of freedom before exponential decay of the coefficients $a_n$ sets in. This corresponds to $\pi$ degrees of freedom per wavelength -- the well-known result on the resolution power of polynomials.

Note that this value was previously derived in \cite{GottGibbs2} for orthogonal polynomial expansions corresponding to the Gegenbauer weight $w(x) = (1-x^2)^{\lambda-\frac{1}{2}}$, $\lambda > - \frac{1}{2}$ (see also \cite[p.35]{naspec} for $\lambda = 0$).  This result was based on explicit expressions for the Gegenbauer polynomial coefficients of $e^{\ii x t}$. The previous arguments generalize this result to arbitrary weight functions $w(x)$.    In fact, we could have also used the explicit formula for the Chebyshev polynomial expansion of $e^{\ii x t}$ to derive estimates similar to those given above (and possibly more accurate).  However, we shall use similar techniques to those presented here in the next section, where such explicit expressions are not available.

\section{Resolution power of Fourier extensions}\label{s:resolution}
We now consider the resolution power of the continuous and discrete Fourier extensions. As commented in \S \ref{ss:convergence_numerical}, the continuous/discrete Fourier extension $g_n$ need not be realized in a finite precision numerical computation.  Hence, we divide this section between theoretical estimates for the resolution power of the continuous/discrete extension, and its numerical realization.

\subsection{Resolution power of the continuous Fourier extension}\label{ss:resolution_exact}
A na\"ive estimate for the resolution constant $r(T)$ follows immediately from the bound (\ref{specineq}) in Theorem \ref{t:specconv}.  Indeed, for $f(x) = \exp(i \omega \pi x)$ we have
\[
\| f \|_{H^{k}_{[-1,1]}} = \BIGO{(\omega \pi)^{k}},\quad k=1,2,\ldots ,
\]
and therefore $r(T)$ satisfies $r(T) \leq 2T$, with spectral convergence occurring once $n$ exceeds $\omega T$.  

On first viewing, this estimate seems plausible.  For example, consider the special situation where the frequency of oscillation $\omega$ is an integer multiple of $T^{-1}$.  Then the function $f(x)= \exp(\ii \frac{\pi }{T} m x)$ is precisely the $m$th complex exponential in the Fourier basis on $[-T,T]$.  Given that the continuous Fourier extension $g_n$ of $f$ is error minimizing amongst all functions in $G_n$, we can expect $f$ to be recovered perfectly (i.e. $g_n \equiv f$) by its continuous Fourier extension whenever $n \geq m = \omega T$.  Thus, for oscillations at frequencies $\omega = \frac{m}{T}$, $m \in \Z$, the estimate $r(T) \leq 2T$ appears correct.  

However, empirical results indicate that such an estimate is only accurate for small $T$: for large $T$ it transpires that $r(T) \sim \pi$.  We shall prove this result subsequently.  Before doing so, however, let us note the following unexpected conclusion: when $T$ is large, we can actually resolve the oscillatory exponentials $\exp(\ii \frac{\pi}{
T} m x)$ accurately on $[-1,1]$ using Fourier extensions comprised of relatively non-oscillatory exponentials $\exp(\ii \frac{\pi}{T} n x)$, $n \ll m$.

 To obtain an accurate estimate for $r(T)$, we need to argue along the same lines as \S\ref{ss:convergence_exact} and exploit the close relationship between Fourier extensions and certain orthogonal polynomial expansions.  Recall that the theory of \S\ref{ss:convergence_exact} treats even and odd cases separately.  Let us first assume that $f$ is even, so that
\[
f(x) = \cos \omega \pi x
\]
(we consider the odd case later).  Upon applying the transformation $x = \frac{T}{\pi} \cos^{-1} y$ we obtain
\[
f_{1}(y) = \cos \omega T \cos^{-1} y,\quad y \in [c(T),1].
\]
The Fourier extension of $f(x)$ is precisely the expansion of $f_{1}(y)$ in the orthogonal polynomials $T^{T}_{k}(y)$.  If we now map the domain $[c(T),1]$ to $[-1,1]$ via
\[
u = \frac{c(T)+1-2y}{c(T)-1},\quad y = \frac{u}{2}(1-c(T)) + \frac{1}{2}(1+c(T)),
\]
then this equates to an orthogonal polynomial expansion of the function
\begin{equation}
\label{E:f2udef}
f_{2}(u) = \cos \left [ \omega T \cos^{-1} \left ( \frac{u}{2}(1-c(T)) + \frac{1}{2}(1+c(T)) \right ) \right ],\quad u \in [-1,1].
\end{equation}
Thus, as in \S\ref{s:OPSresolution}, to determine the resolution power of $g_n$, it suffices to consider the expansion of $f_{2}(u)$ in Chebyshev polynomials on $[-1,1]$.

In view of the bound (\ref{E:chebyshev_bound}), we now seek the maximum value of $f_{2}(u)$ along the Bernstein ellipse $e(\rho)$.  We first require the following lemma:

\begin{lemma}
For $\rho<E(T)$ and sufficiently large $\omega$, the maximum value of $|f_2(u)|$ on the Bernstein ellipse $e(\rho)$ occurs at $\theta = 0$.
\end{lemma}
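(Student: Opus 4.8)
\emph{Proof idea.} The plan is to reduce the statement to a geometric fact about a single ellipse and then settle that by a short computation with arguments of complex numbers. Parametrise $e(\rho)$ by $u(\theta)=\tfrac12(\rho^{-1}e^{-\ii\theta}+\rho e^{\ii\theta})=\cos(\theta-\ii\log\rho)$, and write $f_2(u(\theta))=\cos\!\bigl(\omega T\,w(\theta)\bigr)$, where $w(\theta)$ is a continuous branch of $\cos^{-1}\!\bigl(m(u(\theta))\bigr)$ along the path and $m(u)=\tfrac12(1-c(T))u+\tfrac12(1+c(T))$ is the affine rescaling appearing in \eqref{E:f2udef}; such a branch exists because, for $\rho<E(T)$, the ellipse $e(\rho)$ avoids the cut that $f_2$ inherits from the inverse cosine. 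Since $|\cos(p+\ii q)|^{2}=\cos^{2}p+\sinh^{2}q$, setting $M(\theta):=|\operatorname{Im}w(\theta)|$ yields $|f_2(u(\theta))|\le\cosh\!\bigl(\omega T\,M(\theta)\bigr)$, with equality whenever $\operatorname{Re}w(\theta)\in\pi\Z$. At $\theta=0$ the value $m(u(0))$ is real and strictly greater than $1$, so $w(0)$ is purely imaginary and equality holds: $|f_2(u(0))|=\cosh(\omega T\,M(0))$. It therefore suffices to prove $M(\theta)<M(0)$ for every $\theta\in(0,\pi]$, the range $\theta\in[-\pi,0)$ following from $w(-\theta)=\overline{w(\theta)}$; this bound in fact yields the conclusion for every $\omega>0$.

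Next I would change variables. Put $\alpha=\tfrac{\pi}{2T}$, so that $c(T)=\cos2\alpha$ and $m(u)=\sin^{2}\alpha\,u+\cos^{2}\alpha$, and set $Z(\theta):=\sin\alpha\,\sin\tfrac{\theta-\ii\log\rho}{2}$. A short trigonometric computation gives $m(u(\theta))=1-2Z(\theta)^{2}$, hence $w(\theta)=\cos^{-1}\!\bigl(1-2Z(\theta)^{2}\bigr)=2\arcsin Z(\theta)$ on the branch continuing the value at $\theta=0$, so $M(\theta)=2\,|\operatorname{Im}\arcsin Z(\theta)|$. As $\theta$ ranges over $[0,\pi]$, $Z(\theta)$ traces a quarter of the ellipse with foci $\pm\sin\alpha$ lying in the closed fourth quadrant, from $Z(0)=-\ii\sin\alpha\sinh(\tfrac12\log\rho)$ (purely imaginary) to $Z(\pi)=\sin\alpha\cosh(\tfrac12\log\rho)$ (real). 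Here the hypothesis enters: $\rho<E(T)=\cot^{2}(\tfrac{\alpha}{2})$ is exactly $\sin\alpha\cosh(\tfrac12\log\rho)<1$, i.e. $Z(\pi)\in(-1,1)$, so this arc stays clear of the cuts $(-\infty,-1]\cup[1,\infty)$ of $\arcsin$; and since $\arcsin$ carries the lower half-plane into $\{\operatorname{Im}<0\}$, we have $\operatorname{Im}\arcsin Z(\theta)\le0$. It remains to show $\operatorname{Im}\arcsin Z(\theta)$ is increasing on $[0,\pi]$.

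The crux is the sign of the derivative. With $P(\theta):=\cos\tfrac{\theta-\ii\log\rho}{2}$ one has $Z'(\theta)=\tfrac12\sin\alpha\,P(\theta)$ and $1-Z(\theta)^{2}=\cos^{2}\alpha+\sin^{2}\alpha\,P(\theta)^{2}$, so
\begin{equation*}
\frac{d}{d\theta}\arcsin Z(\theta)=\frac{\tfrac12\sin\alpha\,P(\theta)}{\sqrt{\cos^{2}\alpha+\sin^{2}\alpha\,P(\theta)^{2}}},
\end{equation*}
the square root being the one with positive real part (namely $\cos(\arcsin Z(\theta))$). For $\theta\in(0,\pi)$ the point $P(\theta)$ lies in the open first quadrant, with $\arg P(\theta)=\psi\in(0,\tfrac{\pi}{2})$. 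Then $\sin^{2}\alpha\,P(\theta)^{2}$ has argument $2\psi\in(0,\pi)$, and adding the positive real number $\cos^{2}\alpha$ strictly decreases the argument, so $\arg\bigl(\cos^{2}\alpha+\sin^{2}\alpha\,P(\theta)^{2}\bigr)<2\psi$, its principal square root has argument $<\psi$, and the quotient above has argument in $(0,\tfrac{\pi}{2})$ — in particular strictly positive imaginary part. Hence $\operatorname{Im}\arcsin Z(\theta)$ is strictly increasing on $(0,\pi)$, $M(\theta)$ is strictly decreasing there, and since $M(0)>0$ we get $M(\theta)<M(0)$ on $(0,\pi]$, as required.

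I expect the main difficulty to be bookkeeping rather than any hard estimate: choosing the branch of $\arcsin$ — equivalently, the square root — consistently along the curve, verifying the identity $\cos^{-1}(1-2Z^{2})=2\arcsin Z$ on the relevant sheet, and using that $f_2$ is single-valued and analytic inside $e(\rho)$ for $\rho<E(T)$, which rests on the removability at $m(u)=1$ of the singularity the inverse cosine would otherwise introduce (as noted in \S\ref{sss:convergence_exact}), so that the only obstruction is at $m(u)=-1$. The hypothesis $\rho<E(T)$ is used at exactly one point, to keep the arc $Z([0,\pi])$ away from the cuts of $\arcsin$; everything else is routine trigonometry together with the one-line argument-of-a-sum inequality above.
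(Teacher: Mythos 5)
Your proof is correct, but it follows a genuinely different route from the paper's. The paper first reduces the problem (for large $\omega$) to maximizing $|\Im|$ of the inverse-cosine argument, then writes $\cos[X(\theta)+\ii Y(\theta)]=A(\theta)+\ii B(\theta)$, sets $Z(\theta)=\cosh^2 Y(\theta)$, and performs an implicit critical-point analysis: differentiating the quadratic relation satisfied by $Z$, it rules out any interior critical point $\theta_0\neq 0,\pm\pi$ by deriving a contradiction, and then compares the endpoints. You instead make the half-angle substitution $w(\theta)=2\arcsin\bigl(\sin\tfrac{\pi}{2T}\,\sin\tfrac{\theta-\ii\log\rho}{2}\bigr)$, bound $|f_2|$ by $\cosh(\omega T\,|\Im w(\theta)|)$ with exact equality at $\theta=0$, and prove strict monotonicity of $\Im\arcsin Z(\theta)$ on $(0,\pi)$ by an argument-of-a-sum inequality for the explicit derivative. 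I checked the key identities ($m(u(\theta))=1-2Z(\theta)^2$, the equivalence $\rho<E(T)\Leftrightarrow\sin\tfrac{\pi}{2T}\cosh(\tfrac12\log\rho)<1$, the location of $Z([0,\pi])$ in the fourth quadrant, and the sign of the derivative) and they hold; moreover your worry about branch bookkeeping is harmless, since every branch of $\cos^{-1}(m(u))$ has the same $|\Im|$, which is all the $\cosh$ bound uses, and equality at $\theta=0$ follows directly from the real continuation $\cosh[\omega T\cosh^{-1}(m(u))]$ past $u=1$. What your approach buys: it dispenses with the hypothesis of sufficiently large $\omega$ (the conclusion holds for every $\omega>0$), makes transparent exactly where $\rho<E(T)$ enters (keeping the arc away from the cuts of $\arcsin$), and gives strict decrease of the maximum along the quarter-ellipse; the paper's argument avoids any branch discussion but needs the large-$\omega$ reduction and a longer case-by-case contradiction. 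One cosmetic slip: equality in $|f_2|\le\cosh(\omega T M(\theta))$ requires $\omega T\,\Re w(\theta)\in\pi\Z$, not $\Re w(\theta)\in\pi\Z$; since you only invoke it at $\theta=0$ where $\Re w=0$, nothing is affected.
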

\begin{proof}
Let $z=x+\ii y$.  Then 
\[
| \cos \omega z |^2 = \frac{1}{2} \left ( \cos 2 \omega x + \cosh 2 \omega y \right ).
\]
Hence, for sufficiently large $\omega$, the maximal value of $| \cos \omega z |$ on some curve $C$ in the complex plane occurs at the point $z \in C$ where $|\Im z|$ is maximized.

When $u = \frac{1}{2} (\rho^{-1} e^{-\ii \theta} + \rho e^{\ii \theta} )$, we may write $f_{2}(u) = \cos \omega T [X(\theta)+\ii Y(\theta) ]$, where $X(\theta)$ and $Y(\theta)$ are defined by
\[
\cos \left [ X(\theta)+\ii Y(\theta) \right ] = \frac{u}{2}(1-c(T)) + \frac{1}{2}(1+c(T)) = A(\theta)+\ii B(\theta),
\]
and, letting $\rho = e^{\eta}$ for $\eta > 0$ (recall that $\rho > 1$), 
\[
A(\theta) = \frac{1}{2}(1-c(T)) \cosh \eta \cos \theta + \frac{1}{2}(1+c(T)),\quad B(\theta) = \frac{1}{2}(1-c(T)) \sinh \eta \sin \theta.
\]
Expanding the cosine, and equating real and imaginary parts, we find that
\[
\cos X(\theta) \cosh Y(\theta)  = A(\theta),\quad \sin X(\theta) \sinh Y(\theta)  = B(\theta).
\]
We seek to maximize $|Y(\theta)|$.  Note trivially that $Y(\theta) $ cannot vanish identically for all $\theta$, and thus it suffices to consider only those $\theta$ for which $Y(\theta) > 0$.  Let $Z(\theta) = \cosh^2 Y(\theta)$.  Then $Z(\theta)$ is defined by
\[
\frac{A^2(\theta)}{Z(\theta)} + \frac{B^2(\theta)}{Z(\theta)-1} = 1.
\]
Rearranging,
\[
Z^2(\theta) - (1+A^2(\theta) + B^2(\theta) ) Z(\theta) + A^2(\theta) = 0.
\]
To complete the proof, it suffices to show that $Z$ attains its maximum value at $\theta = 0$.  Suppose not.  Then $Z'(\theta_0) = 0$ for some $\theta_0 \neq 0 $ (with $Z(\theta_0) >1$), and, after differentiating the above expression, we obtain 
\begin{equation*}
\left ( A'(\theta_0) A(\theta_0) + B'(\theta_0) B(\theta_0) \right )Z(\theta_0) = A'(\theta_0) A(\theta_0),
\end{equation*}
i.e.
\begin{align}
\sin \theta_0 \left ( (1-c(T) ) \right . & \cos \theta_0  \left .+ (1+c(T)) \cosh \eta \right )  Z(\theta_0)  \nonumber
\\
&= \sin \theta_0 \cosh \eta \left ( 1 + c(T) - (c(T)-1) \cosh \eta \cos \theta_0 \right ).\nonumber
\end{align}
Let us assume first that $\theta_0 \neq 0 , \pm \pi$.  Hence
\begin{align}
\label{E:Z}
\left ( (1-c(T) ) \right . & \cos \theta_0  \left .+ (1+c(T)) \cosh \eta \right )  Z(\theta_0)  \nonumber
\\
&= \cosh \eta \left ( 1 + c(T) - (c(T)-1) \cosh \eta \cos \theta_0 \right ).
\end{align}
Suppose the left-hand side vanishes of~\eqref{E:Z} vanishes, i.e.
\begin{equation}
\label{step1}
\cos \theta_0 = \frac{1+c(T)}{c(T)-1} \cosh \eta.
\end{equation}
Since $\sin(\theta_0) \neq 0$ the right-hand side of~\eqref{E:Z} must also vanish, and therefore
\begin{equation} \label{step2}
\cos \theta_0 = \frac{1+c(T)}{c(T)-1} \mathrm{sech} \eta.
\end{equation}
Equations~\eqref{step1} and~\eqref{step2} cannot hold simultaneously (since $\eta > 0$), and therefore the left-hand side of~\eqref{E:Z} cannot vanish.  We deduce that 
\[
Z(\theta_0) = \frac{ \cosh \eta \left ( 1 + c(T) - (c(T)-1) \cosh \eta \cos \theta_0 \right )}{(1-c(T) ) \cos \theta_0+ (1+c(T)) \cosh \eta}.
\]
We now substitute this into the quadratic equation for $Z(\theta)$ to give
\[
\left ( \cos \theta_ 0 - \cosh \eta \right )^2 \left ( (c(T)-1) \cosh \eta \cos \theta_0  - (1+c(T)) \right ) = 0.
\]
Since $\eta > 0$, we must have that
\[
\cos \theta_0 = \frac{1+c(T)}{(c(T)-1)} \mbox{sech} \eta.
\]
However, substituting this into~\eqref{E:Z}, gives $Z(\theta_0) = 0$, a contradiction.  Therefore $\theta_0 = 0,\pm \pi$.  It is easy to see that $\theta_0 = \pm \pi$ leads to a smaller value of $Z(\theta_0)$ than $\theta_0 = 0$.  Hence the result follows.
\end{proof}
Using this lemma, we deduce that the $n$th coefficient $a_n$ of the Chebyshev expansion of $f_2(u)$ is bounded by 
\begin{align*}
B(\omega,n,\rho,T) &= \frac{2}{\rho^n} \cos \left [ \omega T \cos^{-1} \left ( \frac14 \left(\rho+\frac{1}{\rho}\right)(1-c(T)) + \frac{1}{2}(1+c(T)) \right ) \right ]
\\
&=\frac{2}{\rho^n} \cos \left [ i \omega T \cosh^{-1} \left (\frac14 \left(\rho+\frac{1}{\rho}\right)(1-c(T)) + \frac{1}{2}(1+c(T)) \right ) \right ].
\end{align*}
We proceeded in \S\ref{s:OPSresolution} by computing the roots of the partial derivative of the bound with respect to $\rho$. The same approach applies here, but unfortunately it does not lead to explicit expressions. However, a simple modification makes the bound more manageable.  We write
\[
\cos (x) = \frac{1}{2} ( e^{i x} + e^{-i x} ).
\]
Since the argument of the cosine for $\rho>1$ is purely imaginary, with positive imaginary part, the second exponential dominates the first. Hence, for large $\omega$, we may approximate $B(w,n,\rho,T)$ by 
\begin{align}
 \tilde{B}(\omega,n&,\rho,T)  \nonumber
 \\
 &:= \frac{1}{\rho^n} \exp \left[  \omega T \cosh^{-1}\left(\frac14 \left(\rho+\frac{1}{\rho}\right) \left(1-c(T)\right) +\frac12 \left(1+c(T)\right) \right) \right].\label{E:fourext_bound_exp}
\end{align}
One can then explicitly find roots of the partial derivative of $\tilde{B}$ with respect to $\rho$.  We have
\begin{align}
& \rho^* (n)  \nonumber
 \\
 =& -\frac{n^2 (c(T)+3) + \omega^2 T^2 (c(T)-1) \pm 2n \sqrt{\omega^2 T^2 (c(T)^2-1)+2 n^2(c(T)+1)}}{\omega^2 T^2 (c(T)-1) + n^2 (1-c(T))}.\label{E:roots_exp}
\end{align}
These two roots are again each other's inverse.  One quickly verifies that the square root is real and positive if and only if 
\begin{equation}\label{E:n_exp}
  n \geq \frac12 \omega T \sqrt{2-2c(T)} = \frac{1}{2} \omega r(T),
\end{equation}
with
\begin{equation}\label{E:rT}
 r(T) := T \sqrt{2 - 2c(T)} = 2T \sin \left ( \frac{\pi}{2 T} \right).
\end{equation}
If the condition~\eqref{E:n_exp} is satisfied, selecting the $+$ sign in~\eqref{E:roots_exp} yields the root that is greater than $1$.

A little care is necessary when applying the bound~\eqref{E:fourext_bound_exp}.  Recall that $f_{2}(u)$ is only analytic in Bernstein ellipses $e(\rho)$ with $\rho<E(T)$.  It may be the case that $\rho^{*}(n) \geq E(T)$ and therefore we cannot use this bound directly.  However, explicit computation yields the condition
\begin{equation}\label{E:condition_rstar}
E(T) \geq \rho^*(n) \quad \Leftrightarrow \quad n \geq \frac{2 }{\sqrt{6+2 c(T)}} \omega T = \frac{1}{\sqrt{1+\cos^2 \left ( \frac{\pi}{2T} \right )}} \omega T.
\end{equation}
Moreover, if $r(T)$ is given by~\eqref{E:rT}, then 
\[
\frac{1}{2} r(T) < \frac{T}{\sqrt{1+\cos^2 \left ( \frac{\pi}{2T} \right )}} ,\quad \forall T > 1,
\]
With this to hand, we are now able to distinguish between the following cases:
\begin{enumerate}
\item $\mathbf{n < \frac12 \omega r(T)}.$ In this case, the argument of the square root in~\eqref{E:roots_exp} is negative and hence both roots are imaginary. The function $\tilde{B}(\omega,n,\rho,T)$ is either monotonically increasing or decreasing as a function of $\rho$ on $[1,\infty)$. Reasoning as before, note that
\begin{equation}\label{E:Btilde_derivative}
\frac{\partial \tilde B}{\partial \rho} \big |_{\rho=1} = \frac{1}{2} \omega r(T) - n.
\end{equation}
The partial derivative vanishes precisely when $n = \frac12 \omega r(T)$ and it is positive for smaller $n$. Thus, the bound is increasing and the best possible bound we can find of the form~\eqref{E:chebyshev_bound} is
\[
|a_n | \leq \tilde B(\omega,n,1,T) = 2,\quad n < \frac{1}{2} \omega r(T).
\]

\item $\mathbf{\frac{1}{2} \omega r(T) <n < \frac{T}{\sqrt{1+\cos^2 \left ( \frac{\pi}{2T} \right )}} \omega}$. In this case the argument of the square root in~\eqref{E:roots_exp} is positive. Moreover, the condition $n < \omega T$ ensures that the overall expression for both roots $\rho^*$ is positive (note that the denominator switches sign at $n=\omega T$). Thus, there is a minimum of $\tilde B(\omega,n,\rho,T)$ at $\rho=\rho^*(n) > 1$, and this minimum satisfies $\rho^*(n) < E(T)$. Therefore, we obtain the bound
\[
|a_{n+k} | \leq \frac{2 M_{\rho^*(n)}}{(\rho^*(n))^{n+k}} \leq \frac{1}{(\rho^*(n))^{k}} \tilde{B}(\omega,n,\rho^*(n),T) \leq \frac{2}{(\rho^*(n))^k},\quad \forall k \in \N.
\]
Hence, we deduce exponential decay of the coefficients $a_{n}$ once $n$ exceeds $n > \frac{1}{2} \omega r(T)$.
\item $\mathbf{\frac{T}{\sqrt{1+\cos^2 \left ( \frac{\pi}{2T} \right )}} \omega < n < \omega T}$.  The arguments of the previous case still hold, but now $\rho^*(n) \geq E(T)$.  Thus, the minimum value of $\tilde B(\omega,n,\rho,T)$ is obtained at $\rho = E(T)$ and therefore
\begin{equation} \label{E:anexp}
|a_n| \leq  \frac{2}{E(T)^n} \exp \left [ \omega T \cosh^{-1} \left ( 2 + c(T) \right ) \right ].
\end{equation}
Note that this bound is valid for all $n \in \N$, not just $n$ in the stated range.  However, it only gives an accurate portrait of the coefficient decay once $n$ exceeds $\frac{T}{\sqrt{1+\cos^2 \left ( \frac{\pi}{2T} \right )}} \omega$.

\item $\mathbf{n > \omega T}$. The denominator of~\eqref{E:roots_exp} vanishes at $n=\omega T$ and switches sign for larger $n$, so that $\rho^*(n)$ becomes negative. Based on~\eqref{E:Btilde_derivative}, we conclude that the bound is monotonically decreasing as a function of $\rho$ on $[1,\infty)$. Once more we are limited to choosing $\rho \leq E(T)$, and therefore the estimate~\eqref{E:anexp} is also applicable in this case.
\end{enumerate}
This derivation establishes the resolution constant $r(T) = 2T \sin \left ( \frac{\pi}{2T} \right )$, but only for even oscillations $f(x) = \cos \omega \pi x$.  To prove the complete result, we need to obtain an equivalent statement for the odd functions
\begin{equation}\label{E:sine}
f(x) = \sin \omega \pi x.
\end{equation}
We proceed along similar lines to the cosine case.  First, note that the continuous Fourier extension of~\eqref{E:sine} equates to the orthogonal polynomial expansion of
\[
f_{2}(u) = \frac{\sin \left [ \omega T \cos^{-1} \left ( \frac{u}{2}(1-c(T)) + \frac{1}{2}(1+c(T) \right) \right ]}{\sqrt{1-\left [ \frac{u}{2}(1-c(T)) + \frac{1}{2}(1+c(T)) \right ]^2}} ,\quad u \in [-1,1].
\]
It is not immediately apparent that this function is analytic at $u=1$.  However, simple analysis reveals that the square-root type singularity is removable, and consequently $f_2(u)$ is analytic in any Bernstein ellipse $e(\rho)$ with $\rho<E(T)$.

Proceeding as before, we find that the $n$th Chebyshev polynomial coefficient of $f_2(u)$ admits the bound $|a_n| \leq B(\omega,n,\rho,T)$, $\forall \rho > 1$, where
\[
B(\omega,n,\rho,T) = \frac{2 \sinh \left [ \omega T \cosh^{-1} \left ( \frac{1}{4}(\rho+\frac{1}{\rho})(1-c(T)) + \frac{1}{2}(1+c(T))\right ) \right ]}{\rho^n \sqrt{C(\rho,T)}},
\]
and $C(\rho,T) = \left [ \frac{1}{4}(\rho+\frac{1}{\rho})(1-c(T)) + \frac{1}{2}(1+c(T)) \right ]^2-1$.  Suppose first that $n<\frac{1}{2} \omega r(T)$.  Letting $\rho \rightarrow 1^+$,  we find that
\[
|a_n| \leq \lim_{\rho \rightarrow 1^+} B(\omega,n,\rho,T) = 2 \omega T.
\]
Next suppose that $\frac{1}{2} \omega r(T) < n < \frac{T}{\sqrt{1+\cos^2 \left ( \frac{\pi}{2T} \right )}} \omega$.  Then, for sufficiently large $\omega$, we can replace $B(\omega,n,\rho,T)$ by
\[
\frac{\tilde B(\omega,n,\rho,T)}{\sqrt{C(\rho,T)}},
\]
where $\tilde B(\omega,n,\rho,T)$ is the bound~\eqref{E:fourext_bound_exp} of the cosine case.  Hence, we deduce that
\begin{align*}
|a_{n+k} | &\leq \frac{2}{(\rho^*(n))^k \sqrt{C(\rho^*(n),T)}}
\\
& = \frac{2}{(\rho^*(n))^k} \sqrt{\frac{n^2-\omega^2 T^2}{2 (1+c(T)) n^2 \omega^2 T^2+ (c(T)^2-1) \omega^4 T^4}},\quad  \forall k \in \N.
\end{align*}
Note that the square root term is bounded since $n> \frac{1}{2} \omega r(T)$.  Indeed, it attains its maximal value at $n=\frac{1}{2} \omega r(T) + 1$, and hence is bounded in both $n$ and $\omega$.  This confirms exponential decay of the coefficients $a_{n}$ for $n>\frac{1}{2} \omega r(T)$.

In the third scenario, i.e. $n > \frac{T}{\sqrt{1+\cos^2 \left ( \frac{\pi}{2T} \right )}} \omega$, we use the value $\rho=E(T)$ to give
\[
|a_n| \leq \frac{2}{(E(T))^n} \frac{\exp \left [ \omega T \cosh^{-1} (2+c(T)) \right ]}{\sqrt{(2+c(T))^2-1}}.
\]
Note the similarity of this bound with~\eqref{E:anexp} for the cosine case.

To sum up, we have shown the following theorem:
\begin{theorem}\label{t:exact_res}
The resolution constant $r=r(T)$ of the continuous Fourier extension $g_n$ is precisely $2T \sin \left ( \frac{\pi}{2T} \right )$.  In particular, $r(T) \sim 2$ for $T \approx 1$ and $r(T) \sim \pi$ for $T \gg 1$.
\end{theorem}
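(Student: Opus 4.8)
The plan is to exploit the equivalence, established in \S\ref{ss:convergence_exact}, between the continuous Fourier extension and a Chebyshev expansion of a transformed function, and then to run the resolution argument of \S\ref{s:OPSresolution} in this transformed setting. First I would reduce to the model function $f(x)=\exp(\ii\omega\pi x)$ and split it into its even part $\cos\omega\pi x$ and odd part $\sin\omega\pi x$, since Theorem~\ref{th:exact_solution} represents $g_n$ as a sum of a $T_k^T$-part (built from $f_e$) and a $U_k^T$-part (built from $f_o$). Composing the inverse cosine map $x=\frac{T}{\pi}\cos^{-1}y$ with the affine map from $[c(T),1]$ to $[-1,1]$ turns the even part into the Chebyshev expansion of the function $f_2(u)$ in~\eqref{E:f2udef}, and the odd part into the Chebyshev expansion of the analogous function carrying an extra $\sqrt{1-y^2}$ factor. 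Because the resolution reduction in \S\ref{s:OPSresolution} shows the resolution power is insensitive to the (nonclassical) orthogonal polynomial weight, it suffices to analyze the Chebyshev coefficients of these transformed functions on $[-1,1]$.

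Next I would bound the $n$th Chebyshev coefficient via the Bernstein-ellipse estimate~\eqref{E:chebyshev_bound}, $|a_n|\le 2M_\rho\rho^{-n}$, which requires $M_\rho=\max_{e(\rho)}|f_2|$. The Lemma preceding the theorem locates this maximum at $\theta=0$ for $\rho<E(T)$; granting it, the maximum is explicit and, after discarding the subdominant exponential, the bound takes the manageable form $\tilde B(\omega,n,\rho,T)$ of~\eqref{E:fourext_bound_exp}. I would then optimize $\tilde B$ over $\rho\in[1,\infty)$: its stationary points are~\eqref{E:roots_exp}, and these are real, giving a genuine interior minimizer $\rho^*(n)>1$, exactly when $n\ge\frac12\omega r(T)$ with $r(T)=2T\sin(\pi/(2T))$, by~\eqref{E:n_exp}. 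This is the crux: the derivative~\eqref{E:Btilde_derivative} at $\rho=1$ equals $\frac12\omega r(T)-n$, so for $n<\frac12\omega r(T)$ the bound increases in $\rho$ and the best certifiable estimate is $|a_n|\le 2$ (respectively $\le 2\omega T$ in the odd case), whereas for $n>\frac12\omega r(T)$ one obtains exponential decay. A split in $n$ is then needed, since $\rho^*(n)$ may exceed the analyticity radius $E(T)$ (Theorem~\ref{th:expconv}): one uses $\rho=E(T)$ once $n$ surpasses $T\omega/\sqrt{1+\cos^2(\pi/(2T))}$, and $\rho^*(n)$ below that, together with the elementary inequality $\frac12 r(T)<T/\sqrt{1+\cos^2(\pi/(2T))}$, to confirm that the onset of decay is precisely $n=\frac12\omega r(T)$. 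The odd case runs identically, after verifying that the square-root singularity of the transformed function at $u=1$ is removable and that the extra factor $C(\rho,T)^{-1/2}$ stays bounded for $n>\frac12\omega r(T)$.

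Finally, I would convert the coefficient estimates into an $L^2$ (or $L^\infty$) error bound for $g_n$ via $\|f-g_n\|\le\sum_{k>n}|a_k|$, concluding that $g_n$ begins to converge once the index $n$ exceeds $\frac12\omega r(T)$; since $g_n$ carries $2n+1$ degrees of freedom this yields a resolution constant of exactly $r(T)$. The stated asymptotics are then immediate: $r(1^+)=2\sin(\pi/2)=2$ and $r(T)=2T\sin(\pi/(2T))\to\pi$ as $T\to\infty$, and $r$ increases monotonically on $(1,\infty)$. I expect the main obstacle to be the Lemma on the location of the maximum of $|f_2|$ on $e(\rho)$ — the implicit analysis of $Z(\theta)=\cosh^2 Y(\theta)$ through its quadratic equation and the elimination of spurious critical points $\theta_0\ne 0,\pm\pi$ by contradiction — with the bookkeeping of which radius ($\rho^*(n)$ or $E(T)$) is operative in each $n$-range a secondary nuisance. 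A further subtlety concerns sharpness: the Bernstein bound only rules out a \emph{certifiable} decay below the threshold, so to justify the word ``precisely'' one should either invoke the known Bessel-type asymptotics of the Chebyshev coefficients of $e^{\ii\omega\pi x}$, which remain $\Theta(1)$ until the index reaches the threshold, or argue directly that the Bernstein estimate is asymptotically tight here.
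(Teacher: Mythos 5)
Your proposal follows essentially the same route as the paper's own argument: reduce to the even and odd oscillations, pass via the cosine and affine maps to the Chebyshev expansion of $f_2(u)$, bound coefficients through the Bernstein-ellipse estimate with the maximum located at $\theta=0$, optimize the simplified bound $\tilde B$ to find the threshold $n=\tfrac12\omega r(T)$ from the reality of the roots $\rho^*(n)$, and handle the regime where $\rho^*(n)$ exceeds $E(T)$ separately, with the odd case treated analogously. Your closing remark on sharpness is a fair observation (the paper's proof, like yours, establishes the onset of certified exponential decay at $n=\tfrac12\omega r(T)$ via upper bounds and does not separately prove a matching lower bound), but otherwise the two arguments coincide.
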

The resolution constant $r(T)$ is illustrated as a function of $T$ in Fig.~\ref{fig:resolution_constant}. In Fig.~\ref{fig:exact_resolution} we confirm this theorem with several numerical examples.  As discussed in \S\ref{ss:convergence_numerical}, the result of the numerical computation of~\eqref{E:ls} may not coincide with the exact solution $g_n$.  We shall discuss the question of resolution power of the numerical solution, as opposed to the exact solution, in detail in \S \ref{ss:resolution_numerical}.  For the moment, so that we can illustrate Theorem~\ref{t:exact_res}, we have removed this issue by carrying out the computations in Fig.~\ref{fig:exact_resolution} in additional precision.

\begin{figure}[t]
\begin{center}
    \includegraphics[width=5cm]{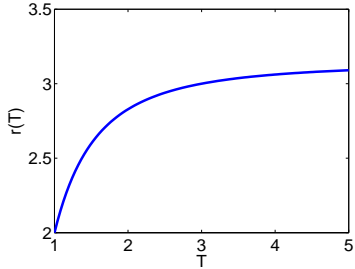}
 \caption{The resolution constant of Fourier extensions as a function of $T$.}\label{fig:resolution_constant}
\end{center}
\end{figure}

\begin{figure}[t]
\begin{center}
  \subfigure[$T=\sqrt{2}$]{
    \includegraphics[width=5cm]{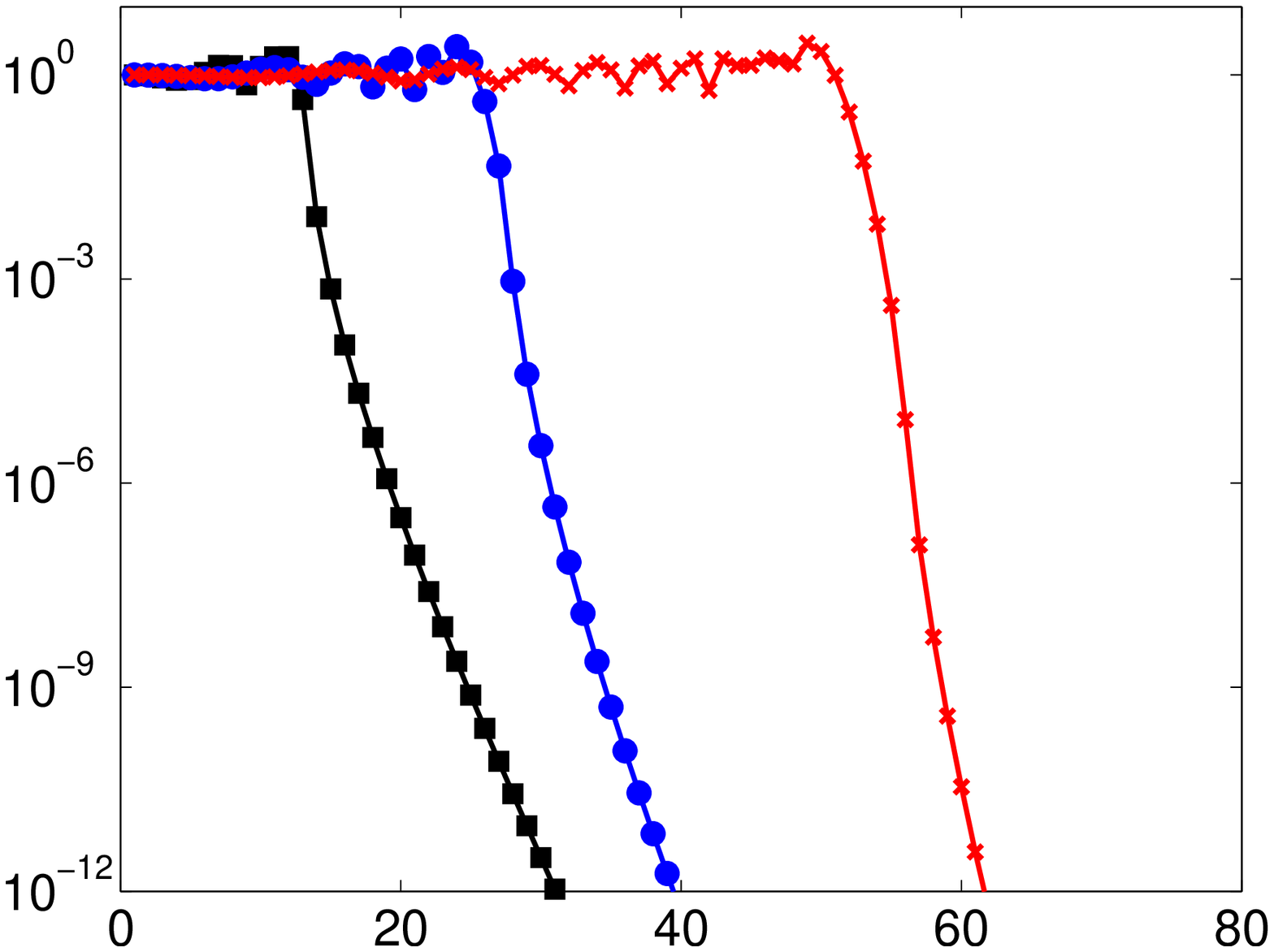}
  }
  \subfigure[$T=5$]{
    \includegraphics[width=5cm]{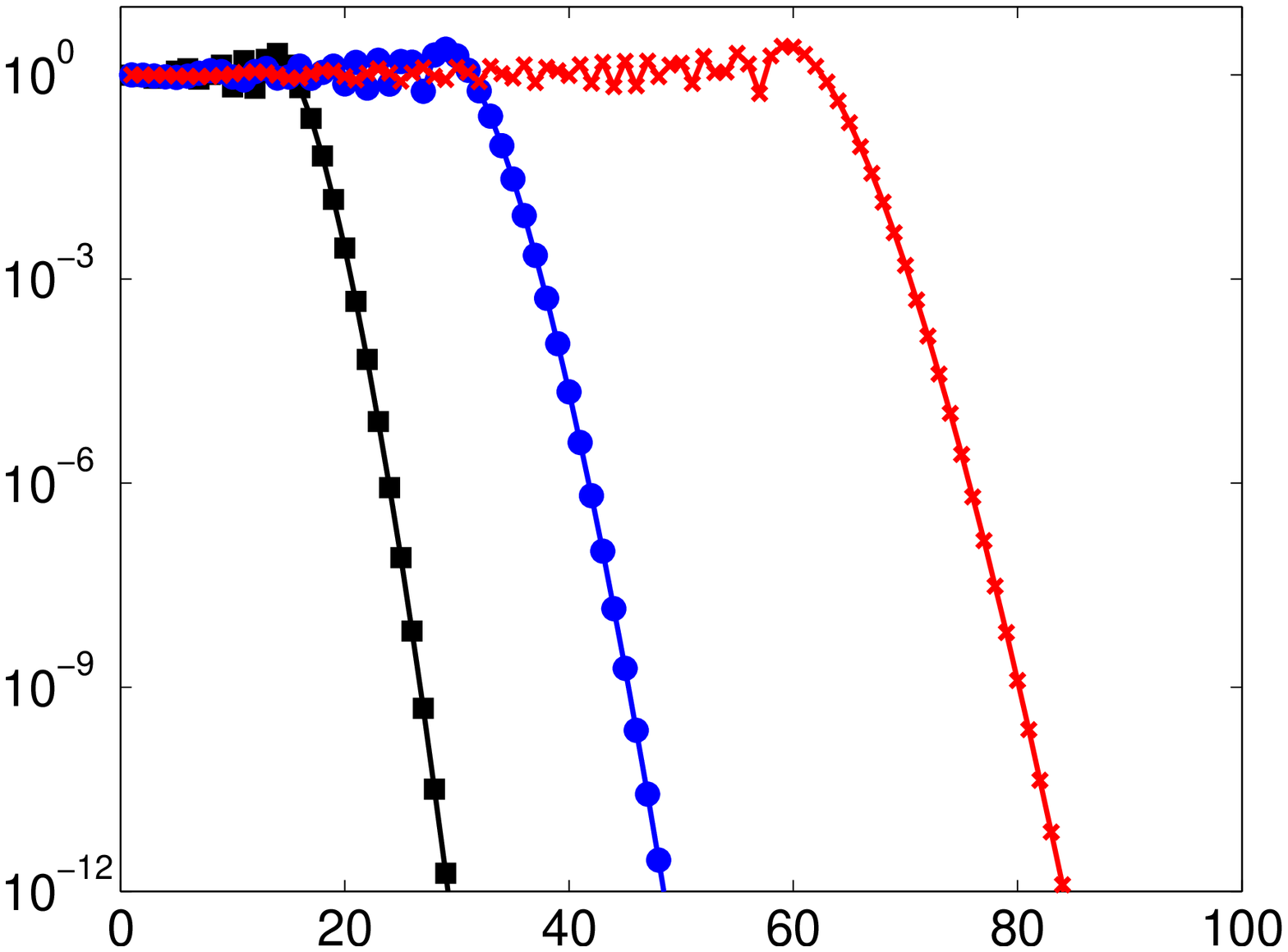}
  }
 \caption{The error $\| f - g_n \|_{L^\infty_{[-1,1]}}$, where $f(x) = \exp ( \ii \pi \omega x)$ and $\omega = 10,20,40$ (squares, circles and crosses respectively).}\label{fig:exact_resolution}
\end{center}
\end{figure}

As shown in the right panel of Fig.~\ref{fig:exact_resolution}, the resolution constant $r(T) \sim \pi$ for $T \gg 1$.  Incidentally, the fact that $r(T)$ is independent of $T$ for large $T$ can be seen by the studying the quantity $f_{2}(u)$.  For fixed $\omega$, we find that
\[
f_{2}(u) = \cos \left ( \frac{\pi w}{\sqrt{2}} \sqrt{1-u} \right ) + \BIGO{(T^{-2})},\quad T \rightarrow \infty.
\]
The leading term is independent of $T$, thus we expect $r(T)$ to approach a constant value as $T \rightarrow \infty$.  It is also natural to expect that this constant is the same as the resolution constant of polynomial approximations. Indeed, for large $T$ the functions $\CKPTX$ and $\SKPTX$ are not at all oscillatory on $[-1,1]$.  In particular, they are well approximated on $[-1,1]$ by their Taylor series. As such, the span of the first $n$ such functions is very close to the span of their Taylor series, which is exactly the space of polynomials of degree $n$.  Thus, for fixed $n$ and sufficiently large $T$, the Fourier extension $g_n$ of an arbitrary function $f$ closely resembles the best polynomial approximation to $f$ in the $L^2$ sense; in other words, the Legendre polynomial expansion.  The value of $\pi$ for the resolution constant arises from the discussion in \S\ref{s:OPSresolution}.

As commented at the start of this section, oscillations at `periodic' frequencies $\omega = \frac{m}{T}$, $m \in \Z$, are informative in that they illustrate when the Fourier extension behaves roughly like a classical Fourier series on $[-T,T]$, that is, when $T \approx 1$, and when it does not, i.e. $T \gg 1$.  The decay of the coefficients $a_n$ for these oscillations is also quite special.  Since $f(x) = \exp (\ii \frac{m}{T} \pi x )$ is entire and periodic on $[-T,T]$, the corresponding function $f_{2}(u)$ given by~\eqref{E:f2udef} is also entire in the variable $u$.  This means that $f_2(u)$ is analytic in any Bernstein ellipse $e(r)$, and hence the corresponding Chebyshev coefficients $a_n$ (respectively, the errors $\| f - g_n \|$) decay superexponentially fast, as opposed to merely exponentially fast at rate $E(T)$.  Comparison of the left and right panels of Fig.~\ref{fig:exact_resolution} illustrates this difference.  

We now summarize this observation, along with the other convergence characteristics derived above, in the following theorem:

\begin{theorem}\label{th:convergence}
The error $\| f - g_n \|$ in approximating $f(x) = \exp(\ii \pi \omega x)$ by its continuous Fourier extension $g_n$ is $\BIGO(1)$ for $n < \frac{1}{2} \omega r(T)$.  Once $n$ exceeds $\frac{1}{2} \omega r(T)$ the error begins to decay exponentially, and when $n > \frac{T}{\sqrt{1+\cos^2 \left ( \frac{\pi}{2T} \right )}} \omega $, the rate of exponential convergence is precisely $E(T)$, unless $\omega = \frac{m}{T}$ for some $m \in \Z$, in which case it is superexponential.
\end{theorem}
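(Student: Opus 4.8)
The plan is to assemble this theorem from the case-by-case analysis already carried out in \S\ref{ss:resolution_exact}, from Theorem~\ref{th:expconv}, and from a short additional argument handling the periodic frequencies. Writing $f(x)=\exp(\ii\pi\omega x)=\cos\omega\pi x+\ii\sin\omega\pi x$, I would split $g_n$ into its even and odd parts and treat the two resulting scalar problems separately, so that by the triangle inequality it suffices to estimate the errors in approximating $\cos\omega\pi x$ and $\sin\omega\pi x$. Each of these, as recorded in \S\ref{ss:resolution_exact}, reduces through the inverse cosine map and the affine map $m^{-1}$ of \S\ref{sss:convergence_exact} to an orthogonal polynomial expansion of the associated function $f_2(u)$ on $[-1,1]$; invoking the norm-equivalence observation of \S\ref{s:OPSresolution}, the error $\|f-g_n\|$ is then controlled (up to a constant) by $\sum_{k>n}\bigl(|a_k^{\cos}|+|a_k^{\sin}|\bigr)$, where $a_k^{\cos}$, $a_k^{\sin}$ denote the Chebyshev coefficients of the respective $f_2$.

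The first two assertions follow directly from Cases~1 and~2 of the cosine derivation and their sine counterparts. For $n<\tfrac12\omega r(T)$, the best bound of the form~\eqref{E:chebyshev_bound} attainable at $\rho=1$ is $\BIGO(1)$ (the constants $2$ and $2\omega T$), and since the coefficients decay geometrically from $k\geq\tfrac12\omega r(T)$ onwards (Case~2), the error is $\BIGO(1)$; the matching statement that it genuinely fails to decay in this range is precisely the content of Theorem~\ref{t:exact_res}. Once $n$ exceeds $\tfrac12\omega r(T)$, the root $\rho^*(n)>1$ of~\eqref{E:roots_exp} becomes real and Case~2 gives $|a_{n+k}|\leq 2(\rho^*(n))^{-k}$ (with an extra bounded factor in the sine case), whence exponential decay of the error.

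For the precise rate in the regime $n>T\omega/\sqrt{1+\cos^2(\pi/2T)}$, the upper bound $\|f-g_n\|=\BIGO(E(T)^{-n})$ is exactly Cases~3--4 (the estimate~\eqref{E:anexp} and its sine analogue). To see that the rate is no faster than $E(T)$, hence \emph{precisely} $E(T)$, one checks that $f_2$ has a genuine singularity on the Bernstein ellipse $e(E(T))$: near $y=-1$ one has $\cos^{-1}y=\pi-\psi(\sqrt{1+y})$ with $\psi$ odd and analytic, so $\cos(\omega T\cos^{-1}y)=\cos(\omega T\pi)\cos(\omega T\psi)+\sin(\omega T\pi)\sin(\omega T\psi)$, and the second term carries a $\sqrt{1+y}$ branch point that is removable exactly when $\sin(\omega T\pi)=0$, i.e.\ when $\omega=m/T$; since the affine map of \S\ref{sss:convergence_exact} sends $y=-1$ to the point $u=1-2\,\mathrm{cosec}^2(\pi/2T)$ on $e(E(T))$, the characterization of the convergence rate recalled before Theorem~\ref{th:expconv} then yields the claim. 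When $\omega=m/T$ the offending term vanishes, $\cos(\omega T\cos^{-1}y)$ is entire in $y$ (the $y=+1$ branch point being removable as already noted, and $f$ being analytic and $2T$-periodic), so $f_2$ is entire and its Chebyshev coefficients decay faster than $R^{-n}$ for every $R>1$; the same computation applies to the sine part. I expect the sharpness argument — the branch-point expansion at $y=-1$ together with the appeal to the converse side of the Chebyshev rate characterization — to be the only non-routine step, the remainder being bookkeeping over the four cases already established in \S\ref{ss:resolution_exact}.
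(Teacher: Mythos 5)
Your proposal is correct and follows essentially the same route as the paper, which states this theorem as a summary of the case-by-case coefficient bounds of \S\ref{ss:resolution_exact} together with the observation that $f_2(u)$ is entire when $\omega = m/T$, so that the Chebyshev tail estimates transfer to $\|f-g_n\|$ exactly as you describe. Your only genuine addition is the explicit branch-point expansion at $y=-1$ showing removability precisely when $\sin(\omega T \pi)=0$, which makes rigorous the sharpness of the rate $E(T)$ that the paper disposes of with ``one can verify'' in \S\ref{sss:convergence_exact}.
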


\subsection{Points per wavelength and resolution power of the discrete Fourier extension}
The discrete Fourier extension possesses exactly the same resolution power as its continuous counterpart.  This is a consequence of the fact that the error committed by the polynomial interpolant of a function at Chebyshev nodes can be bounded by a logarithmically growing factor (the Lebesgue constant) multiplied by the error of its expansion in Chebyshev polynomials \cite{rivlin1990chebyshev}.

However, the discrete Fourier extension does conveniently connect the concept of resolution power with the notion of points per wavelength (see \S\ref{s:introduction}).  As shown by Theorem~\ref{t:exact_res}, Fourier extensions require $r(T) = 2 T \sin \left ( \frac{\pi}{2T} \right )$ points per wavelength to resolve oscillatory behaviour, provided the corresponding nodes are of the form~\eqref{E:chebnodes}.

As it transpires, the exact value for the resolution constant can also be heuristically obtained by looking at the maximal spacing between the mapped symmetric Chebyshev nodes $x_i$ (as defined in~\eqref{E:chebnodes}).  Intuitively speaking, if the maximal node spacing for a set of $n$ nodes is $h$, then one can only expect to be able to resolve oscillations of frequency $\omega < \frac{1}{h}$.  For the mapped symmetric Chebyshev nodes, one can show that the maximal spacing
\[
h = x_{n} - x_{n-1} \sim \frac{1}{2n} r(T),\quad n \rightarrow \infty.
\]
Given that there are a total of $2n+2$ nodes, one also obtains the stipulated value for the resolution constant in this manner.  Note that identical arguments based on either equispaced or Chebyshev nodes in $[-1,1]$ give maximal grid spacings $h=\frac{2}{n}$ and $h=\frac{\pi}{n}$ respectively.  These correspond to the resolution constants of Fourier series and polynomial approximations, in agreement with previous discussions.

\subsection{Resolution power of numerical approximations}\label{ss:resolution_numerical}
As mentioned, it is not necessarily the case that the numerical solution of~\eqref{E:exactlinsys} (or its discrete counterpart) will coincide with the continuous (discrete) Fourier extension.  Therefore, the predictions of Theorem~\ref{t:exact_res} may not be witnessed in computations.  In Fig.~\ref{fig:num_resolution} we give results for the numerical computation of the continuous/discrete Fourier extensions.   For small $T$ there is little difference with the left panel of Fig.~\ref{fig:exact_resolution} (except, as mentioned in \S\ref{ss:resolution_numerical}, the computation of the continuous extension only attains around eight digits of accuracy).  However, when $T \gg 1$ the numerically computed extension appears to possess a much larger resolution constant than the value $r(T) \sim \pi$ predicted by Theorem~\ref{t:exact_res}.  In fact, as demonstrated in Fig.~\ref{fig:num_resolution2}, the numerical Fourier extension appears to have a resolution constant of $2T$ for all $T$, much like the na\"ive estimate 
given at the start of \S\ref{ss:resolution_exact}.  This observation is further corroborated in Fig.~\ref{fig:num_resolution3}, where a linear dependence of the numerical resolution constant with $T$ is witnessed.

\begin{figure}[t]
\begin{center}
 \subfigure[continuous, $T=\sqrt{2}$]{
    \includegraphics[width=5cm]{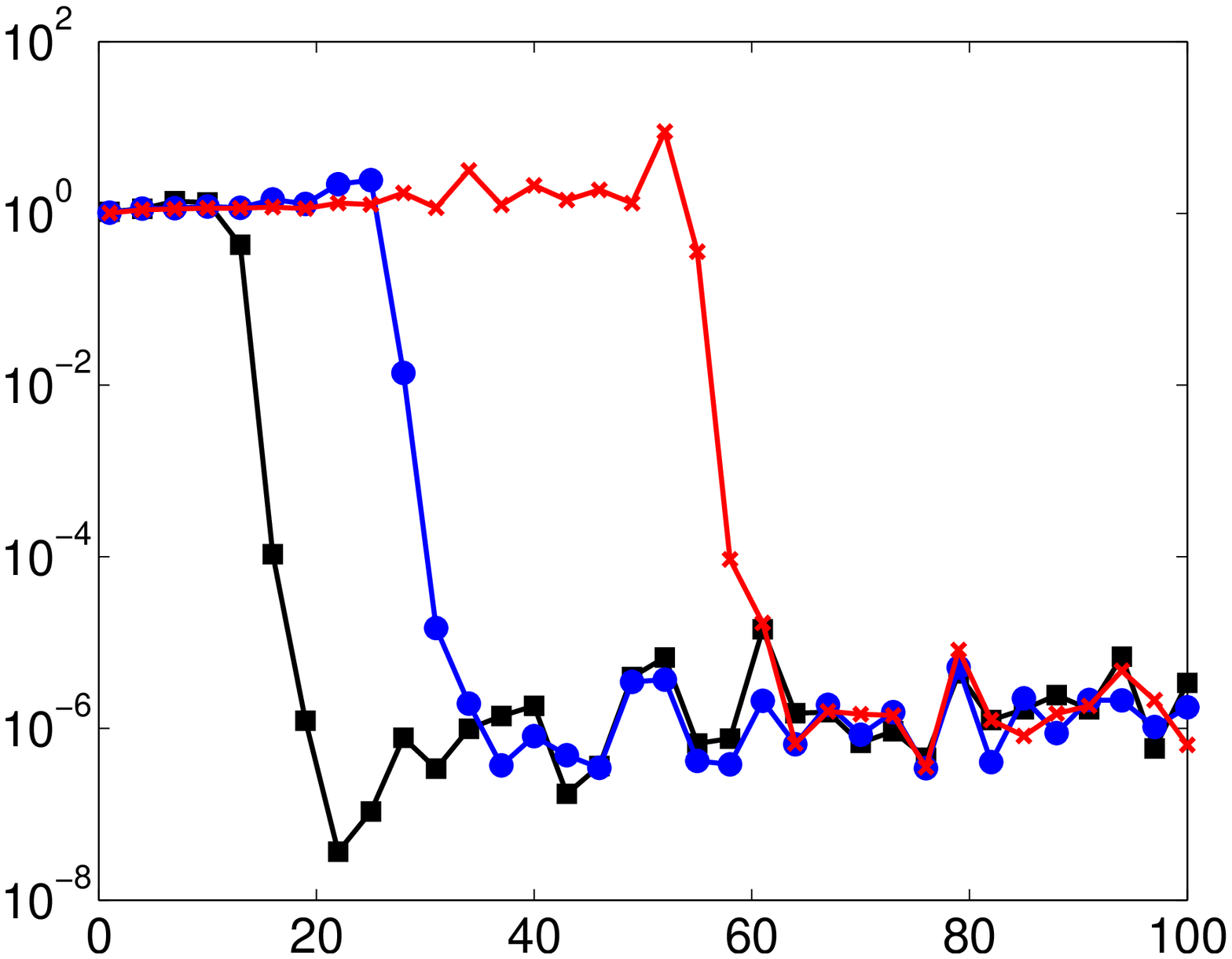}
  }
  \subfigure[continuous, $T=5$]{
    \includegraphics[width=5cm]{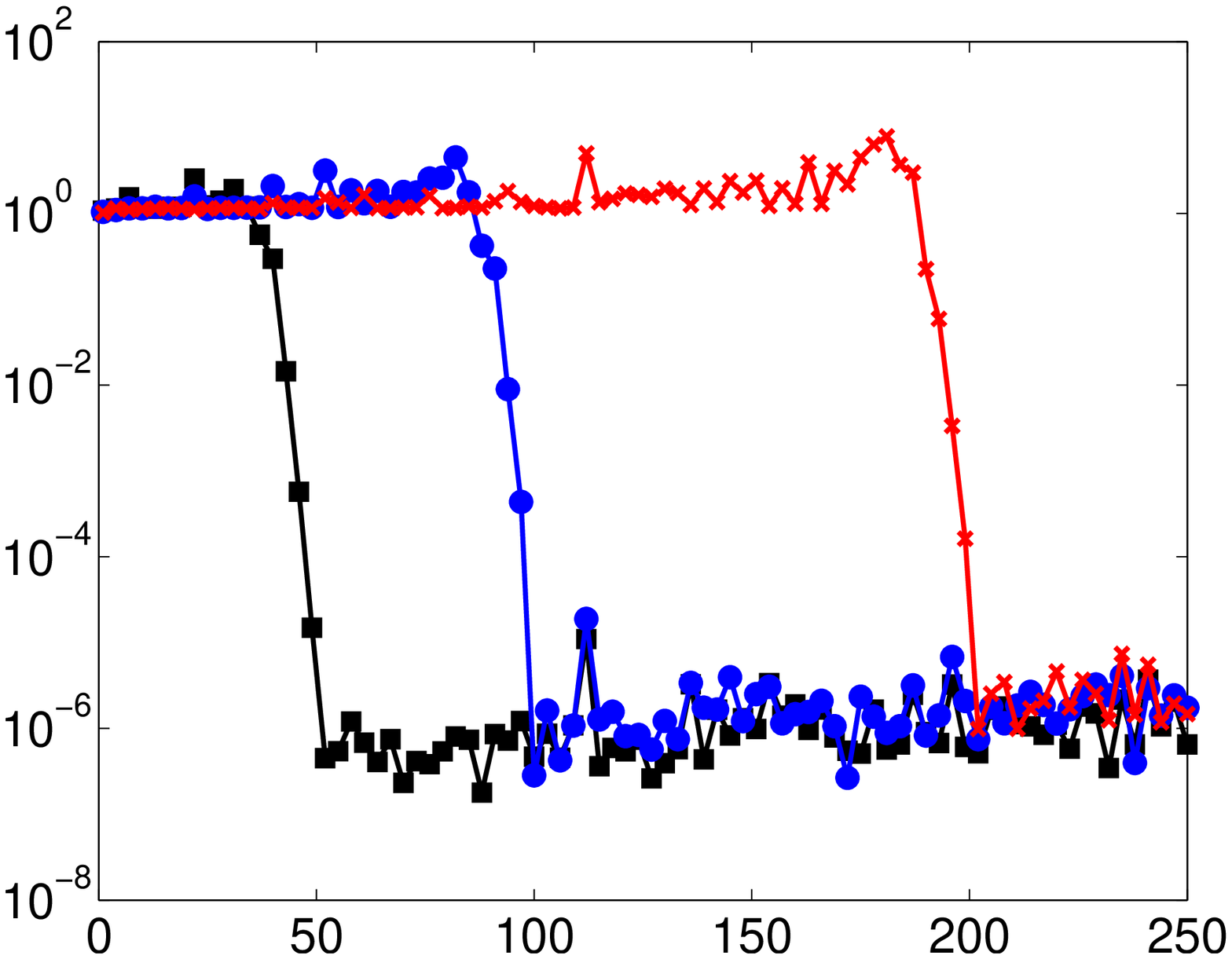}
  }
\\
  \subfigure[discrete, $T=\sqrt{2}$]{
    \includegraphics[width=5cm]{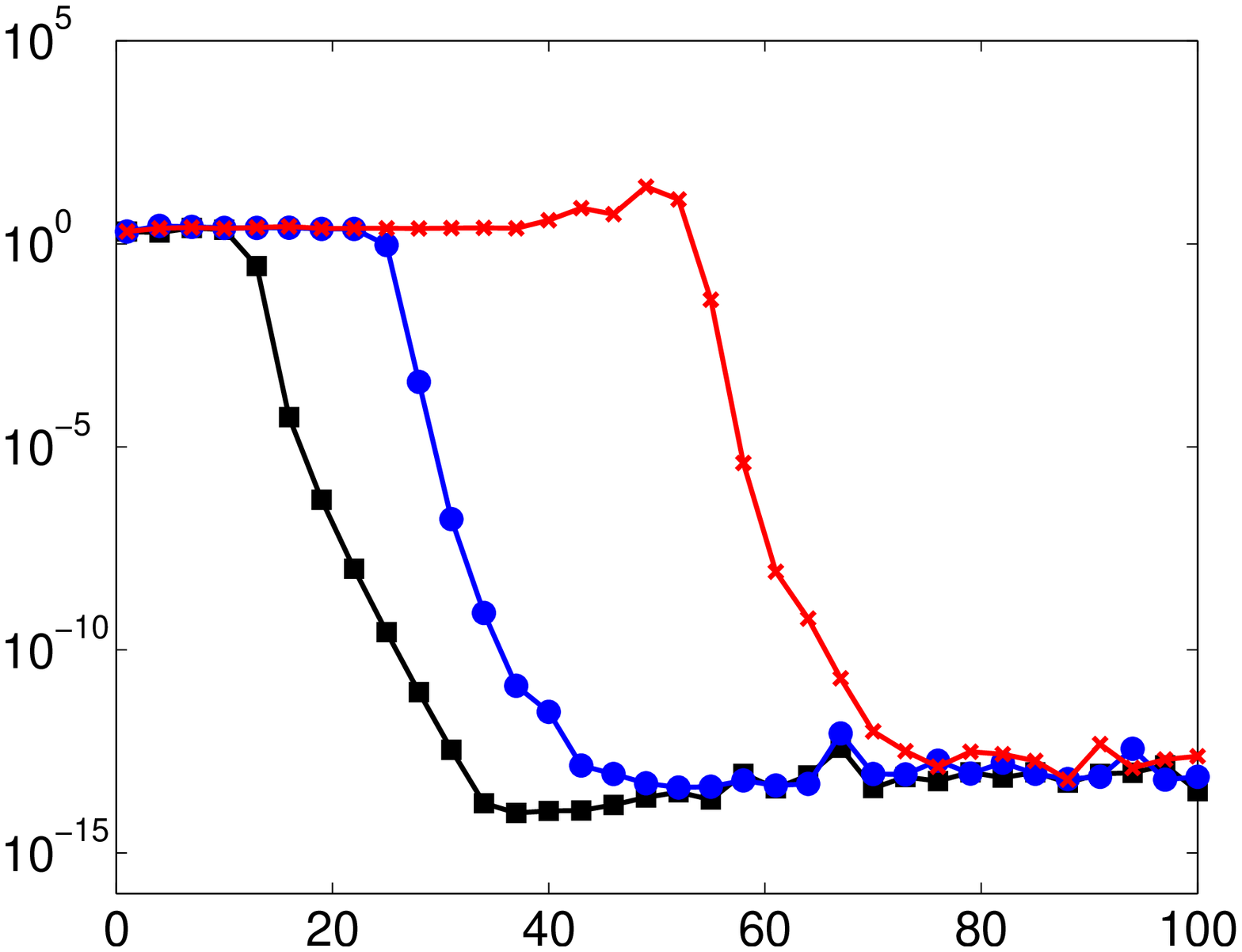}
  }
  \subfigure[discrete, $T=5$]{
    \includegraphics[width=5cm]{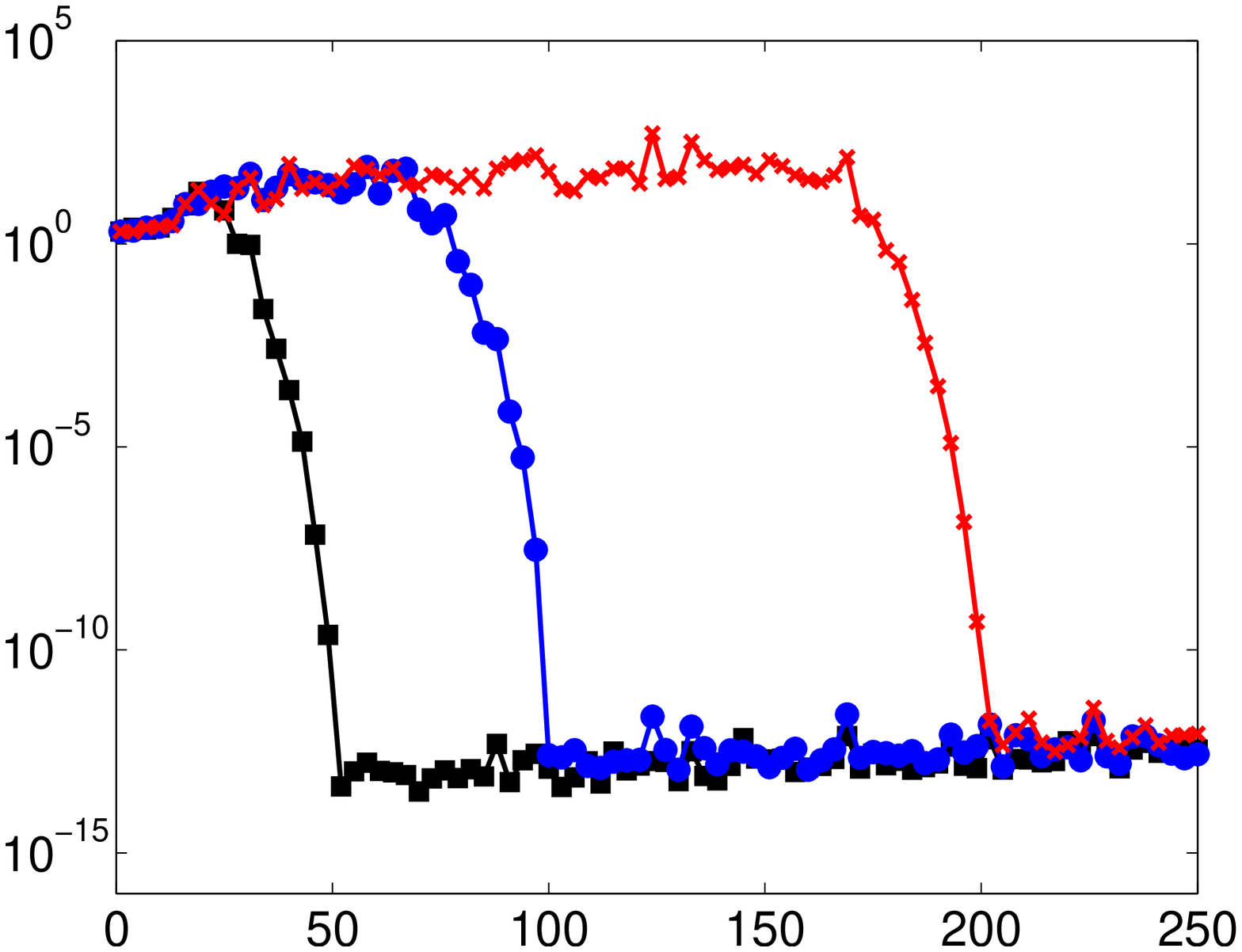}
  }
 \caption{The error $\| f - g_n \|_{L^\infty_{[-1,1]}}$, where $f(x) = \exp ( \ii \pi \omega x)$ and $\omega = 10,20,40$.}\label{fig:num_resolution}
\end{center}
\end{figure}

\begin{figure}[t]
\begin{center}
  \subfigure[continuous]{
    \includegraphics[width=5cm]{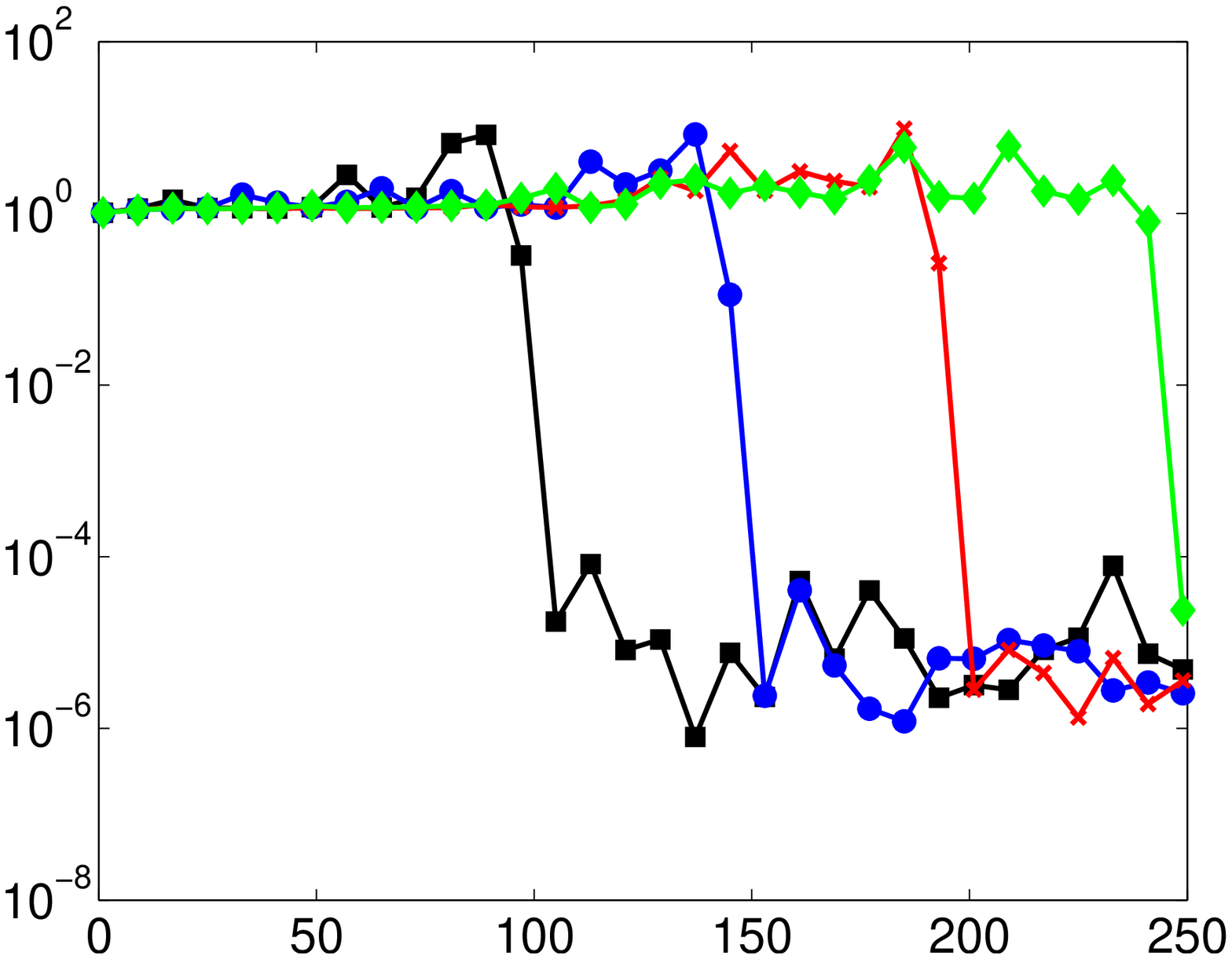}
  }
  \subfigure[discrete]{
    \includegraphics[width=5cm]{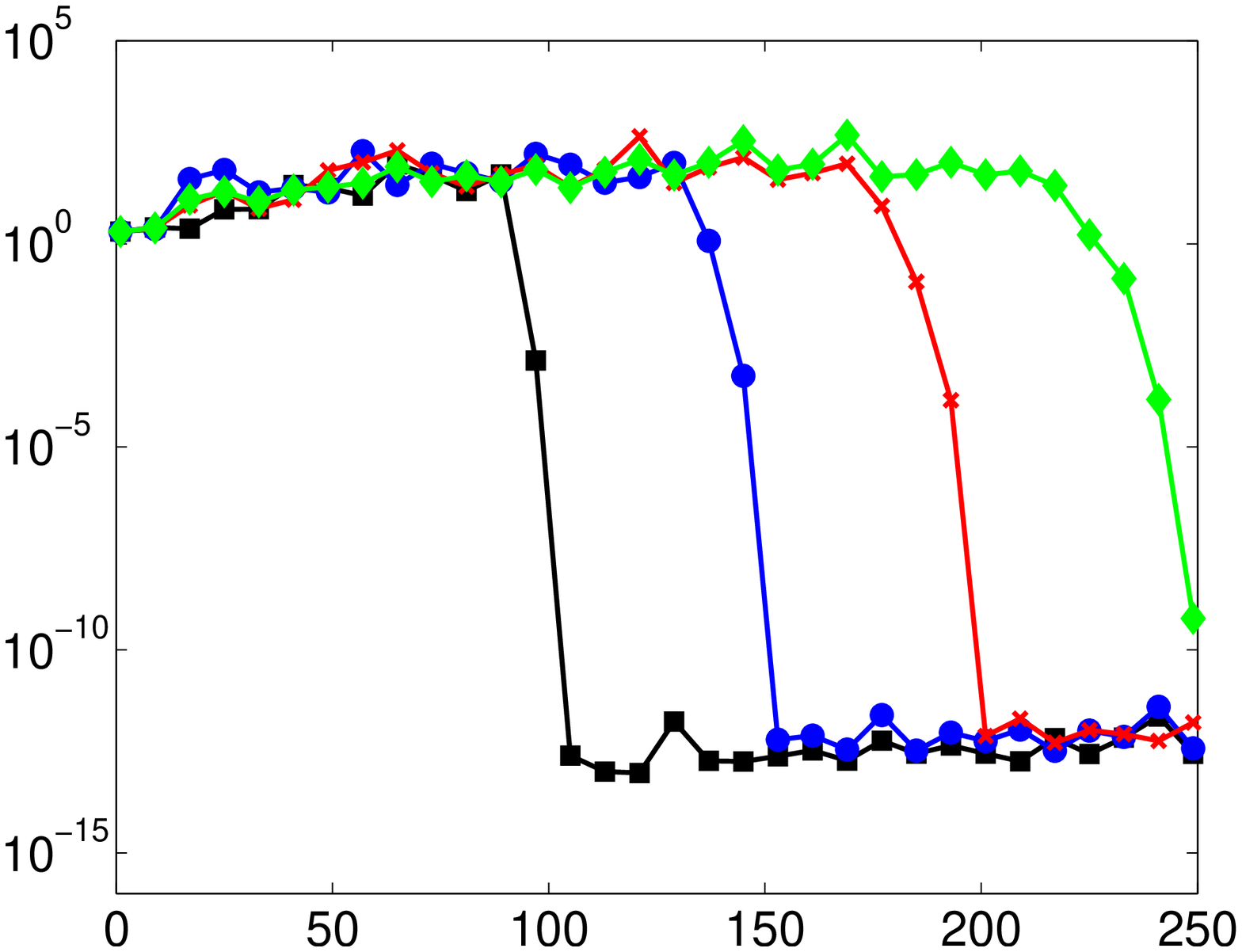}
  }
 \caption{The error $\| f - g_n \|_{L^\infty_{[-1,1]}}$, where $f(x) = \exp ( 50 \ii \pi x)$ and $T=2,3,4,5$ (squares, circles, crosses and diamonds respectively).}\label{fig:num_resolution2}
\end{center}
\end{figure}

\begin{figure}[t]
\begin{center}
    \includegraphics[width=5cm]{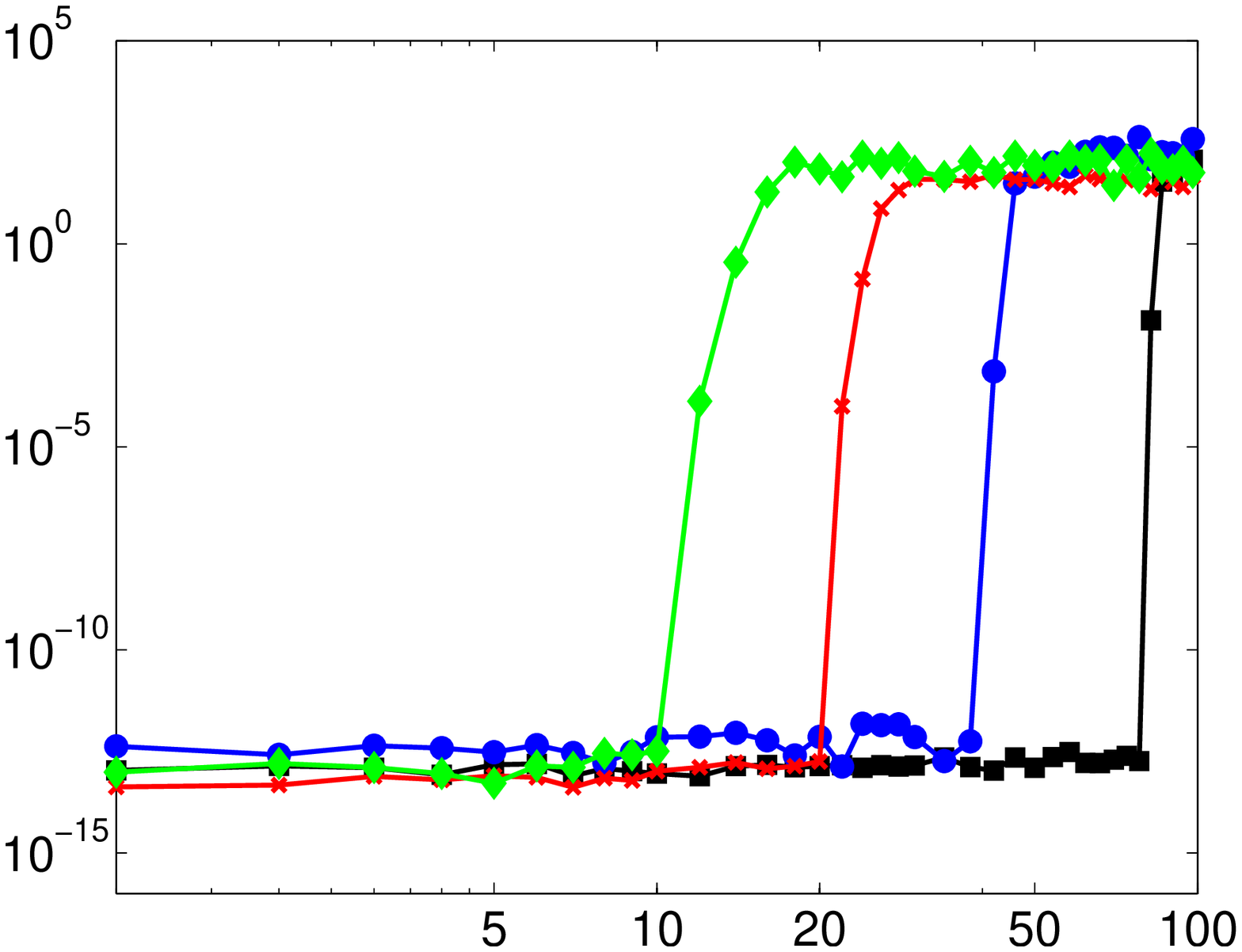}
 \caption{The error $\| f - g_{160} \|_{L^\infty_{[-1,1]}}$ against $\omega =1,2,\ldots,100$, where $g_n$ is the discrete Fourier extension of $f(x) = \exp (  \ii \omega \pi x)$ and $T=2,4,8,16$ (squares, circles, crosses and diamonds respectively).}\label{fig:num_resolution3}
\end{center}
\end{figure}

This difference can be explained intuitively. The improvement of Theorem~\ref{t:exact_res} over the na{\"i}ve estimate given at the beginning of \S\ref{ss:resolution_exact} revolves around the observation that slowly oscillatory functions in the frame can be recombined to approximate functions with higher frequencies with exponential accuracy. This is due to the redundancy of the frame. However, such combinations necessarily yield large coefficients: the orthogonal polynomials that give the exact solution grow rapidly outside $[-1,1]$ (see \cite{huybrechs2010fourier}) and, hence their coefficients when represented in the frame consisting of exponentials that are bounded on $[-T,T]$ must be large.  In fact, one has the following result:

\begin{lemma}\label{l:coeff_growth}
Let the continuous Fourier extension $g_n$ of $f(x) = \exp(i \pi \omega x)$ have coefficient vector $a \in \mathbb{C}^{2n+1}$.  Then
\begin{equation*}
\| \alpha \|_{l^2} \leq c(T) E(T)^{n},\quad n < \frac{1}{2} r(T) \omega,
\end{equation*}
for some constant $c(T) > 0$ depending on $T$ only.
\end{lemma}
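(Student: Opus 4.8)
The plan is to combine the $L^2_{[-1,1]}$-minimality of $g_n$ with the Gram-matrix picture of~\eqref{E:exactlinsys}, using the orthonormal polynomial basis of $G_n$ from Theorem~\ref{th:exact_solution} only to keep $\|g_n\|$ under control. First I would note that $g_n$ is the orthogonal projection of $f$ onto $G_n$ in $L^2_{[-1,1]}$, so that $\|g_n\|_{\elltwo}^2 = \|f\|_{\elltwo}^2 - \|f-g_n\|_{\elltwo}^2 \le \|f\|_{\elltwo}^2 = 2$ for $f(x) = \exp(\ii\pi\omega x)$; equivalently, in the orthonormal basis of Theorem~\ref{th:exact_solution}, $\sum_k|a_k|^2 + \sum_k|b_k|^2 \le 2$. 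In the range $n < \tfrac{1}{2} r(T)\omega$ of interest, Theorem~\ref{th:convergence} moreover guarantees $\|f-g_n\| = \BIGO(1)$, so this $L^2$-norm genuinely does not decay; the argument in fact delivers the bound for every $n$, but this is the range in which it carries content.

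Next I would transfer the bound on $\|g_n\|_{\elltwo}$ to one on $\|\alpha\|_{l^2}$ through the Hermitian positive-definite matrix $A$ of~\eqref{E:exactlinsys}, whose $(j,k)$ entry is $\langle\phi_k,\phi_j\rangle_{\elltwo}$ for the chosen exponential (or sine/cosine) basis $\{\phi_k\}_{k=1}^{2n+1}$ of $G_n$. Since $g_n = \sum_k\alpha_k\phi_k$, expanding the norm gives $\|g_n\|_{\elltwo}^2 = \alpha^{*}A\alpha \ge \lambda_{\min}(A)\,\|\alpha\|_{l^2}^2$, whence
\[
\|\alpha\|_{l^2} \le \lambda_{\min}(A)^{-1/2}\,\|g_n\|_{\elltwo} \le \sqrt{2}\,\lambda_{\min}(A)^{-1/2}.
\]
It then remains to bound $\lambda_{\min}(A)$ from below, and here I would write $\lambda_{\min}(A) = \lambda_{\max}(A)/\kappa(A)$ and invoke two facts. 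First, $\lambda_{\max}(A) \ge \max_k A_{kk} = \max_k\|\phi_k\|_{\elltwo}^2 \ge c_0$ for some absolute constant $c_0 > 0$, since the diagonal entries of $A$ are the squared $L^2_{[-1,1]}$-norms of the basis functions and at least one of these is bounded below independently of $n$ and $T$. Second, the estimate $\kappa(A) \le c_1(T)E(T)^{2n}$ recalled in \S\ref{ss:convergence_numerical} from~\cite{huybrechs2010fourier}. Combining, $\lambda_{\min}(A) \ge c_0/(c_1(T)E(T)^{2n})$, and substituting above yields $\|\alpha\|_{l^2} \le \sqrt{2c_1(T)/c_0}\,E(T)^n =: c(T)E(T)^n$, as claimed.

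I do not expect a real obstacle; the one place where care is needed is the choice of route. One must bound $\|\alpha\|_{l^2}$ by $\lambda_{\min}(A)^{-1/2}\|g_n\|_{\elltwo}$ rather than by the cruder $\|A^{-1}\|\,\|B\|_{l^2} = \lambda_{\min}(A)^{-1}\|B\|_{l^2}$, with $B$ the right-hand side of~\eqref{E:exactlinsys}: the latter only produces the factor $\kappa(A)\sim E(T)^{2n}$, whereas passing through the (cheaply bounded) $L^2$-norm of $g_n$ gains the square root $\kappa(A)^{1/2}\sim E(T)^n$ --- exactly the saving that distinguishes $\kappa(\tilde A)\sim E(T)^n$ from $\kappa(A)\sim E(T)^{2n}$ in \S\ref{ss:discrete}. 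Conceptually, the exponential size of $\alpha$ is the redundancy effect flagged just before the lemma: $g_n$ is essentially the nonclassical orthogonal-polynomial partial sum of Theorem~\ref{th:exact_solution}, whose basis polynomials grow like $E(T)^k$ off the interval $[c(T),1]$, and such functions cannot be built from the $[-T,T]$-bounded exponentials $\phi_k$ with small coefficients.
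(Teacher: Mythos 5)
Your proof is correct, but it takes a genuinely different route from the paper's. The paper argues on the extended interval: by Parseval, $\| \alpha \|_{l^2} = \| g_n \|_{L^2_{[-T,T]}}$, which it bounds by writing $g_n$ in the orthonormal polynomial basis of Theorem~\ref{th:exact_solution} and combining the growth $\| T^T_k \|_{L^\infty_{[-1,1]}}, \| U^T_k \|_{L^\infty_{[-1,1]}} \sim E(T)^k$ (Thm.\ 3.16 of \cite{huybrechs2010fourier}, extended to general $T$) with the $\BIGO(1)$ bounds on the coefficients $a_k, b_k$ established in \S\ref{ss:resolution_exact} for $k < \frac12 r(T)\omega$; summing the resulting geometric series gives $c(T)E(T)^n$. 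You instead stay on $[-1,1]$: the projection bound $\| g_n \|_{\elltwo} \leq \| f \|_{\elltwo} = \sqrt{2}$ plus the Rayleigh-quotient inequality $\| g_n \|^2_{\elltwo} \geq \lambda_{\min}(A) \| \alpha \|^2_{l^2}$, with $\lambda_{\min}(A) \geq \lambda_{\max}(A)/\kappa(A) \gtrsim E(T)^{-2n}$ deduced from the condition-number estimate quoted in \S\ref{ss:convergence_numerical}. Both arguments are sound; yours is shorter, needs no restriction to $n < \frac12 r(T)\omega$, and is uniform in $\omega$, but it imports $\kappa(A) \sim E(T)^{2n}$ as a black box (a fact this paper only cites, not proves), so the sharpness of your bound is exactly that of the conditioning estimate --- any algebraic factor hidden in the asymptotics of $\kappa(A)$ would propagate into your constant --- whereas the paper's proof bypasses $\kappa(A)$ entirely and ties the coefficient growth directly to the mechanism the surrounding discussion wants to expose, namely the $E(T)^k$ growth of the nonclassical orthogonal polynomials away from $[c(T),1]$. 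Since that growth is also the ultimate source of the conditioning estimate you invoke, the two proofs rest, at bottom, on the same underlying fact, reached through different intermediaries.
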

\begin{proof}
By Parseval's relation, $\|  \alpha \|_{l^2} = \| g_n \|_{L^2_{[-T,T]}}$.  Writing $g_n$ as in Theorem \ref{th:exact_solution}, we obtain
\begin{equation*}
\|  \alpha \|_{l^2} = \| g_n \|_{L^2_{[-T,T]}} \leq\sum^{n}_{k=0} |a_k| \| T^{T}_{k} \|_{L^{\infty}_{[-1,1]}} +  \sum^{n-1}_{k=0} |b_k| \| U^{T}_{k} \|_{L^{\infty}_{[-1,1]}},
\end{equation*}
where $a = (a_0,\ldots,a_n)^{\top}$, $b = (b_0,\ldots,b_{n-1})^{\top}$ and $a_k$ and $b_k$ are given by~\eqref{E:ak_x} and~\eqref{E:bk_x} respectively.  It can be shown that
\begin{equation*}
\| T^{T}_{k} \|_{L^{\infty}_{[-1,1]}} , \| U^{T}_{k} \|_{L^{\infty}_{[-1,1]}} \sim E(T)^k,\quad k \rightarrow \infty,
\end{equation*}
(see \cite[Thm. 3.16]{huybrechs2010fourier} for the case $T=2$ -- the extension to general $T$ is straightforward), and therefore
\begin{equation*}
\|  \alpha \|_{l^2} \leq c \left ( \sum^{n}_{k=0} |a_k| E(T)^k + \sum^{n-1}_{k=0} | b_k | E(T)^k \right ),
\end{equation*}
for some $c>0$ independent of $n$ and $\omega$.  By the analysis of the previous section, $|a_k|, |b_k| \sim 1$ for $k < \frac{1}{2} r(T) \omega$, and hence we deduce the result.
\end{proof}
This lemma indicates that the coefficient vector of the continuous Fourier extension may well be exponentially large in the unresolved regime.  As discussed (see \S \ref{ss:convergence_numerical}), the numerical solution favours representations in the frame with bounded coefficients, since for large $n$ the system becomes underdetermined and most least squares solvers will seek a solution vector with minimal norm.  We therefore conclude that, whilst the resolution power of the frame is bounded by $\pi$, the resolution power of all representations in the frame with `reasonably small' coefficients is only $2T$. 

In Fig.~\ref{fig:num_thy} we illustrate this discrepancy by comparing the `theoretical' Fourier extension and its counterpart computed in standard precision.  In this example, $T=4$, i.e. $E(T) \approx 25$, and we therefore witness rapid exponential growth of the theoretical coefficient vector, in agreement with Lemma \ref{l:coeff_growth}.  In particular, the coefficient vector is of magnitude $10^{55}$ at the point at which the function $f$ begins to be resolved.  On the other hand, since for large $n$ the numerical solver favours small norm coefficient vectors, the computed coefficient vector initially grows exponentially and then levels off around $\| a \| \approx 10^{15}$.  Thus, in finite precision one cannot obtain the coefficient vector required to resolve $f$ at the point $n = \frac{1}{2} r(T) \omega$.  As commented previously, these arguments can be made rigorous by performing an analysis of the numerical Fourier extension \cite{BADHJMVFEStability}.

\begin{figure}[t]
\begin{center}
 \subfigure[Numerical extension]{
    \includegraphics[width=5cm]{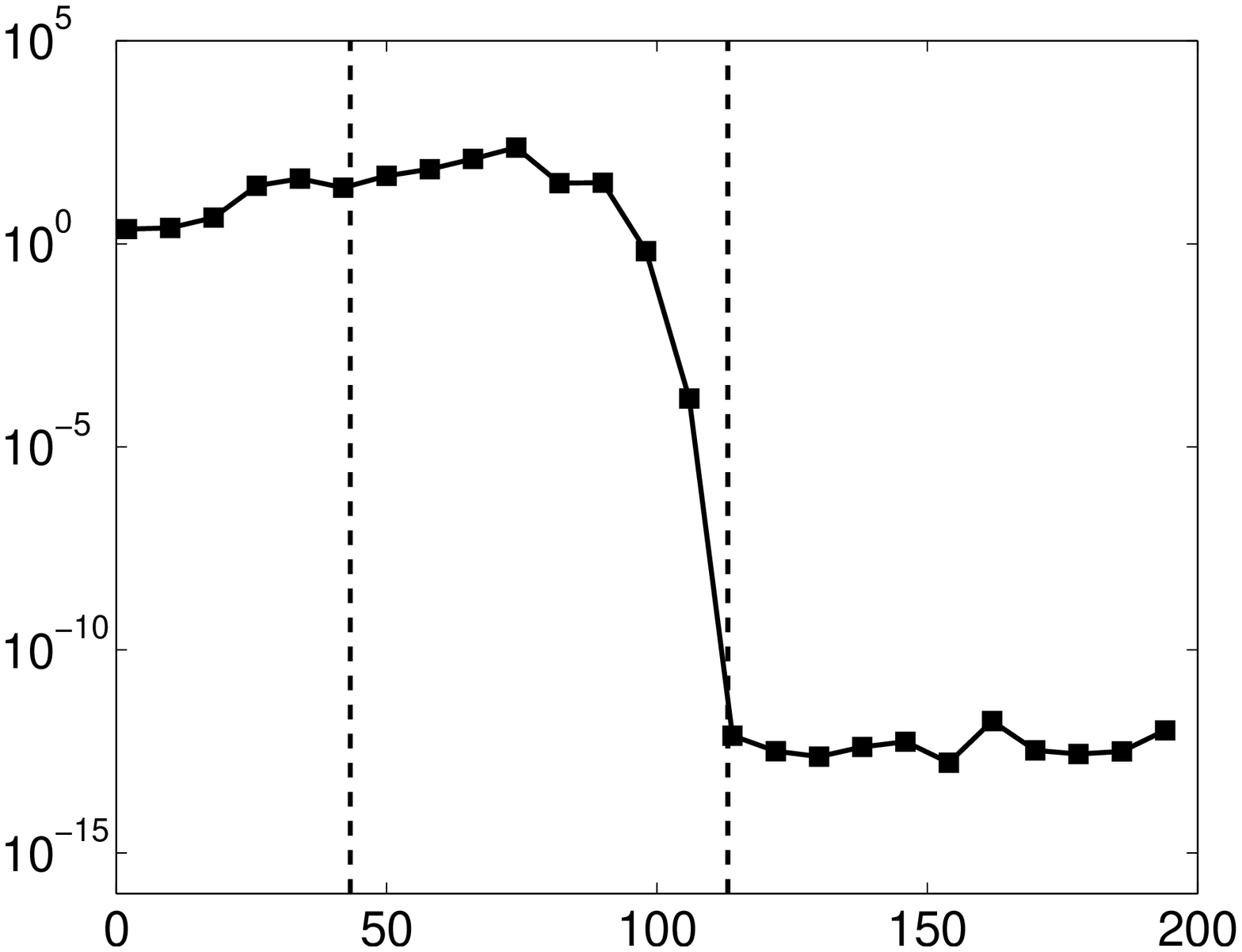}
  }
  \subfigure[Theoretical extension]{
    \includegraphics[width=5cm]{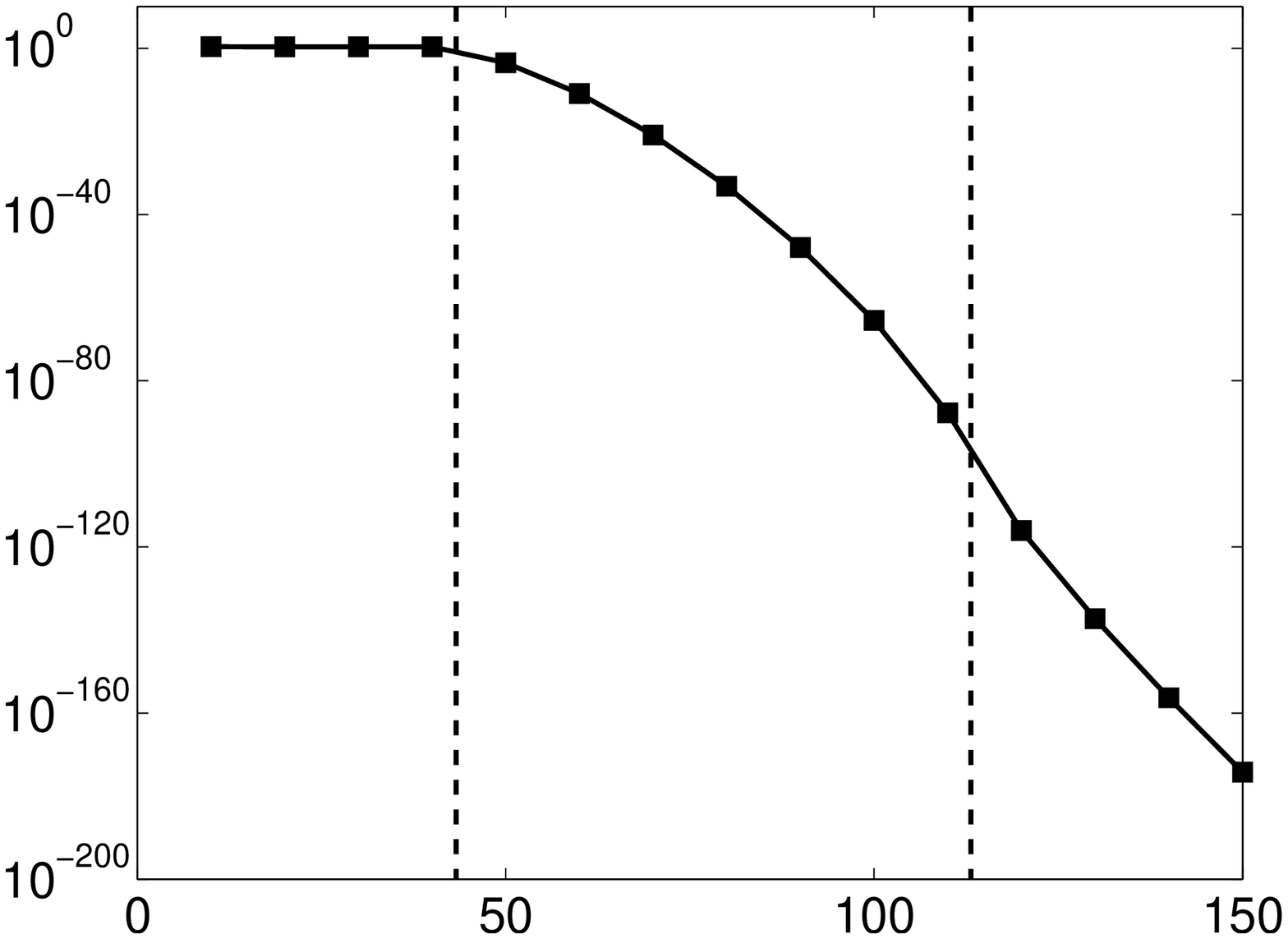}
  }
\\
  \subfigure[Numerical extension]{
    \includegraphics[width=5cm]{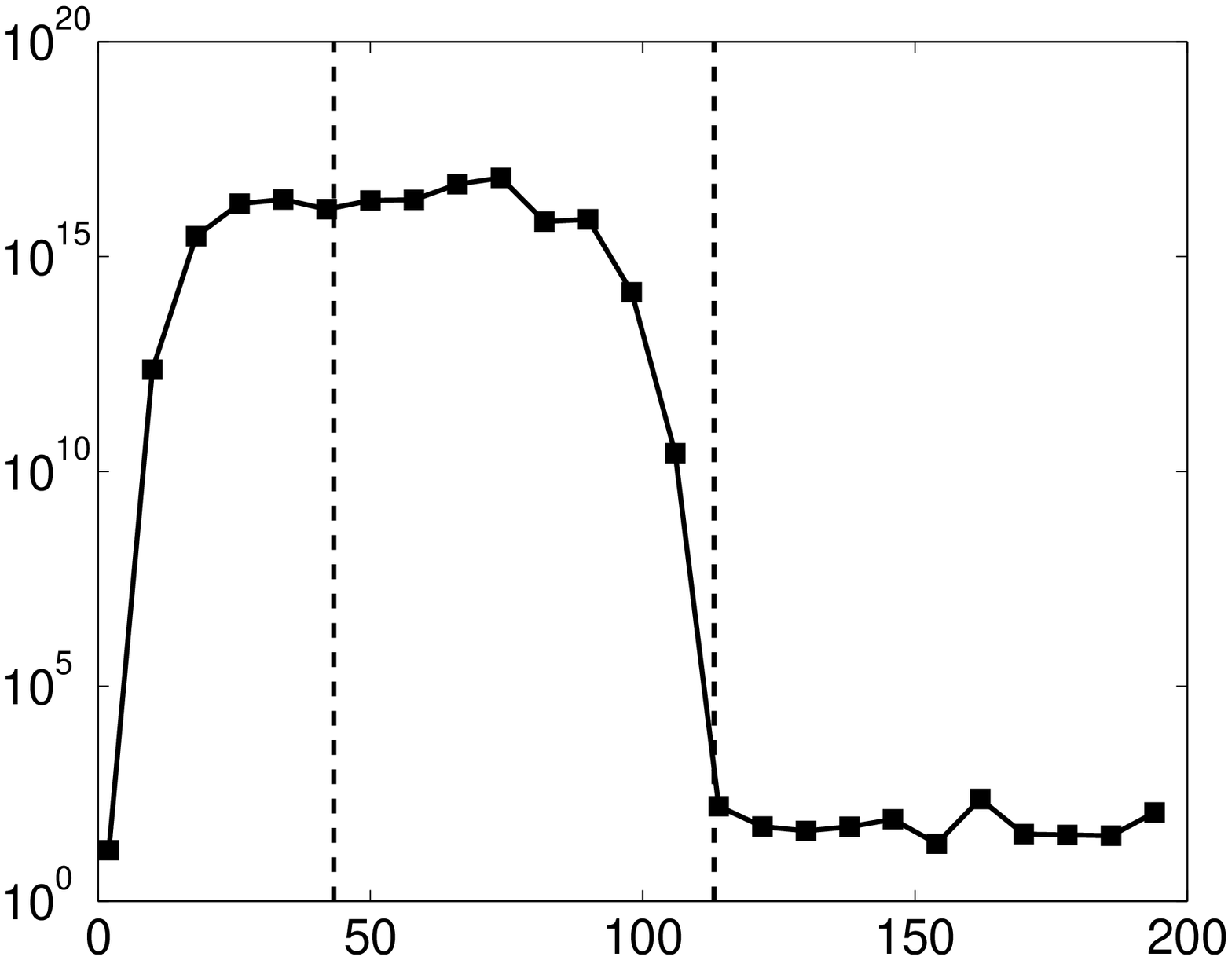}
  }
  \subfigure[Theoretical extension]{
    \includegraphics[width=5cm]{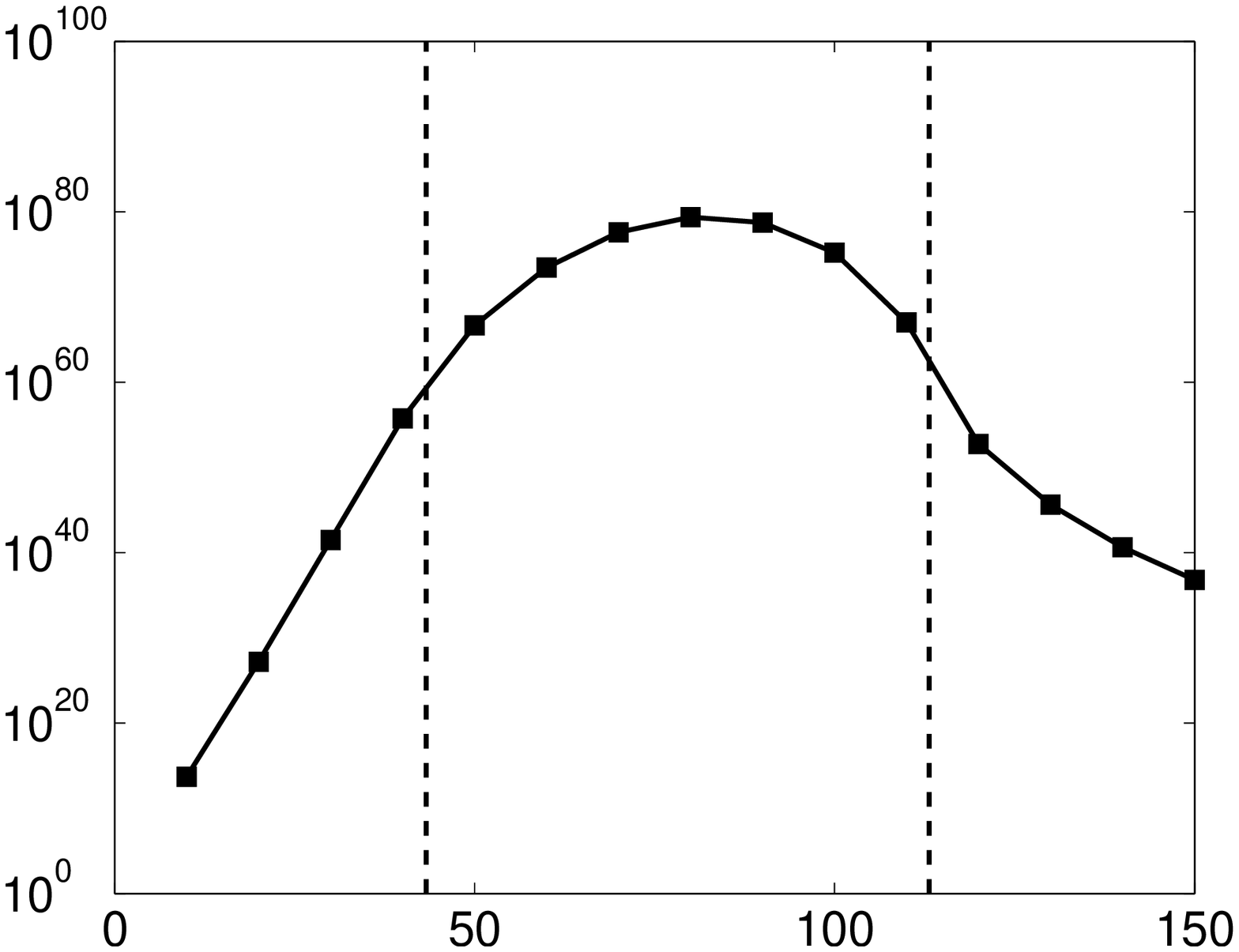}
  }
 \caption{The top row shows the error $\| f - g_n \|_{L^{\infty}_{[-1,1]}}$ where $f(x) = \exp(i \omega \pi x)$, $\omega =20 \sqrt{2}$, and $T=4$.  The bottom row gives the norm of the coefficient vector. The numerical extension was computed in Matlab, the theoretical extension was computed in Mathematica using higher precision arithmetic. The dotted lines indicate the values $n=\frac12 r(T) \omega$ and $n=\omega T$.}\label{fig:num_thy}
\end{center}
\end{figure}

\subsection{Fixed $T$ versus varying $T$}
Whilst the principal issue highlighted in the previous section, namely, that for large $T$ we witness $r(T) \approx 2 T$ rather than $r(T) \approx \pi$ in practice, is unfortunate, it is mainly the case $T \approx 1$ that is of interest, since this gives the highest resolution power.

However, with $T$ fixed independently of $n$, the resolution constant $r(T) > 2$; in other words, greater than the optimal Fourier resolution constant $r=2$ (although this difference can be made arbitrarily small).  One way to formally obtain the value of $2$ is to allow $T \rightarrow 1^+$ with increasing $n$.  For example, if
\begin{equation}\label{T_formal}
T = 1 + \frac{c}{n^\alpha},
\end{equation}
for some $c>0$ and $0 < \alpha < 1$, then we find that
\[
r(T) = 2 + \frac{2 c}{n^\alpha} + \BIGO(n^{-2 \alpha}) \rightarrow 1,\quad n \rightarrow \infty,
\]
thereby giving formally optimal resolution power.  The disadvantage of such a choice of $T$ is that one forfeits exponential convergence.  In fact,
\[
E\left(1 + \frac{c}{n^\alpha} \right) = 1 + \frac{c \pi}{n^a} + \BIGO(n^{-2 \alpha}),
\]
and therefore
\begin{equation}\label{E:varying_T_decay}
E\left(1 + \frac{c}{n^\alpha} \right)^{-n} \sim \exp(-c \pi  n^{1-\alpha} ),\quad n \rightarrow \infty,
\end{equation}
which indicates subexponential decay of the error (yet still spectral).  Larger values of $\alpha$ lead to slow, algebraic convergence, and are consequently inadvisable.

An alternative way to choose $T$ is found in the literature on the Kosloff--Tal--Ezer mapping \cite{boyd,KTEmapped} (recall the discussion in \S \ref{ss:relation}).  In \cite{KTEmapped} the authors suggest choosing the mapping parameter based on some tolerance $\epsilon_{\mathrm{tol}}$.  This limits the best achievable accuracy to $\mathcal{O}(\epsilon_{\mathrm{tol}})$ but gives a formally optimal time-step restriction. We may do the same with the Fourier extension, by solving the equation
\[
E(T)^{-n} = \epsilon_{\mathrm{tol}},
\]
This gives
\begin{equation}\label{eq:optimal_T}
T = T(n,\epsilon_{\mathrm{tol}}) = \frac{\pi}{4} \left ( \arctan \left ( \epsilon_{\mathrm{tol}} \right )^{\frac{1}{2n}} \right )^{-1}.
\end{equation}
Note that
\[
T(n,\epsilon_{\mathrm{tol}}) \sim 1 - \frac{\log (\epsilon_{\mathrm{tol}})}{\pi n} + \mathcal{O}(n^{-2}),\quad n \rightarrow \infty,
\]
and therefore
\[
r(T(n,\epsilon_{\mathrm{tol}})) \sim 2 - \frac{2\log (\epsilon_{\mathrm{tol}})}{\pi n} + \mathcal{O}(n^{-2}),\quad n \rightarrow \infty.
\]
Hence we expect formally optimal result power with this approach, as well as a best achievable accuracy on the order of $\epsilon_{\mathrm{tol}}$. 

As we show in the next section, choosing $T$ in this manner works particularly well in practice.

\subsection{Numerical experiments}\label{s:numexp}

In this final section we present numerical results for the Fourier extension applied to the oscillatory functions
\begin{align}
\label{E:f12def}
f_1(x) = (1+x^2) \cos 10 x \cos 100 \pi x,\quad f_2(x) = \mbox{Ai}(-36x-32),
\end{align}
where $\mbox{Ai}(z)$ is the Airy function.  Graphs of these functions are given in the top row of Fig.~\ref{fig:num_exp}.

\begin{figure}[t]
\begin{center}
 \subfigure[The function $f_1$]{
    \includegraphics[width=5cm]{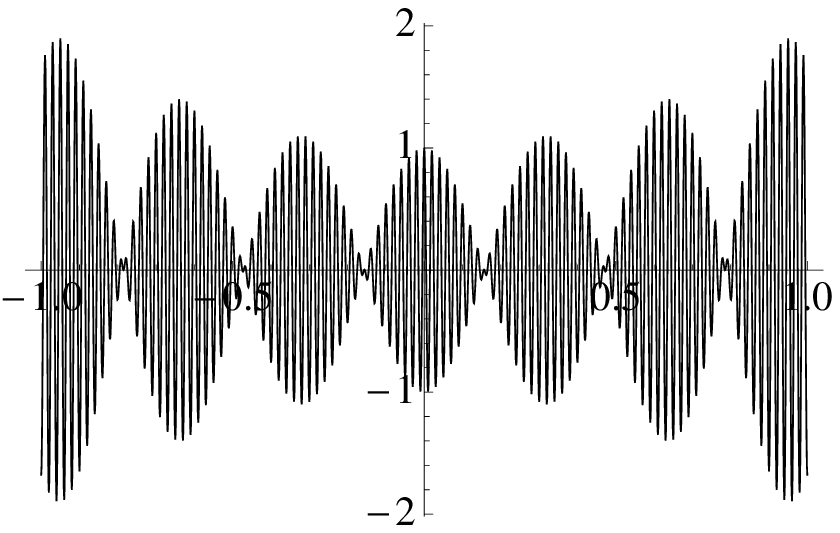}
  }
  \subfigure[The function $f_2$]{
    \includegraphics[width=5cm]{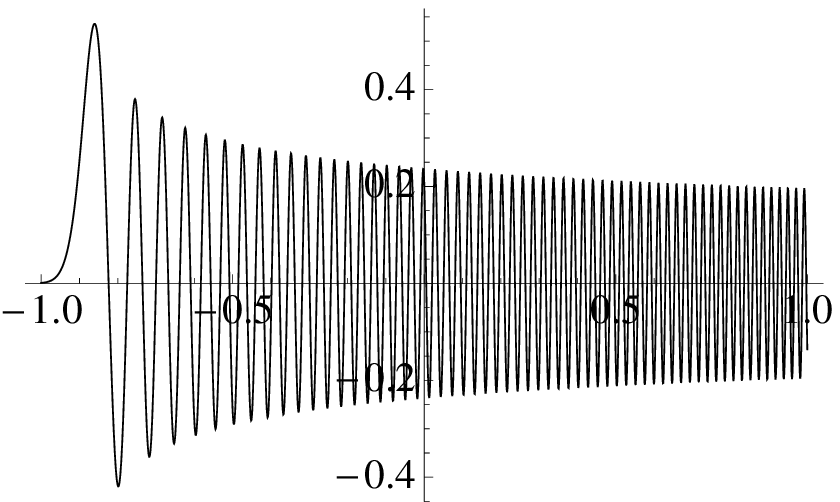}
  }
\\
  \subfigure[Fourier extension of $f_1$]{
    \includegraphics[width=5cm]{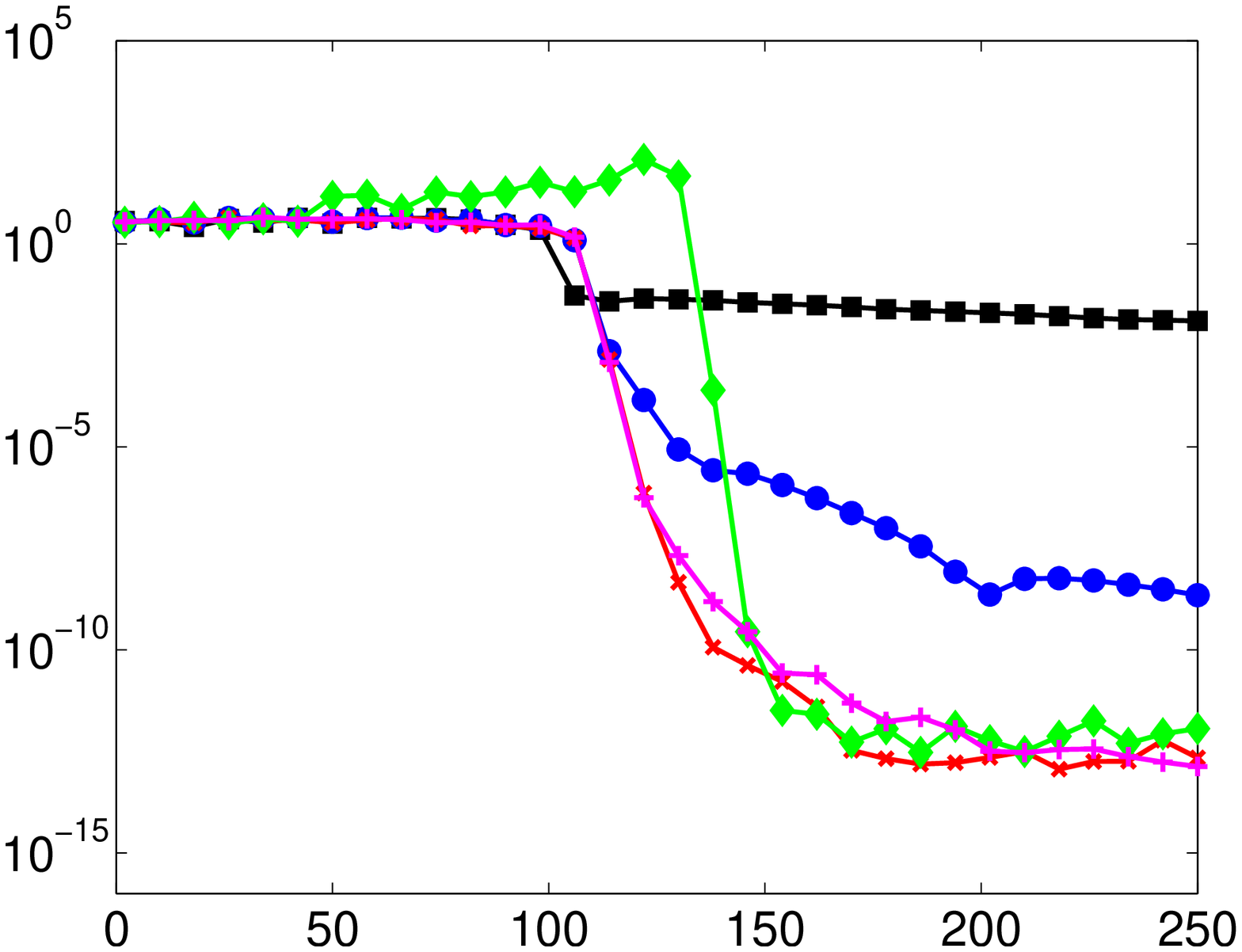}
  }
  \subfigure[Fourier extension of $f_2$]{
    \includegraphics[width=5cm]{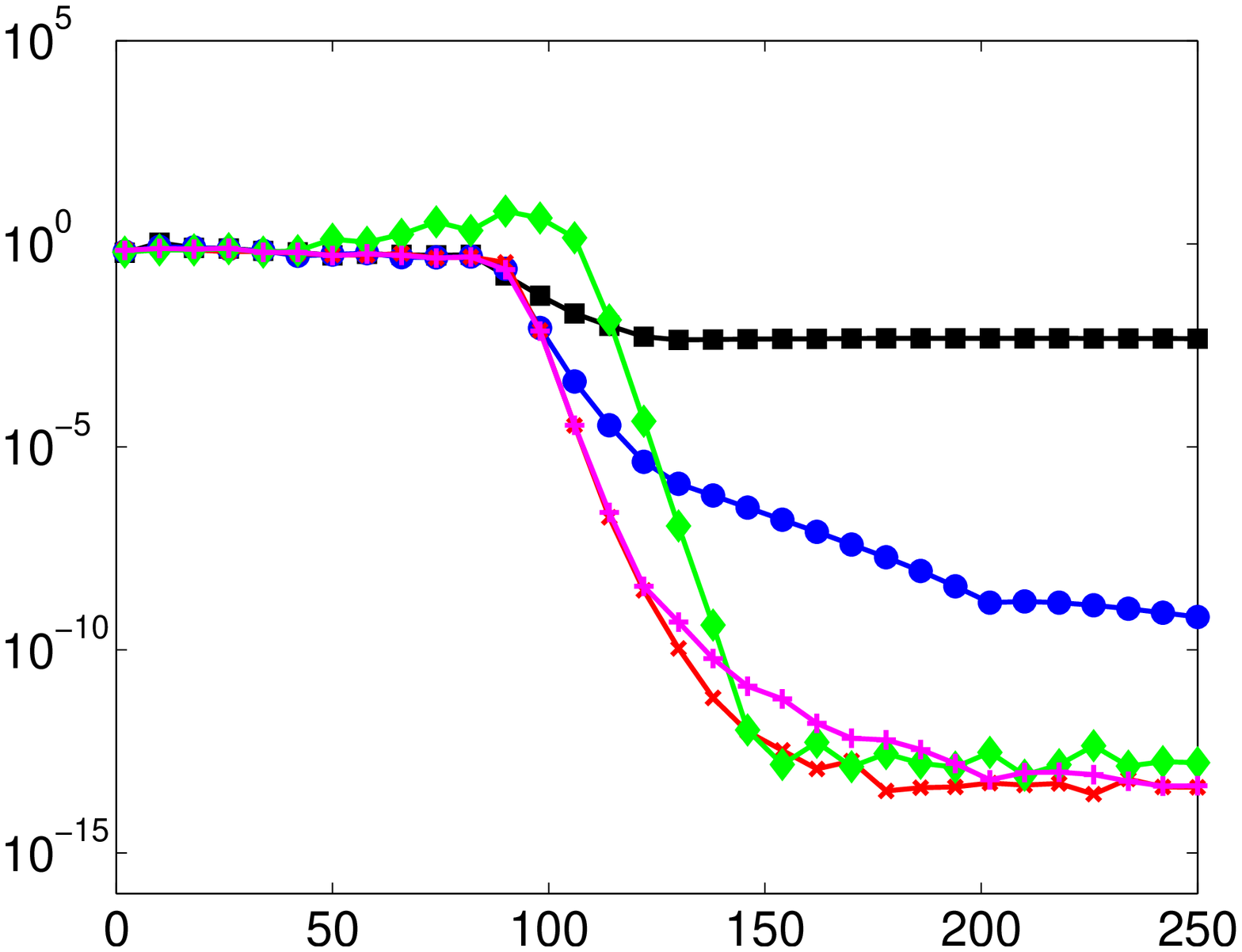}
  }
 \caption{The top row shows the functions $f_1$ and $f_2$ defined by~\eqref{E:f12def}.  The bottom row gives the error $\| f - g_n \|_{L^{\infty}_{[-1,1]}}$, where $T=1+\frac{1}{n}$ (squares), $T=1+n^{-\frac{2}{3}}$ (circles), $T=1+n^{-\frac{1}{2}}$ (crosses), $T=\frac{4}{3}$ (diamonds) and a formally optimal $T$ given by \eqref{eq:optimal_T} with $\epsilon_{\mathrm{tol}}=$1e-13 (pluses).}\label{fig:num_exp}
\end{center}
\end{figure}

In the bottom row of Fig.~\ref{fig:num_exp} we present the error committed by the discrete Fourier extension for various choices of $T$.  The convergence rates predicted in the previous section are confirmed for these examples.  For $T$ decreasing with $n$ the oscillations are resolved slightly sooner, but there is slower convergence in the resolved regime. Whether the approximation error is better or worse than that obtained from a fixed value of $T$ (in this case $T=\frac{4}{3}$) depends on what level of accuracy is desired. Yet, even when high accuracy is required, $\alpha=1/2$ (i.e. $T=1+\frac{1}{n^{1/2}}$) appears to be a good choice for these two examples, in spite of the theoretical subexponential convergence rate. The varying value of $T = T(n;\epsilon_{\mathrm{tol}})$, with tolerance set to $\epsilon_{\mathrm{tol}}=$1e-13, also yields comparable results. The convergence rate for the case of larger $\alpha=2/3$ is slower, as expected from the estimate~\eqref{E:varying_T_decay}.

As discussed, one motivation for using Fourier extensions is that, as rigorously proved in this paper, they offer a higher resolution power for small $T$ than polynomial approximations, for which the resolution constant is $\pi$ (see \S \ref{s:OPSresolution}).  In Fig.~\ref{fig:num_comp} we compare the behaviour of the Chebyshev expansion and the Fourier extension approximation for the functions~\eqref{E:f12def}.  Note that the latter resolves the oscillatory behaviour using fewer degrees of freedom, in agreement with the result of \S \ref{s:resolution}.  Indeed, for the first example function $f_1$ shown in the left panel, the Chebyshev expansion only begins to converge once $n$ exceeds $150$ (here, for purposes of comparison, the number of expansion coefficients is $2n$), in agreement with a resolution constant of $\pi$.

The results for the second function $f_2$ are shown in the right panel of Fig.~\ref{fig:num_comp}. The differences in resolution are smaller in this case. Yet, the fixed choice $T=8/7$ and varying choices of $T$ still require significantly fewer degrees of freedom than the Chebyshev expansion to resolve the oscillatory behaviour.  As before, a slower asymptotic convergence rate is observed for the case.
Note that the oscillations of $f_2$ are not harmonic, with the frequency increasing towards the right endpoint of the approximation interval. This seems to have the effect of reducing the advantage of schemes with optimized resolution properties.

\begin{figure}[t]
\begin{center}
  \subfigure[Approximation of $f_1$]{
    \includegraphics[width=5cm]{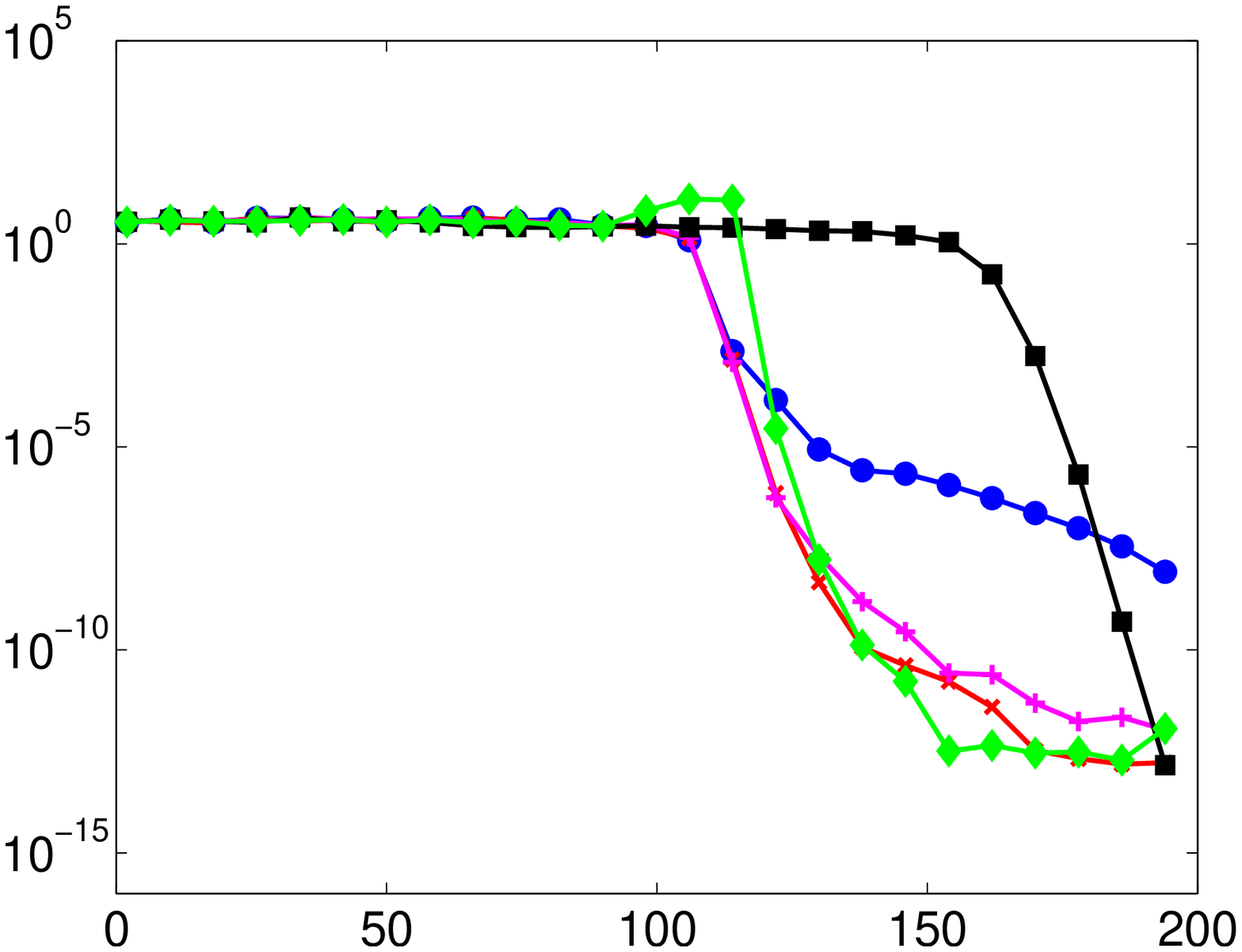}
  }
  \subfigure[Approximation of $f_2$]{
    \includegraphics[width=5cm]{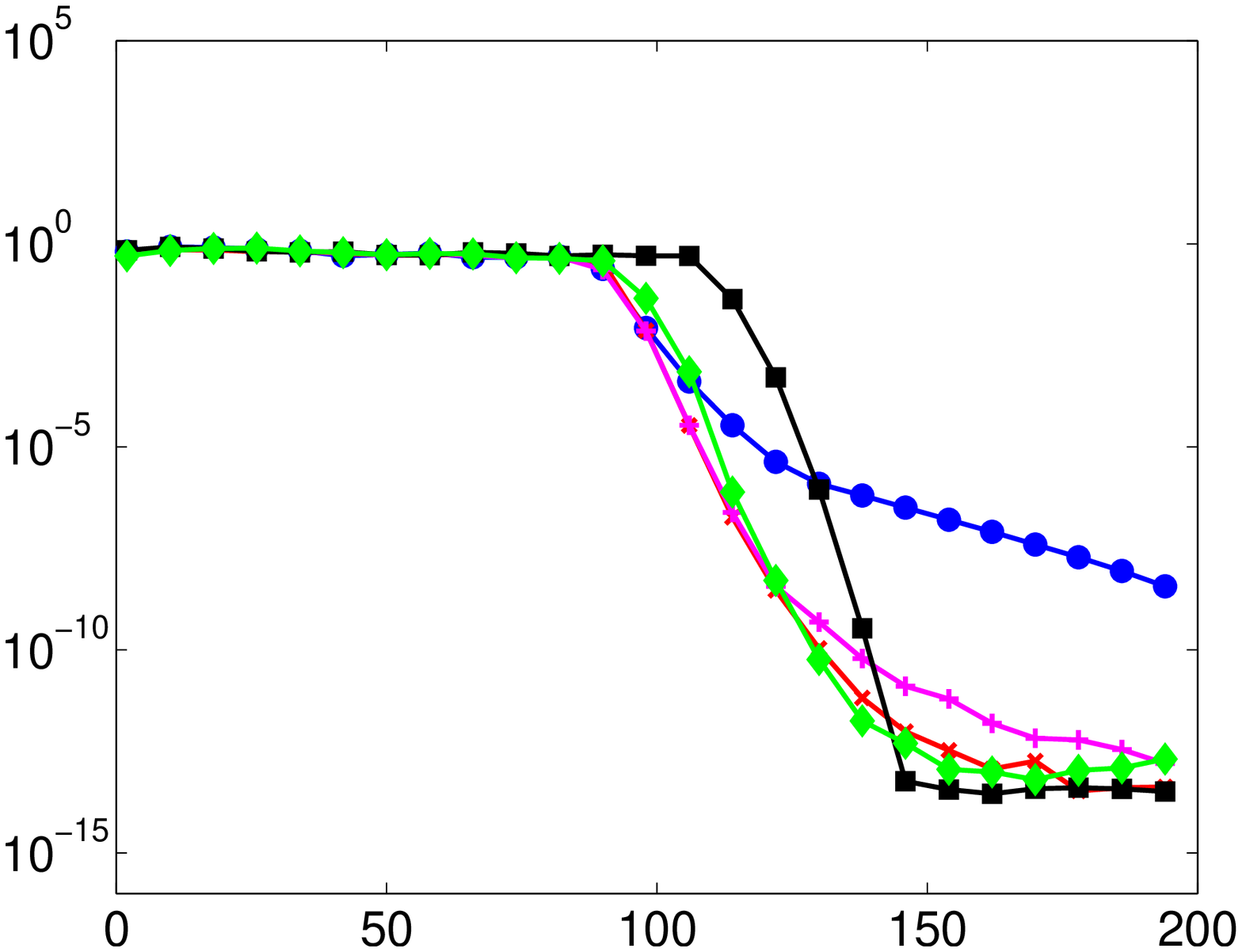}
  }
 \caption{The error $\| f - g_n \|_{L^{\infty}_{[-1,1]}}$, where $g_n$ is Chebyshev expansion (squares), or the Fourier extension approximation with $T=1+n^{-2/3}$  (circles), $T=1+n^{-1/2}$ (crosses), $T=\frac{8}{7}$ (diamonds) and a formally optimal $T$ given by \eqref{eq:optimal_T} with $\epsilon_{\mathrm{tol}}=$1e-13 (pluses).}\label{fig:num_comp}
\end{center}
\end{figure}

\section{Conclusions and challenges} \label{s:conclusions}
The purpose of this paper was to describe and analyze the observation that Fourier extensions are eminently well suited for resolving oscillations.  To do this, we derived an exact expression for the so-called resolution constant of the continuous/discrete Fourier extension, and offered an explanation as to why this constant need not be realized in the case of large $T$.

Although we have touched upon the subject of numerical behaviour of Fourier extensions in this paper, we have not provided a full description.  As mentioned, the continuous/discrete Fourier extensions both result in severe ill-conditioning.  However, numerical examples in this paper and elsewhere (see \cite{BoydFourCont,brunoFEP,LyonFESVD,LyonFast}) point towards an apparent contradiction.  Namely, despite this ill-conditioning, the best achievable accuracy can still be very high (close to machine epsilon for the discrete extension).  It transpires that this effect can be quite comprehensively explained, with the main conclusion being the following: the discrete Fourier extension, although the result of solving an ill-conditioned linear system, is in fact numerically stable.  A paper on this topic is in preparation \cite{BADHJMVFEStability}.  Incidentally, the analysis presented therein also allows one to make rigorous the arguments given in \S \ref{ss:resolution_numerical} on the differing resolution constants of 
theoretical and numerical Fourier extensions for large $T$.

The discrete Fourier extension introduced in this paper uses values of $f$ on a particular nonequispaced mesh to compute the Fourier extension.  As mentioned, one important use of Fourier extensions is to solve PDE's in complex geometries.  In this setting, such meshes are usually infeasible.  For this reason, it is of interest to consider the question of how to compute rapidly convergent, numerically stable Fourier extensions with good resolution power from function values on equispaced meshes  (as discussed, related methods based on the FE--Gram extension are developed in \cite{albin2011,bruno2010high,lyon2010high}).  It transpires that this can be done, and we will report the details in the upcoming paper  \cite{BADHJMVFEStability} (see also \cite{LyonFESVD}).

Finally, we remark that there has been little discussion in this paper on the computational cost of computing Fourier extensions.  Instead, we have focused on approximation-theoretic properties, such as resolution.  However, M. Lyon has recently introduced a fast algorithm for this purpose \cite{LyonFast}, which allows for computation of Fourier extensions in $\BIGO (n (\log n)^2)$ operations.

\section*{Acknowledgements}
The authors would like to thank Rodrigo Platte for pointing out the relation to the Kosloff--Tal--Ezer mapping.  They would also like to thank Jean--Fran\c{c}ois Maitre for useful discussions and comments.

\bibliographystyle{abbrv}
\bibliography{resolution}

\end{document}